\newcommand{\simplestyle}[1]{#1}
\newcommand{\coapstyle}[1]{}
\newcommand{\algmargin}{\the\ALG@thistlm}
\newcommand{\B}{\mathcal{B}}
\newcommand{\real}{{\mathbb{R}}}
\newcommand{\R}{\real}
\newcommand{\st}{\text{s.t.}\: }
\newcommand{\J}{{\mathcal{J}}}
\newcommand{\I}{{\mathcal{I}}}
\newcommand{\X}{{\mathcal{X}}}
\newcommand{\hbeta}[1]{h^{\beta}_#1}
\newcommand{\hbetak}[1]{h^{\beta_{k}}_#1}
\renewcommand{\S}{{\mathcal{S}}}
\DeclareMathOperator*{\argmin}{argmin}
\newcommand{\ourint}[2]{|(#1, #2)|}
\newcommand{\sourint}[2]{(#1, #2)}
\theoremstyle{thmstyleone}
\newtheorem{lemma}{Lemma}
\newtheorem{definition}{Definition}
\newtheorem{theorem}{Theorem}
\newtheorem{corollary}{Corollary}
\newtheorem{proposition}{Proposition}
\newtheorem{assumption}{Assumption}
\algnewcommand{\parState}[1]{\State \parbox[t]{\dimexpr\linewidth-\algmargin}
{\strut #1}}
\begin{document}

\title{
    Parallel block coordinate descent methods with identification
    strategies
    \thanks{Partially supported by FAPESP grants 2023/08706-1, 2018/24293-0, 2014/14228-6, 2013/07375-0, and CNPq grants 312394/2023-3, 305010/2020-4.
    }
}

\coapstyle{
\author[1]{\fnm{Ronaldo} \sur{Lopes}}\email{ronaldolps3@gmail.com}
\equalcont{These authors contributed equally to this work.}
\author[2]{\fnm{Sandra A.} \sur{Santos}}\email{sandra@ime.unicamp.br}
\equalcont{These authors contributed equally to this work.}
\author*[2]{\fnm{Paulo J. S.} \sur{Silva}}\email{pjssilva@unicamp.br}
\equalcont{These authors contributed equally to this work.}

\affil*[1]{\orgdiv{Centro de Ci\^encias Exatas (CCE)}, \orgname{Universidade Estadual de Maring\'a (UEM)}, \orgaddress{\street{Av. Colombo, 5790 - Zona 7}, \city{Maring\'a}, \postcode{87020-900}, \state{Paran\'a}, \country{Brazil}}}

\affil*[2]{\orgdiv{Instituto de Matem\'atica, Estat\'{\i}stica e Computa\c{c}\~ao Cient\'{\i}fica (IMECC)}, \orgname{Universidade Estadual de Campinas (UNICAMP)}, \orgaddress{\street{Rua S\'ergio Buarque de Holanda, 651}, \city{Campinas}, \postcode{13083-859}, \state{S\~ao Paulo}, \country{Brazil}}}
}

\simplestyle{
\author[1]{Ronaldo Lopes}
\author[2]{Sandra A. Santos}
\author[2]{Paulo J. S. Silva}
\affil[1]{Universidade Estadual de Maring\'a (UEM),
Centro de Ci\^encias Exatas (CCE),
Av. Colombo, 5790 - Zona 7,
87020-900, Maring\'a, Paran\'a, Brazil
Email: {\tt ronaldolps3@gmail.com}}
\affil[2]{Universidade Estadual de Campinas (Unicamp),
Instituto de Matem\'atica, Estat\'{\i}stica e Computa\c{c}\~ao Cient\'{\i}fica (IMECC),
Rua Sergio Buarque de Holanda, 651,
13083-859, Campinas, S\~ao Paulo, Brazil
Emails: {\tt  sandra@ime.unicamp.br,
pjssilva@ime.unicamp.br}}
\renewcommand\Affilfont{\itshape\small}
}

\date{July 29th, 2025}

\coapstyle{
    \abstract{
        \change{
            This work presents a parallel variant of the algorithm introduced in
                [\emph{Acceleration of block coordinate descent methods with identification
                        strategies}, Comput. Optim. Appl. 72(3):609--640, 2019] to minimize the sum
            of a partially separable smooth convex function and a possibly non-smooth
            block-separable convex function under simple constraints. It achieves better
            efficiency by using a strategy to identify the nonzero coordinates that
            allows the computational effort to be focused on using a nonuniform probability
            distribution in the selection of the blocks. Parallelization is achieved
            by extending the theoretical results from Richt{\'a}rik and Tak{\'a}{\v{c}}
                [\emph{Parallel coordinate descent methods for big data optimization}, Math.
                    Prog. Ser. A 156:433--484, 2016]. We present convergence results and
            comparative numerical experiments on regularized regression problems
            using both synthetic and real data.
        }
    }
}

\coapstyle{
}

\maketitle

\simplestyle{
    \begin{abstract}
        \change{
            This work presents a parallel variant of the algorithm introduced in
                [\emph{Acceleration of block coordinate descent methods with identification
                        strategies}, Comput. Optim. Appl. 72(3):609--640, 2019] to minimize the sum
            of a partially separable smooth convex function and a possibly non-smooth
            block-separable convex function under simple constraints. It achieves better
            efficiency by using a strategy to identify the nonzero coordinates that
            allows the computational effort to be focused on using a nonuniform probability
            distribution in the selection of the blocks. Parallelization is achieved
            by extending the theoretical results from Richt{\'a}rik and Tak{\'a}{\v{c}}
                [\emph{Parallel coordinate descent methods for big data optimization}, Math.
                    Prog. Ser. A 156:433--484, 2016]. We present convergence results and
            comparative numerical experiments on regularized regression problems
            using both synthetic and real data.
        }

        \noindent
        \textbf{Key words:} Block coordinate descent, active-set identification,
        large-scale optimization, parallel computation and $\ell_1$ regularization.
        \medskip

        \noindent
        \textbf{AMS Classification:} 60K05, 65Y05, 49M37, 90C30, 90C06, 90C25
    \end{abstract}
}

\selectlanguage{english}

\section{Introduction}
\label{sec:int}

The contemporary need to address problems with huge data sets has renewed interest in simple first-order \change{optimization algorithms} such as coordinate descent methods, \change{particularly when} problem structure can be further explored~\cite{BT2013,N2012,rt14}. Another room for improvement is \change{the use of parallelism~\cite{CFKS2020,LW2015,NC2016,RT2016,TTR2018,TSV2023}, where decomposition strategies, communication, and distributed memory architectures are crucial~\cite{CLY2022,RT2016b,TRB2015,DFDM2019}.}

Previously\change{, \cite{LSS19} proposed an acceleration of block coordinate descent methods (BCDM\change{s}) using identification strategies. The main idea was to use a nonuniform probability distribution to select free blocks that are more likely to induce a larger reduction of the objective function at each iteration. The resulting method was called Active BCDM. This approach was combined with} an extra second-order step \change{in the subspace of the free \change{blocks}, using a \change{blocked Hessian, reducing} the running time required to solve $\ell_1$-regularized problems.} In \cite{NLS2022}, the authors develop an exhaustive study concerning algorithmic choices that influence the efficiency of BCDMs, \change{namely the block partitioning strategy, the block selection rule, and the block update rule.} In the current work, we investigate how \change{identification strategies can be used in parallel variants of BCDM.}

\change{A naive parallel BCDM implementation suffers from several challenges, notably the need to predefine subgroups of blocks to be updated in parallel. This would enable the precomputation of each subgroup's Lipschitz constants of the gradient. Without this precomputation, the descent directions and the Lipschitz constants, or line searches, would need to be computed for each new selection, which would limit computational performance.} 

\change{These issues were addressed by the theoretical results in~\cite{RT2016}. In this paper, the authors introduced the concept of partial separability, which relaxes the notion of function separability. Then, they developed a framework where the blocks can be updated in parallel using the original Lipschitz continuity constants of the gradient of each block. Hence, the blocks updated in parallel are no longer fixed in advance. The method also avoids extra function evaluations to enforce a descent criterion. Instead, it penalizes the curvature of the model used to compute the descent directions, leading to improved efficiency.}

\change{However, the results in~\cite{RT2016} assume that the blocks are selected using a uniform distribution, limiting the direct application of the identification strategy described in~\cite{LSS19}. The main theoretical contribution of the present text is to overcome this limitation and allow nonuniform distributions to be employed. This opens up the path to incorporate identification techniques in parallel block coordinate descent methods. We then present a convergence theory for our method, named Parallel Active BCDM (Algorithm~\ref{alg:ActivePCDM}) and compare its efficiency with the (uniform) parallel block coordinate descent method, \texttt{PCDM}. The numerical comparison uses randomly generated artificial tests and a library of real-world problems~\cite[Tables 2 and 3]{LSS19}. The numerical experiments unveil essential differences in the behavior of parallel block coordinate descent methods depending on the relation between dimensions of the data matrix when applied to regression problems with regularization (Lasso)~\cite{TB96}}.

\change{The remainder of this text is organized as follows:} Section~\ref{sec:background} contains the problem \change{statement} and \change{details} the main ingredients of Active BCDM, whose \change{(sequential) version is detailed} in Section~\ref{sec:BCDM}. \change{Then, its} parallel version \change{is introduced} in Section~\ref{sec:PCDM}, \change{along with the associated}  convergence results. \change{Finally,} computational experiments are presented and analyzed in Section~\ref{sec:numerical}. Our conclusions are stated in Section~\ref{sec:remarks}, \change{along with} lines for future research.
 \selectlanguage{english}

\section{Background}
\label{sec:background}

We \change{consider} the problem
\begin{align} \label{eq:mainprob}
  \min_{x \in \R^n} &\quad  f(x) + \psi(x) \\
  \st  &\quad  l\le x \le u, \notag
\end{align}
where $f:\R^n\rightarrow \R$ and $\psi:\R^n\rightarrow \R$ are both convex,
with $\psi$ possibly nonsmooth. In addition, $\psi$ is \change{assumed}
to have a block-separable structure given by
\begin{equation} \label{eq:psi}
  \psi(x) = \sum\limits_{i=1}^{m}\psi_i(x_{(i)}),
\end{equation}
where $x_{(i)}\in \R^{p_i}$ is a subset of coordinates of the vector $x \in
\R^n$ with $\sum_{i=1}^{m}p_i=n$. The vectors $l$ and $u$ have $n$
coordinates in $[-\infty, +\infty]$, with components $l_j < u_j$, $j=1,
\ldots, n$. Infinite values denote the absence of a bound.

Based on the block decomposition of the vector $x$ in $m$ coordinate subsets
described in $\eqref{eq:psi}$, we define a set of matrices $ U_i\in
\R^{n\times p_i}$ whose columns are canonical vectors of $\R^n$, and such that
\begin{equation*}
  x = \sum\limits_{i=1}^{m}U_ix_{(i)} \quad \text{and} \quad x_{(i)} = U_i^T x.
\end{equation*}

Let $B_i\in\mathbb{S}^{p_i}_{++}$, $i=1, \ldots, m$ be a set of positive
definite matrices \change{of} order $p_i$. We then define the respective 
primal and dual norms in $\R^{p_i}$ by
\begin{equation*}
  \|x_{(i)}\|_{(i)}:=\sqrt{x_{(i)}^{T}B_ix_{(i)}} \ \mbox{and} \
  \|x_{(i)}\|_{(i)}^{*}:=\sqrt{x_{(i)}^{T}B_i^{-1}x_{(i)}}, \ \forall i \in
  \{1,\dots,m\}.
\end{equation*}
Such matrices may be used to provide scaling information for the problem
variables, particularly if they are diagonal. They may also be used to provide
second-order information to the models.

The gradient of the smooth function $f$ is assumed to be Lipschitz continuous
by blocks, that is, for each $i = 1, \dots, m$, there exists a constant $L_i > 0$
such that 
\begin{equation} \label{eq:lipcondblk}
  \|\nabla _if(x+U_ih_i)-\nabla _if(x)\|_{(i)}^{*}\le L_i\|h_i\|_{(i)},\ h_i\in
  \R^{p_i},\ i=1,\dots, m, 
\end{equation}
for all $x$ such that $l\le x \le u$ and 
$\nabla _i f(x) = U_i^T\nabla f(x)$.

Let $B \in \R^{n \times n}$ be the block diagonal positive definite matrix
stated as 
\[
B := \text{diag}(L_1 B_1, \cdots, L_m B_m) \in \mathbb{R}^{n \times n},
\]
where, for  $i = 1, \ldots, m$, the scalars $L_i$ are as
in~\eqref{eq:lipcondblk} and the matrices $B_i$ are fixed above. We then
define the following primal and \change{dual} norms in $\mathbb{R}^n$:
\[
\|z\|_B = \sqrt{z^T B z} \quad \text{and} \quad 
\|z\|_B^* = \sqrt{z^T B^{-1} z}.
\]

For the vectors $x, y, l, u$, which can be indexed by coordinates or
blocks, we have adopted the convention that the subindex~$(i)$ refers to the
$i$th block, whereas $i$ refers to the $i$th component. The remaining vectors,
matrices, functions, and scalars used in the text, namely, $h_i$, $w_i$,
$p_i$, $s_i$, $B_i$, $U_i$, $\psi_i$, $\nabla_i f$, $L_i$, are only indexed by
blocks. Therefore, we simply use $i$ to refer to the $i$th block and avoid
overloading the notation. 

Throughout the text, $\|\cdot\|$ refers to either the Euclidean vector norm or
it's induced matrix norm. The cardinality of the set ${\cal I}$ is denoted by
$|{\cal I}|$. Given a positive integer $p$, we define the set of indices
$[p]:= \{1, \ldots, p \}$. \change{The feasible set of
problem~\eqref{eq:mainprob}} is denoted by
\begin{equation}\label{eq:box}
  \X:= \{ x \in \R^n \mid l \leq x \leq u\},
\end{equation}
and its objective function by $F(x):= f(x) + \psi(x)$.

 \selectlanguage{english}

\section{On the sequential Active BCDM}
\label{sec:BCDM}

Our objective is to parallelize the block coordinate descent method with
the identification of active variables (Active BCDM) \change{introduced}
in~\cite{LSS19}\change{, see Algorithm~\ref{alg:ActiveBCDM}}. \change{Let us}
start describing its essential components.

Given a vector $x \in \R^n$ and \change{a} block index $i$, the block descent
direction employed in the Active BCDM, $h_i(x)$, is the minimizer of the
subproblem
\begin{equation} \label{eq:subprob}
  \begin{aligned}
    \min_{h \in \R^{p_i}} &\quad \nabla_i f(x)^{T} h + \dfrac{L_i}{2}
    \| h\|_{(i)}^2 + \psi_i(x_{(i)}+h) - \psi_i(x_{(i)}),\\
    \st        &\quad l \le x+U_ih \le u.
  \end{aligned}
\end{equation}
Such descent directions were introduced in~\eqref{eq:subprob} and \change{they} are
supported theoretically by the fact that null descent directions for all blocks
\change{are} equivalent to the stationarity of the current iterate~\cite[Lemma 2]{LSS19}.

\begin{algorithm}[!ht]
  \caption{BCDM with Identification of Active Variables (Active BCDM)}
  \label{alg:ActiveBCDM}
  
  \begin{algorithmic}[1]
    
    \State Choose an initial point $l \le x^0 \le u$, \change{a block separation of $\psi$ function described as $x = \sum_{i=1}^{m}U_ix_{(i)}$,} an initial cycle
    size $c_0$, the parameters $\ell_{\max} \in \mathbb{N}$,
    $\varepsilon\in\R_{+}$ and two natural numbers
    $\delta_F, \delta_{DP}$. Initialize a vector $v\in\R^n$ with
    $2\varepsilon$ in all positions, the sets $\mathcal{I}=[m]$,
    $\mathcal{J}=\emptyset$, the cycle size $c_s=c_0$, and the counter
    $\ell = 0$.
    \State \change{Calculate the Lipschitz constants of the gradient by blocks satisfying~\eqref{eq:lipcondblk}.}
    \Repeat
    \For {$k = \ell+ 1, \ldots, \ell + c_s$}
    
    \parState{Choose a block $i$ that satisfies the probability
    distribution
    $$
    \mathbb{P}(i) = \begin{cases}
      \dfrac{\delta_{DP}}{\delta_{DP}|\mathcal{I}| + |\mathcal{J}|}, &
      \text{if}\ i \in \mathcal{I},\\[12pt]
      \dfrac{1}{\delta_{DP}|\mathcal{I}|+ |\mathcal{J}|},  & \text{if}\ i \in
      \mathcal{J}.
    \end{cases}
    $$
    }
    
    \State Find $h_{i} \equiv h_{i}(x^k)$, a solution to the
    subproblem \eqref{eq:subprob}
    \State Set $x^{k+1} = x^k + U_ih_{i}$ and
    $v_{(i)}=h_{i}$.
    \EndFor
    
    \State Set $\ell = \ell + c_s$.
    
    \parState{Obtain the set
    $\mathcal{C}(x^{\ell})\subset [m]$, where
    \begin{equation*}
      \mathcal{C}(x^{\ell }) = \left \{ i \ | \ g_j(x^{\ell }) \ge
      \rho_{\alpha}(x^{\ell }), \forall x_j^{\ell} \
      \mbox{s.t.}\  j\  \mbox{belongs to the $i$th-block} \right \},
    \end{equation*}
      \change{with the
    identification function $\rho_{\alpha}(x)$ described in~\eqref{eq:funident}.}
    }
    
    \State Define $\mathcal{J} \subset \mathcal{C}(x^{\ell})$ and
    $\mathcal{I} =[m] \setminus\mathcal{J}$. Set $c_s = \max\{\min\{ \delta_F |\mathcal{I}|,m \}, c_0\}$.
    \Until $\|v\|\le \varepsilon$ or $\ell \, \ge \, \ell_{\max}$
  \end{algorithmic}
\end{algorithm}

In~\cite{LSS19}, the blocks are chosen from a nonuniform probability
distribution. The coordinate blocks are split into two groups: one, denoted by
$\J$, containing a subset of the blocks for which the variables are likely to
activate the bound constraints of the problem, and the other, denoted by $\I$,
with the remaining blocks. The blocks in $\I$ are updated by a probability
distribution that is $\delta_{DP}$ times more likely than the blocks in the
set $\J$, since the objective function value is expected to have a larger
decrease for variables in $\I$ than for those in $\J$. Such a strategy has
proved effective for problems with $\psi(x)=\lambda\|x\|_1$, where $\lambda>0$
is a regularization parameter. Both sets $\I$ and $\J$ are updated along the
method after a pre-established cycle of iterations. The strategy employed to
classify a block of coordinates as active is based upon an identification
function~\cite{FFK98}. Such functions can unveil, in a neighborhood of a
stationary point $x^*$, which constraints are active at $x^*$. 

\change{One example of an identification function that can be used in line~10 
 of Active BCDM, Algorithm~\ref{alg:ActiveBCDM}, was given in
~\cite[Propositions~2 and~3]{LSS19}. To present it, we start by describing the
feasible set $\mathcal{X}$} using the function $g: \mathbb{R}^n \rightarrow
\mathbb{R}^{2n}$, defined by
\[
g_i(x):= \left\{
\begin{array}{ll}
  l_i - x_i, & \text{if} \; 1 \leq i \leq n,\\
  x_i - u_i, & \text{if} \; n+1 \leq i \leq 2n.
\end{array}
\right.
\]
Then, \change{the feasible set $\mathcal{X}$} may be rewritten as
\[
\mathcal{X} = \left\{  x \in \mathbb{R}^n \mid g(x) \leq 0 \right\}.
\]

Given $x \in \mathcal{X}$ and $\beta\in\mathbb{R}_+$, $h^{\beta}(x) \in
\mathbb{R}^n$ is defined as the unique minimizer of
\begin{equation}\label{eq:probsoma}
  \begin{aligned}
    \min_{h \in \R^n} &\quad \sum_{i=1}^m \left( \nabla_i f(x)^{T} h_i + \dfrac{\beta L_i}{2}
    \| h_i\|_{(i)}^2 + \psi_i(x_{(i)}+h_i) \right)\\
    \st        &\quad l_i \le x_{(i)}+h_i \le u_i, \  i \in [m].
  \end{aligned}
\end{equation}
\change{Additionally, let $h(x) \in \mathbb{R}^n$} \change{be defined as $h^\beta(x)$, for $\beta = 1$, that is, the solution of}
\begin{equation}\label{eq:probsomab=1}
  \begin{aligned}
    \min_{h \in \R^n} &\quad \sum_{i=1}^m \left( \nabla_i f(x)^{T} h_i + \dfrac{L_i}{2}
    \| h_i\|_{(i)}^2 + \psi_i(x_{(i)}+h_i) \right)\\
    \st        &\quad l_i \le x_{(i)}+h_i \le u_i, \  i \in [m].
  \end{aligned}
\end{equation}

Notice that problems~\eqref{eq:subprob} and~\eqref{eq:probsomab=1} are closely
related, since the block separability of~\eqref{eq:probsomab=1} implies that the
$i$th block of $h(x)$ is the solution $h_i(x)$ of \eqref{eq:subprob}.

\change{Finally, we draw inspiration from  \change{an idea introduced in~\cite{idenfunc}
 to define a function that identifies the active constraints at the stationary points
 of an optimization problem, namely
\begin{equation}\label{eq:funident}
  \rho_{\alpha}(x) := - \|h(x)\|^{\alpha},
\end{equation}
where $\alpha \in (0, 1)$. Under an error-bound assumption, it was shown
in~\cite[Propositions 2 and 3]{LSS19} that $\rho_\alpha$ is an \change{identification}
function for~\eqref{eq:mainprob} that is highly effective when used in Active
BCDM in the context of $\ell_1$-regularization problems.}}
 \selectlanguage{english}

\section{Parallel Active BCDM}
\label{sec:PCDM}

The parallelization of the Active BCDM Algorithm is not straightforward. An
initial idea would be to distribute the task of the {\tt for} loop to the available
threads, choosing $\tau$ blocks obeying the nonuniform probability
distribution to be updated in parallel. However, \change{to ensure descent of
the objective function, it would be necessary to calculate the Lipschitz
constant for each selected group of blocks and an associated descent
direction. This would greatly affect efficiency.}

To maintain the computational cost of the parallel iterations as close as the
serial \change{version}, both methods \change{use the same} Lipschitz constants
for a fixed block coordinate \change{structure}. Moreover, it is desirable that
both \change{variations should employ} the same nonuniform distribution, which
privileges updating the blocks of inactive variables. To achieve this, we have
\change{adapted the} theoretical framework based upon~\cite{RT2016}. 

We start with a concept presented in~\cite[Section 1.5]{RT2016} which extends
the block separability notion to the smooth part of the objective function of
problem~\eqref{eq:mainprob}. \change{This enables the} characterization of a
wider class of smooth functions $f$ that might benefit from the parallelism.

\begin{definition}\label{def:partsep} 
  The convex and smooth function $f$ is \textbf{partially separable of
  degree~$\omega$} if there exists a finite number of smooth functions~$f_S$
  such that 
  \begin{equation*}
    f(x) = \sum\limits_{S\in\S} f_S(x),
  \end{equation*}
  where $\S$ is a finite collection of nonempty subsets of $[m]$, $f_S$ are
  differentiable convex functions that only depend on blocks $x_{(i)}$ for
  $i\in S$ and 
  $$|S|\le \omega, \ \forall S\in\S.$$
\end{definition}

\change{When} applying block coordinate descent methods to a partially separable
function, \change{we must determine the value of $\omega$ as it influences
the convergence of the methods. Specifically, we aim to find the minimal value
$\omega$ that satisfies Definition~\ref{def:partsep};} however, we may
need to consider larger values for $\omega$ to ensure that the coordinate
blocks of $f_S$, for all $S \in \S$, satisfy a desirable property. 

The partial separability notion is useful \change{because the structure of the
smooth component of relevant problems, like Lasso and $\ell_1$-regularized
logistic regression, allows for a simple} computation of the degree $\omega$;
for further details see~\cite[Section 1.5]{RT2016}.  

 \selectlanguage{english}

\subsection{Descent directions for the method}
\label{sub:PCDMdec}

Throughout this subsection, we assume that the point $x \in \X$, the number
$\tau > 0$ of threads that will work in parallel, together with the disjoint
sets of indices $\I$ and $\J$ that split the groups of coordinate blocks ($\I
\cup \J = [m]$) \change{are all fixed}. 

Let us also recall the concept of a multiset~\cite[Definition
1]{HickmanMultiset}, a generalization of the notion of a set that allows
repetitions of its elements. In the finite case, a multiset can be defined as
a set of tuples, $\{ (x_1, c_1), (x_2, x_2), \ldots (x_k, c_k) \}$, where, for
$i = 1, \ldots, k$, $x_i$ represents the element in the multiset and $c_i$ is
a positive integer representing the cardinality of $x_i$ in the multiset. In
other words, $c_i$ represents the number of times $x_i$ appears repeated in
the multiset. The cardinality of a multiset is the sum of the cardinalities of
its elements. From now on, we will adopt a simplified abuse of notation and
denote a multiset using the notation of a set with the elements $x_i$
appearing $c_i$ times. For example, we will denote the multiset $\{ (1, 2),
(2, 1), (3, 4) \}$ by $\{1, 1, 2, 3, 3, 3, 3 \}$.

Now, let $\B$ be randomly chosen from $[m]$, with cardinality $\tau$, 
allowing possible repetitions, and following the probability distribution of
Algorithm~\ref{alg:ActiveBCDM}. Observe that this definition allows the
algorithm to select a block of coordinates multiple times in a single step.
The objective is to select $\B$ and a constant $\beta \geq 1$, such that
updating $x + \sum_{i\in\B} U_i \hbeta{i}$ produces descent for $F$ in
expectation, where $\hbeta{i}\in\R^{p_i}$ is the solution of
\begin{equation} \label{eq:subprobparal}
  \begin{aligned}
    \min_{h \in \R^{p_i}} &\quad \nabla_i f(x)^{T} h + \dfrac{\beta L_i}{2}
    \| h\|_{(i)}^2 + \psi_i(x_{(i)}+h) - 
    \psi_i(x_{(i)}),\\
    \st        &\quad l \le x+U_ih \le u.
  \end{aligned}
\end{equation}
To reach this goal, a few auxiliary definitions and results are provided.

\begin{definition}[{\bf Intersection keeping all values}]
  Let $I$ and $J$ be a set and a multiset of indexes in $[m]$, respectively.
  The multiset of elements of $J$ that are in $I$ is stated as
  $\sourint{I}{J}$, and the cardinality of such a multiset is denoted by
  $\ourint{I}{J}$.
\end{definition}

For the sake of illustration, let $I=\{1,3,5\}$ and $J=\{1,1,2,3,4\}$. In this
case, $\sourint{I}{J} = \{1,1,3\}$ and $\ourint{I}{J} = 3$, since, considering
the repetitions, the multiset $J$ contains three elements of $I$. 

\begin{proposition}\label{prop:probcond} Let $\B$  be  a multiset randomly chosen
with elements from $[m]$, with cardinality $\tau$, following the probability
distribution of Algorithm~\ref{alg:ActiveBCDM}, and let $S \subset [m]$ be a
nonempty set. For $k\in\mathbb{N}$, with $1\le k \le \tau$, it holds
  \begin{equation}\label{eq:prop-inters1}
    \mathbb{P}(i\in \B \ | \ \ourint{S}{\B} = k) = \begin{cases}
      \dfrac{k\delta_{DP}}{\delta_{DP}|\I \cap S|+|\J \cap S|}, & \text{if} \ i\in \I\cap S;\\
      \\
      \dfrac{k}{\delta_{DP}|\I \cap S|+|\J \cap S|}, & \text{if} \ i\in \J\cap S.\\
    \end{cases}
  \end{equation}
\end{proposition}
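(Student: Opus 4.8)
The plan is to exploit the fact that $\B$ is generated by $\tau$ \emph{independent} draws from the fixed distribution of Algorithm~\ref{alg:ActiveBCDM}, so that conditioning on the number $\ourint{S}{\B}=k$ of draws landing in $S$ acts cleanly on the remaining randomness. Write $\B=(b_1,\dots,b_\tau)$ with the $b_t$ i.i.d., let $p_j$ denote the unconditional selection probability of block $j$, and set $P_S:=\sum_{j\in S}p_j=\mathbb{P}(b_t\in S)$. Since $S$ is nonempty and $S\subset[m]=\I\cup\J$, the quantity $\delta_{DP}|\I\cap S|+|\J\cap S|$ is positive, and the hypothesis $1\le k\le\tau$ guarantees that the conditioning event has positive probability, so every expression below is well defined.

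First I would record the single-block conditional probability. Given that one draw lands in $S$, its value is distributed according to the original distribution renormalized over $S$; concretely $q_i:=\mathbb{P}(b_t=i\mid b_t\in S)=p_i/P_S$. Substituting the two cases of the distribution of Algorithm~\ref{alg:ActiveBCDM} and observing that the global normalizer $\delta_{DP}|\I|+|\J|$ cancels between $p_i$ and $P_S$, this yields $q_i=\delta_{DP}/(\delta_{DP}|\I\cap S|+|\J\cap S|)$ for $i\in\I\cap S$ and $q_i=1/(\delta_{DP}|\I\cap S|+|\J\cap S|)$ for $i\in\J\cap S$, that is, exactly the right-hand sides of~\eqref{eq:prop-inters1} divided by $k$.

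It then remains to recover the factor $k$. The key probabilistic observation is that, conditioned on $\ourint{S}{\B}=k$, the sub-multiset $\sourint{S}{\B}$ consists of exactly $k$ independent draws, each distributed as $q$ on $S$, while the remaining $\tau-k$ draws lie outside $S$ and hence can never equal a fixed $i\in S$. I would therefore express the quantity of interest as the sum over the $k$ in-$S$ slots of the probability that each equals $i$; by linearity this sum equals $kq_i$, which is precisely~\eqref{eq:prop-inters1}. Equivalently, avoiding any appeal to the conditional-i.i.d.\ structure, I would write $\sum_{t}\mathbb{P}(b_t=i\mid\ourint{S}{\B}=k)=\tau\,\mathbb{P}(b_1=i\mid\ourint{S}{\B}=k)$ by slot symmetry, factor the joint probability through the binomial count of in-$S$ draws, and simplify $\binom{\tau-1}{k-1}/\binom{\tau}{k}=k/\tau$ so that $\tau$ cancels and $kq_i$ survives.

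The only genuinely delicate point is the multiset bookkeeping: because $\B$ may contain $i$ more than once, the quantity on the left of~\eqref{eq:prop-inters1} must be read as accumulating the contribution of every in-$S$ slot that equals $i$, consistent with the multiset abuse of notation fixed earlier in this section. It is this accounting, rather than an ``appears at least once'' reading, that produces the clean linear factor $k$; once the interpretation is fixed, the remaining steps are the elementary renormalization of paragraph two and the cancellation of $\tau$, with no further obstacle.
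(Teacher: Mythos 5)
Your argument is correct and follows essentially the same route as the paper's proof: both reduce the left-hand side to the ratio $k\,p_i/p_1$ by conditioning on the number of draws landing in $S$, renormalizing the selection probabilities over $S$, and cancelling the common binomial and $p_2^{\tau-k}$ factors (your identity $\binom{\tau-1}{k-1}/\binom{\tau}{k}=k/\tau$ is exactly the paper's $\binom{k}{1}\binom{\tau}{k}/\binom{\tau}{k}=k$ after slot symmetry). Your explicit remark that the quantity must be read as the conditional expected multiplicity of $i$ in the multiset $\B$ rather than the probability of at least one occurrence is a welcome clarification of a point the paper's enumeration with the factor $\binom{k}{1}$ uses implicitly, and it is the reading required for the later application in Lemma~\ref{lem:espPCDM}.
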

\begin{proof}
  First, notice that from the probability distribution of Algorithm~\ref{alg:ActiveBCDM} we have
  \begin{equation}\label{eq:prop-inters2}
    \mathbb{P}(i) = \begin{cases}
      \dfrac{\delta_{DP}}{\delta_{DP}|\mathcal{I}| + |\mathcal{J}|}, &
      \text{if}\ i \in \mathcal{I},\\[12pt]
      \dfrac{1}{\delta_{DP}|\mathcal{I}|+ |\mathcal{J}|},  & \text{if}\ i \in
      \mathcal{J}.
    \end{cases}	 
  \end{equation}
  Additionally, observe that the integer $k\in\mathbb{N}$ such 
  that $1\le k\le \tau$ is constant along the proof. 
  
  Let $i\in \I\cap S$. To reach expression~\eqref{eq:prop-inters1}, we 
  will rest upon the conditional probability formula:
  \begin{equation}\label{eq:prop-inters3}
    \mathbb{P}(i\in \B \ | \ \ourint{S}{\B} = k)=
    \dfrac{\mathbb{P}(i\in \B \ \text{and} \ \ourint{S}{\B} = k)}
    {\mathbb{P}(\ourint{S}{\B} = k)}.
  \end{equation}
  
  For computing the numerator above, let us calculate the following values:
  
  \begin{enumerate}
    \item Probability of the index $i$ to be chosen in a certain position.
    $$p_{\I}\stackrel{\eqref{eq:prop-inters2}}{=}\dfrac{\delta_{DP}}
    {\delta_{DP}|\mathcal{I}| + |\mathcal{J}|}.$$
    
    \item Probability of an element of $S$ to be chosen in a certain position.
    This is obtained by adding up the probabilities of each element of $S$ to
    be in a certain position of $\B$, that is,
    $$p_1=\sum_{i\in \I\cap S}\dfrac{\delta_{DP}}{\delta_{DP}|\mathcal{I}| 
    + |\mathcal{J}|}+\sum_{j\in \J\cap S}\dfrac{1}{\delta_{DP}|\mathcal{I}| 
    + |\mathcal{J}|}=\dfrac{\delta_{DP}|\I\cap S|
    +|\J\cap S|}{\delta_{DP}|\mathcal{I}| + |\mathcal{J}|}.$$
    
    \item Probability of an element that does not belong to $S$ to be chosen
    in a certain position. This is obtained by adding up the probabilities of
    each element that does not belong to $S$ to be in a certain position of
    $\B$, namely,
    \begin{eqnarray*}
      p_2&=&\sum_{i\in \I \setminus S}\dfrac{\delta_{DP}}
      {\delta_{DP}|\mathcal{I}| + |\mathcal{J}|}+\sum_{j\in \J \setminus S}
      \dfrac{1}{\delta_{DP}|\mathcal{I}| + |\mathcal{J}|}\\
      &=&\dfrac{\delta_{DP}(|\I|-|\I\cap S|)+|\J|-|\J\cap S|}
      {\delta_{DP}|\mathcal{I}| + |\mathcal{J}|}.
    \end{eqnarray*}
  \end{enumerate}
  
  Now, to obtain the probability that  $i\in \B$ and  $\ourint{S}{\B} = k$,
  one should organize the previous values. Assume the index~$i$ is chosen in
  the first position, the next $k-1$ positions are elements of~$S$ and the
  last~$\tau - k$ positions are elements of $[m] \setminus S$:
  \begin{equation}\label{eq:prop-inters4}
    p_{\I} \overbrace{p_1 \dots p_1}^{k-1} 
    \underbrace{p_2 \dots p_2}_{\tau-k}.
  \end{equation}
  Notice that \eqref{eq:prop-inters4} amounts to the probability of a specific
  event in which the index $i$ appears in the first position, and the $k$
  first terms are elements of $S$. To account for the remaining events of
  $i\in \B$ and  $\ourint{S}{\B} = k$, we must introduce two additional
  factors. First, $\binom{\tau}{k}$, the number of possibilities of
  distributing the $k$ elements of $S$ in the $\tau$ positions of the random
  multiset $\B$. Second, $\binom{k}{1}$, the number of possibilities of
  setting $i$ among the $k$ elements of $S$ that belong to $\B$. Therefore, 
  \begin{equation}\label{eq:prop-inters5}
    \mathbb{P}(i\in \B \ \text{and} \ \ourint{S}{\B} = k)=
    \binom{k}{1}\binom{\tau}{k}p_{\I} \overbrace{p_1 \dots p_1}^{k-1} 
    \underbrace{p_2 \dots p_2}_{\tau-k}.
  \end{equation}
  
  Now, let us compute the probability of $\ourint{S}{\B} = k$, i.e., the 
  denominator of~\eqref{eq:prop-inters3}. We start by evaluating the 
  probability of the particular event of $\B$ in which the first $k$ indices 
  are elements of $S$, and the last $\tau-k$ elements  do not belong to $S$:
  \begin{equation}\label{eq:prop-inters6}
    \overbrace{p_1 \dots p_1}^{k} \underbrace{p_2 \dots p_2}_{\tau-k}.
  \end{equation}
  
  As in the previous reasoning, the remaining related events must be
  considered. Indeed, the probability~\eqref{eq:prop-inters6} has to be
  multiplied by $\binom{\tau}{k}$, the number of possibilities of distributing
  the $k$ elements of $S$ in the $\tau$ positions of the random multiset $\B$.
  Hence, 
  \begin{equation}\label{eq:prop-inters7}
    \mathbb{P}(\ourint{S}{\B} = k)=\binom{\tau}{k}\overbrace{p_1 \dots p_1}^{k} 
    \underbrace{p_2 \dots p_2}_{\tau-k}.
  \end{equation}
  
  Replacing~\eqref{eq:prop-inters5} and~\eqref{eq:prop-inters7}
  in~\eqref{eq:prop-inters3}, we obtain
  \begin{eqnarray*}
    \mathbb{P}(i\in \B \ | \ \ourint{S}{\B} = k)
    &=&\dfrac{\binom{k}{1}
    \binom{\tau}{k}p_{\I} p_1 ^{k-1} 
    p_2^{\tau-k}}{\binom{\tau}{k}
    p_1^{k} p_2^{\tau-k}}\\[5pt]
    &=&\dfrac{kp_{\I}}{p_1}\\[5pt]
    &=&\dfrac{k\frac{\delta_{DP}}{\delta_{DP}|\mathcal{I}| 
    + |\mathcal{J}|}}{\frac{\delta_{DP}|\I\cap S|
    +|\J\cap S|}{\delta_{DP}|\mathcal{I}|     + |\mathcal{J}|}}\\[5pt]
    &=&\dfrac{k\delta_{DP}}{\delta_{DP}|\I\cap S|+|\J\cap S|}.
  \end{eqnarray*}
  
  Now, assuming that $i\in\J\cap S$, we proceed as before, using the fact that
  the probability of the index $i$ being chosen in a certain position is given
  by
  $$p_{\J}\stackrel{\eqref{eq:prop-inters2}}{=}
  \dfrac{1}{\delta_{DP}|\mathcal{I}| + |\mathcal{J}|}.$$
  
  Arguing as in the previous case, we have 
  \begin{eqnarray*}
    \mathbb{P}(i\in \B \ | \ \ourint{S}{\B} = k)&=&
    \dfrac{\binom{k}{1}\binom{\tau}{k}p_{\J} 
    p_1^{k-1}  p_2^{\tau-k}}
    {\binom{\tau}{k} p_1^{k} 
    p_2 ^{\tau-k}}\\[5pt]
    &=&\dfrac{kp_{\J}}{p_1}\\[5pt]
    &=&\dfrac{k\frac{1}{\delta_{DP}|\mathcal{I}| +
    |\mathcal{J}|}}{\frac{\delta_{DP}|\I\cap S|
    +|\J\cap S|}{\delta_{DP}|\mathcal{I}| + |\mathcal{J}|}}\\[5pt]
    &=&\dfrac{k}{\delta_{DP}|\I\cap S|+|\J\cap S|},
  \end{eqnarray*}
  the proof is complete.
\end{proof}

It is worth mentioning that formulas~\eqref{eq:prop-inters1}
and~\eqref{eq:prop-inters7} are also valid for $k=0$. This case is not
included in Proposition~\ref{prop:probcond} because the theoretical reasoning
to prove~\eqref{eq:prop-inters1} differs from the analysis regarding  $1\le
k\le \tau$. Nevertheless, the aforementioned probability values for $k=0$ are
easily verifiable: $\ourint{S}{\B} = 0$ implies that $S \cap \B = \emptyset$,
and thus, if either $i\in \I\cap S$ or $i\in \J\cap S$ then $i \not \in \B$,
so that $ \mathbb{P}(i\in \B \ | \ \ourint{S}{\B} = 0) =0$. Moreover, it is
not difficult to see that $ \mathbb{P}(\ourint{S}{\B} = 0) =p_2^\tau$.

Before the main result of this section, we will show that,   given $\I$ and $\J$, two fixed and disjoint sets of indices such that $\I \cup \J = [m]$, it is always
 possible to extend the partial separability description of
$f(x) = \sum_{S \in \S} f_S(x)$ in such way that it conforms to
\begin{gather*}
  |\I \cap S_1| = |\I \cap S_2|, \ \forall \ S_1,S_2\in\S, \\
  |\J \cap S_1| = |\J \cap S_2|, \ \forall \ S_1,S_2\in\S,
\end{gather*} 
 the respective degree of partial separability is at most doubled, and it can
be easily computed from the original $\omega$, $|\I|$ and $|\J|$.  

To achieve this, let $\I$ and $\J$ be fixed  
and set $\S^\prime = \emptyset$. Select $S \in \S$. As $|S| \leq \omega$, it
follows that 
\[
| S \cap \I | \leq \min \{ |\I|, \omega \}.
\]
If the equality holds, define $S^\prime = S$. Otherwise, there are $\min
\{|\I|,\omega\} - |S \cap\I|$ block indexes in $\I$ that can be added to $S$,
creating an $S^\prime$ such that $|S^\prime \cap \I| = \min \{ |\I|, \omega \}$.
Similarly, we can define $S^\prime$ from $S$, adding extra elements from $\J$
whenever necessary, such that $|S^\prime \cap \J| = \min \{ |\J|,\omega\}$.
Finally, define $S^\prime = S \cup I \cup J$. As $I \subset \I$ and $J \subset
\J$, these two sets are disjoint, and $S^\prime$ obeys
\[
| S^\prime \cap \I |  = \min \{ |\I|, \omega \}
\quad \text{and} \quad 
|S^\prime \cap \J |  = \min \{ |\J|, \omega \}.
\]
Clearly $S^\prime \supset S$. Now, define $f_{S^\prime} = f_S$. It is true
that the function $f_{S^\prime}$ only depends on variables that are in $S^\prime$, as it
only depends on the variables in $S$. Moreover,
\begin{equation}
| S^\prime | = |S^\prime \cap \I| + |S^\prime \cap \J| = 
\min \{ |\I|, \omega \} + \min \{ |\J|, \omega \} \leq 2\omega.
\end{equation}

Finally, repeat the process to all $S\in\S$ to obtain $\S^\prime =
\{S_1^\prime, S_2^\prime, \ldots, S_{|\S|}^\prime \}$, a partial separability
decomposition of $f$ that has the desired properties. 

The following example illustrates the procedure above. Let $m = 7$, $\I = \{
1, 2\} $ and $\J = \{3,4, 5, 6, 7\}$ and $\S$ be composed of the sets $S_1 =
\{1, 4\}$, $S_2 = \{2\}$, $S_3 = \{ 2, 5, 6 \}$, and $S_4 = \{1 ,3, 7\}$. In
this case, $\omega = 3$, $\min\{|\I|,\omega\}=2$ and $\min\{|\J|,\omega\}=3$. 

For $S_1$, we have $| S_1 \cap \I | = 1 < 2 = \min\{|\I|,\omega\}$, and we can
define $I_1 = \{ 2 \}$. As for $| S_1 \cap \J | = 1 < 3=\min\{|\J|,\omega\}$
and we choose $J_1$ as $\{3,5 \}$. Hence, $S^\prime_1 = S_1 \cup I_1 \cup J_1
= \{1, 2, 3,4, 5 \}$. Similarly, we get $S^\prime_2 = \{1, 2, 3,4,5\}$,
$S^\prime_3 = \{1, 2, 3, 5, 6\}$, $S^\prime_4 = \{ 1, 2, 3,4, 7 \}$, and
$\S^\prime = \{ S^\prime_1, S^\prime_2, S^\prime_3, S^\prime_4 \}$. All
intersections of the elements of $\S^\prime$ with $\I$ and $\J$ have the same cardinality. The partial
separability degree associated with $\S^\prime$ is
$5=\min\{|\I|,\omega\}+\min\{|\J|,\omega\} < 2 \omega$.

We summarize these ideas in the following lemma. Its proof is basically the
discussion that precedes the example.
\begin{lemma} \label{lem:equalsep} 
    Lef $f = \sum_{S \in \S} f_S$ be a partially separable decomposition of $f$
  with degree $\omega$ and  let $\I$ and $\J$ be two fixed and disjoint sets such that $\I \cup \J =[m]$. It is possible to extend the
  sets in $\S$ to form a new decomposition $\S^\prime$ such that, for all $S^\prime \in \S^\prime$,
  \[
    |\I \cap S^\prime|= \min\{|\I|,\omega\} \quad \text{and} \quad
    |\J \cap S^\prime|= \min\{|\J|,\omega\}.
  \]
  Hence, the decomposition $\S^\prime$ has degree $\omega^\prime = \min \{ |\I|, \omega \} + \min \{ |\J|,
  \omega \} \leq 2\omega$.
\end{lemma}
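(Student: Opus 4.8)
The plan is to build the new decomposition $\S'$ one block at a time: I would process each member $S \in \S$ independently, enlarging it to a superset $S'$ by padding it with spare block indices drawn from $\I$ and from $\J$, forcing the two intersection cardinalities up to their target values $\min\{|\I|,\omega\}$ and $\min\{|\J|,\omega\}$. Concretely, fix $S \in \S$. Since $|S| \le \omega$ and $|\I \cap S| \le |\I|$, we have $|\I \cap S| \le \min\{|\I|,\omega\}$, so the target is never already exceeded; I would then adjoin a set $I \subseteq \I \setminus S$ of exactly $\min\{|\I|,\omega\} - |\I \cap S|$ indices, and symmetrically a set $J \subseteq \J \setminus S$ of exactly $\min\{|\J|,\omega\} - |\J \cap S|$ indices, and define $S' := S \cup I \cup J$.

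The first thing to verify is that this padding is always feasible, that is, that $\I \setminus S$ really contains enough spare indices. This reduces to the inequality $|\I \setminus S| = |\I| - |\I \cap S| \ge \min\{|\I|,\omega\} - |\I \cap S|$, which is equivalent to $|\I| \ge \min\{|\I|,\omega\}$ and therefore holds trivially; the identical argument applies to $\J$. By construction $I \subseteq \I$ and $J \subseteq \J$, so $|\I \cap S'| = \min\{|\I|,\omega\}$ and $|\J \cap S'| = \min\{|\J|,\omega\}$, which is exactly the claimed uniformity property.

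Next I would compute the degree. Because $\I$ and $\J$ are disjoint with $\I \cup \J = [m]$, every $S' \subseteq [m]$ splits as the disjoint union $(\I \cap S') \cup (\J \cap S')$, whence
\begin{equation*}
  |S'| = |\I \cap S'| + |\J \cap S'| = \min\{|\I|,\omega\} + \min\{|\J|,\omega\} =: \omega',
\end{equation*}
and, since each minimum is at most $\omega$, we obtain $\omega' \le 2\omega$. It then remains to check that $\S' = \{S_1', \ldots, S_{|\S|}'\}$, equipped with $f_{S'} := f_S$, is still a valid partially separable decomposition of $f$: by relabeling, $\sum_{S' \in \S'} f_{S'} = \sum_{S \in \S} f_S = f$, and each $f_{S'} = f_S$ depends only on the blocks in $S$, which are contained in $S'$ because $S \subseteq S'$; hence $f_{S'}$ depends only on blocks in $S'$, as Definition~\ref{def:partsep} demands.

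I do not expect a genuine obstacle here, since the statement is essentially a bookkeeping argument and its proof is precisely the constructive discussion preceding the worked example. The only point demanding care is the feasibility of the padding step, where one must confirm that $\I \setminus S$ and $\J \setminus S$ always contain enough unused indices; this is where the bound $|S| \le \omega$ and the elementary inequality $|\I| \ge \min\{|\I|,\omega\}$ enter. A secondary subtlety is that the reassignment $f_{S'} = f_S$ must not violate the dependency requirement of Definition~\ref{def:partsep}, but this follows immediately from $S \subseteq S'$.
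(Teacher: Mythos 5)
Your proposal is correct and follows essentially the same construction as the paper: pad each $S\in\S$ with unused indices from $\I$ and $\J$ to reach the target intersection cardinalities, set $f_{S'}=f_S$, and use the partition $[m]=\I\cup\J$ to compute $|S'|=\min\{|\I|,\omega\}+\min\{|\J|,\omega\}\le 2\omega$. The only difference is that you make the feasibility of the padding step (enough spare indices in $\I\setminus S$ and $\J\setminus S$) explicit, which the paper leaves implicit.
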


We are now ready to state the main \change{result} of this subsection.
\change{It presents the expected decrease of the objective
function of problem~\eqref{eq:mainprob} whenever blocks of coordinates are
updated following the probability distribution of
Algorithm~\ref{alg:ActiveBCDM}.}

\begin{lemma}\label{lem:espPCDM}
  Consider  $x\in\X$,  $h^{\beta}=[\hbeta{1},\hbeta{2},\dots,\hbeta{m}]^T\in\R^n$, 
  in which $\hbeta{i}$ is the solution of~\eqref{eq:subprobparal}, 
  and a random multiset $\B$ as in Proposition~\ref{prop:probcond}, with $|\B|=\tau$. 
  Additionally, suppose that  $f$ is a partially separable function of degree $\omega$, and let $\I$ and $\J$ be two fixed and disjoint sets of indices such that $\I \cup \J=[m]$. Then,
  \begin{small}
    \begin{eqnarray}\label{eq:espdecr-smo}
      \mathbb{E}\left[f\left(x+
      \sum\limits_{i\in \B}U_{i}\hbeta{i}\right)  \right] 
      & \le & f(x) + \sum\limits_{i\in\I} 
      \dfrac{\tau\delta_{DP}}{q}\left(\nabla_i f(x)^T\hbeta{i} + 
      \dfrac{\beta L_i}{2} \|\hbeta{i}\|^2_{(i)} \right)\nonumber\\
      &  & + \sum\limits_{i\in\J} 
      \dfrac{\tau}{q}\left(\nabla_i f(x)^T\hbeta{i} + 
      \dfrac{\beta L_i}{2} \|\hbeta{i}\|^2_{(i)} \right),
    \end{eqnarray}
  \end{small}
  with
  $\beta=\dfrac{(\tau-1)(\delta_{DP}\min\{|\I|,\omega\}+\min\{|\J|,\omega\})}{q}+1$,
  and $q=\delta_{DP}|\mathcal{I}| + |\mathcal{J}|$.
\end{lemma}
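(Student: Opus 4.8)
The plan is to follow the expected separable overapproximation strategy of~\cite{RT2016}, but adapted to the nonuniform sampling of Algorithm~\ref{alg:ActiveBCDM} through Proposition~\ref{prop:probcond}. First I would invoke Lemma~\ref{lem:equalsep} to replace the given decomposition $\S$ by an extended one $\S'$ for which $|\I\cap S|=\omega_\I:=\min\{|\I|,\omega\}$ and $|\J\cap S|=\omega_\J:=\min\{|\J|,\omega\}$ hold for \emph{every} $S\in\S'$; this normalization is what will ultimately make the curvature constant block-independent. Fixing a realization of $\B$ and writing $k_S:=\ourint{S}{\B}$, I would use partial separability $f=\sum_{S\in\S'}f_S$ together with the fact that each $f_S$ depends only on the blocks in $S$ to obtain $f(x+\sum_{i\in\B}U_i\hbeta{i})=\sum_{S\in\S'}f_S(x+\sum_{i\in\sourint{S}{\B}}U_i\hbeta{i})$. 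For each $S$ with $k_S\ge1$ I would rewrite the joint update as the convex combination $\frac{1}{k_S}\sum_{i\in\sourint{S}{\B}}(x+k_S U_i\hbeta{i})$ and apply Jensen's inequality to $f_S$, followed by the blockwise descent estimate furnished by~\eqref{eq:lipcondblk}. Reorganizing (and folding the per-piece Lipschitz constants back into the $L_i$, as in~\cite{RT2016}) yields the deterministic bound
\[
f\Big(x+\sum_{i\in\B}U_i\hbeta{i}\Big)\le f(x)+\sum_{i\in\B}\nabla_i f(x)^T\hbeta{i}+\frac{1}{2}\sum_{S\in\S'}k_S\sum_{i\in\sourint{S}{\B}}L_i\|\hbeta{i}\|_{(i)}^2,
\]
where the co-occurrence factor $k_S$, rather than $\tau$, is precisely the gain that partial separability provides.

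Next I would take expectation over the random multiset $\B$. The linear term is handled by linearity alone: each block $i$ occurs in $\B$ an expected $\tau\,\mathbb{P}(i)$ number of times, which equals $\tau\delta_{DP}/q$ for $i\in\I$ and $\tau/q$ for $i\in\J$, thereby reproducing exactly the linear coefficients appearing in~\eqref{eq:espdecr-smo}.

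The quadratic term is where Proposition~\ref{prop:probcond} enters, and I expect it to be the main obstacle. Conditioning on $\ourint{S}{\B}=k$ and using the proposition to evaluate the conditional expected contribution of each index in $\sourint{S}{\B}$, the inner sum collapses to $\frac{k}{\delta_{DP}\omega_\I+\omega_\J}\sum_{i\in S}c_iL_i\|\hbeta{i}\|_{(i)}^2$, with $c_i=\delta_{DP}$ for $i\in\I$ and $c_i=1$ for $i\in\J$; multiplying by the outer factor $k$ and summing over $k$ via the law of total probability produces the second moment $\mathbb{E}[k_S^2]$. Because the normalization from Lemma~\ref{lem:equalsep} makes $\ourint{S}{\B}$ a Binomial$(\tau,p_1)$ variable with the \emph{same} success probability $p_1=(\delta_{DP}\omega_\I+\omega_\J)/q$ for every $S\in\S'$, one obtains $\mathbb{E}[k_S^2]=\tau p_1\,(1+(\tau-1)p_1)$, and the factor $1+(\tau-1)p_1$ is exactly the constant $\beta$ in the statement. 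Summing over $S\in\S'$ (using $\sum_{S\ni i}$ to recover the full $L_i$) and recombining with the prefactor $\tfrac12$ then delivers the two quadratic sums of~\eqref{eq:espdecr-smo}.

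The delicate point on which I would spend the most care is this last computation: matching the conditional formula of Proposition~\ref{prop:probcond} to the expected multiplicity of a block within $\sourint{S}{\B}$, correctly propagating the extra factor $k_S$ introduced by the Jensen step, and verifying that the binomial second moment reproduces $\beta$ exactly. The equal-intersection normalization of Lemma~\ref{lem:equalsep} is the ingredient that keeps $p_1$, and hence $\beta$, independent of both $S$ and the particular block, which is what allows the double sum to close into the clean expression claimed.
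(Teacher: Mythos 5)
Your proposal is correct and follows essentially the same route as the paper's proof: normalize the decomposition via Lemma~\ref{lem:equalsep}, apply Jensen's inequality to each $f_{S'}$ through the $k$-scaled convex combination, invoke Proposition~\ref{prop:probcond} for the conditional expected multiplicities, and close with the second moment $\mathbb{E}[(\ourint{S'}{\B})^2]=\tau p_1((\tau-1)p_1+1)$ to identify $\beta$. The only cosmetic difference is that the paper isolates the remainder functions $\phi$ and $\phi_{S'}$ before taking conditional expectations, whereas you apply Jensen to $f_S$ directly and split the linear and quadratic parts afterwards; the computations coincide.
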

\begin{proof} 
Without loss of generality, we start by applying  Lemma~\ref{lem:equalsep} to build a new set $\S^\prime$ such that $f(x)= \sum_{S^\prime \in \S^\prime} f_{S^\prime}(x)$, which satisfies
\begin{equation}\label{eq:eqsets}
\begin{cases}
|\I \cap S^\prime| = \min\{|\I|,\omega\}, \ \forall \ S^\prime\in\S^\prime, \\
  |\J \cap S^\prime| = \min\{|\J|,\omega\}, \ \forall \ S^\prime\in\S^\prime   , 
\end{cases}  
\end{equation}     
with partial separability degree $\omega^\prime = \min \{ |\I|, \omega \} + \min \{ |\J|,
  \omega \}$.

Now, to simplify the notation, let
$H:=\sum\limits_{i\in \B}U_{i}\hbeta{i}$,
  and define the functions
$$\phi(h):=f(x+h)-f(x)-\nabla f(x)^Th,$$
$$\phi_{S^\prime}(h):=f_{S^\prime}(x+h)-f_{S^\prime}(x)-\nabla f_{S^\prime}(x)^Th,
  \forall \ S^\prime\in\S^\prime,$$
  in which $f_{S^\prime}, \ S^\prime\in\S^\prime$, are those functions that come from
  the partial separability of $f$ (cf. Definition~\ref{def:partsep}).
  Since
  \begin{eqnarray*}
    \mathbb{E}\left[\phi(H) \right]  & = & 
    \mathbb{E}\left[f(x+H)-f(x)-\nabla f(x)^TH \right]\\
    & = & \mathbb{E}\left[f(x+H) \right]-f(x)
    -\sum\limits_{i=1}^m\nabla_i f(x)^T\hbeta{i}\mathbb{P}(i\in\B)\\
    & = & \mathbb{E}\left[f(x+H)  \right] - f(x) 
    -\sum\limits_{i\in\I} \dfrac{\tau\delta_{DP}}{q}\nabla_i f(x)^T\hbeta{i} +
    \nonumber\\
    &   & - \sum\limits_{i\in\J} \dfrac{\tau}{q}\nabla_i f(x)^T\hbeta{i},
  \end{eqnarray*}
  to obtain the desired result, it is enough to ensure that 
  \begin{equation}\label{eq:2.2.1}\mathbb{E}\left[\phi(H)  \right] \le 
    \sum\limits_{i\in\I} \dfrac{\tau\delta_{DP}}{q}\left(
    \dfrac{\beta L_i}{2}\|\hbeta{i}\|_{(i)}^2\right)
    + \sum\limits_{i\in\J} \dfrac{\tau}{q}
    \left(\dfrac{\beta L_i}{2}\|\hbeta{i}\|_{(i)}^2\right),
  \end{equation}
  for some $\beta\in\mathbb{R}$.
  
  Resting upon the sets  $S^\prime\in\S^\prime$ from Lemma~\ref{lem:equalsep},
  the expected value of function  $\phi$ may be expressed as the 
  following sum of conditional expectations:
  \begin{eqnarray}\label{eq:2.2.2}
    \mathbb{E}\left[ \phi\left(H\right) 
    \right] & = & \sum_{k=0}^{\tau}\sum_{S^\prime\in \S^\prime} \mathbb{P}(\ourint{S^\prime}{\B}=k) 
    \mathbb{E}\left[ \phi_{S^\prime}\left(H\right) \ | \ \ourint{S^\prime}{\B}=k \right] .
  \end{eqnarray}
  
  The relations in~\eqref{eq:eqsets} guarantee that the cardinalities $|\I\cap S^\prime|$ and $|\J\cap S^\prime|$ do not depend on the sets $S^\prime\in \S^\prime$. 
  From~\eqref{eq:prop-inters7}, the probability $\mathbb{P}(\ourint{S^\prime}{\B}=k)$
  is also independent on $S^\prime\in \S^\prime$ for all fixed $k$, $0\le k\le \tau$. Therefore,
  rewriting~\eqref{eq:2.2.2} we obtain
  \begin{eqnarray}\label{eq:2.2.3}
    \mathbb{E}\left[ \phi\left(H\right) 
    \right] & \le & \sum_{k=0}^{\tau}\mathbb{P}(\ourint{S^\prime}{\B}=k)
    \sum_{S^\prime\in \S^\prime} 
    \mathbb{E}\left[  
    \phi_{S^\prime}(H) \ \bigg| \  \ourint{S^\prime}{\B}=k \right].
  \end{eqnarray}
  
  For $k=0$, and for all $S^\prime\in\S^\prime$, the expected value of $\phi_{S^\prime}$ in~\eqref{eq:2.2.3} may be expressed as
  \begin{eqnarray}\label{eq:2.2.4}
    \mathbb{E}\left[ \phi_{S^\prime}\left(H\right) \ | \   \ourint{S^\prime}{\B}=0 \right] 
    & = & 
    \mathbb{E}\left[ \phi_{S^\prime}\left(\sum\limits_{i\in\sourint{S^\prime}{\B}}
    U_{i}\hbeta{i}\right) \ \bigg| \   \ourint{S^\prime}{\B}=0 \right]  
    \nonumber \\
    & = & 
    \mathbb{E}\left[ \phi_{S^\prime}\left(\sum\limits_{i\in \emptyset} 
    U_{i}\hbeta{i}\right) \right]
    \nonumber \\
    & = & 
    \phi_{S^\prime}( 0)
    \nonumber \\
    & = & f_{S^\prime}(x+0)-f_{S^\prime}(x)-\nabla f_{S^\prime}(x)^T{ 0}
    \nonumber \\
    & = & 0.
  \end{eqnarray}
  
  For each fixed $k$, $1\le k \le \tau$, from the convexity of the 
  function $\phi_{S^\prime}$ we have
  \begin{align}\label{eq:2.2.5}
    \mathbb{E}\left[ \phi_{S^\prime}\left(H\right) \ | \   \ourint{S^\prime}{\B}=k \right] & =   
    \mathbb{E}\left[ \phi_{S^\prime}\left(\dfrac{1}{k}\sum\limits_{i\in\sourint{S^\prime}{\B}}kU_{i}\hbeta{i}\right) \ \bigg| \  \ \ourint{S^\prime}{\B}=k \right]
    \nonumber \\
    & \le  \mathbb{E}\left[\dfrac{1}{k}\sum\limits_{i\in\sourint{S^\prime}{\B}}  
    \phi_{S^\prime}\left(kU_{i}\hbeta{i}\right) \ \bigg| \  \ourint{S^\prime}{\B}=k \right] \nonumber \\
    & = \dfrac{1}{k}\mathbb{E}\left[\sum\limits_{i\in\sourint{S^\prime}{\B}}  
    \phi_{S^\prime}\left(kU_{i}\hbeta{i}\right) \ \bigg| \  \ourint{S^\prime}{\B}=k \right] \nonumber \\
    &=  \dfrac{1}{k}\sum\limits_{i\in S^\prime}  
    \phi_{S^\prime}\left(kU_{i}\hbeta{i}\right)\mathbb{P}(i\in \B \ | \ \ourint{S^\prime}{\B}=k) \nonumber \\
    & \stackrel{\eqref{eq:prop-inters1}}{= }  \dfrac{1}{k}\left[\sum\limits_{i\in S^\prime\cap\I}  
    \phi_{S^\prime}\left(kU_{i}\hbeta{i}\right)\dfrac{k\delta_{DP}}{\delta_{DP}|\I\cap S^\prime|+|\J\cap S^\prime|}+\right. \nonumber \\
    &  \qquad + \left.\sum\limits_{i\in S^\prime\cap\J}  
    \phi_{S^\prime}\left(kU_{i}\hbeta{i}\right)\dfrac{k}{\delta_{DP}|\I\cap S^\prime|+|\J\cap S^\prime|}\right] \nonumber \\
    & =   z\left[\sum\limits_{i\in S^\prime\cap\I}  
    \delta_{DP}\phi_{S^\prime}\left(kU_{i}\hbeta{i}\right)+\sum\limits_{i\in S^\prime\cap\J}  
    \phi_{S^\prime}\left(kU_{i}\hbeta{i}\right)\right],
  \end{align}
  for all $S^\prime\in\S^\prime$ and $z:=\dfrac{1}{\delta_{DP}|\I\cap S^\prime|+|\J\cap S^\prime|}$.
  
  Applying expressions~\eqref{eq:2.2.4} and~\eqref{eq:2.2.5} 
  to \eqref{eq:2.2.3} yields
  \begin{align}\label{eq:2.2.6}
    \mathbb{E}\left[ \phi\left(H\right) 
    \right] & \le  \sum_{k=1}^{\tau}\mathbb{P}(\ourint{S^\prime}{\B}=k)
    \sum_{S^\prime\in \S^\prime} 
    z\left[\sum\limits_{i\in S^\prime\cap\I}  
    \delta_{DP}\phi_{S^\prime}\left(kU_{i}\hbeta{i}\right)+ \right. \nonumber\\
    &\qquad \qquad \qquad \qquad \qquad \qquad + \left. \sum\limits_{i\in S^\prime\cap\J}  
    \phi_{S^\prime}\left(kU_{i}\hbeta{i}\right)\right]\nonumber\\
    & = z\sum_{k=1}^{\tau}\mathbb{P}(\ourint{S^\prime}{\B}=k)
    \left[\sum\limits_{i\in\I}  
    \delta_{DP}\phi\left(kU_{i}\hbeta{i}\right)+\sum\limits_{i\in\J}  
    \phi\left(kU_{i}\hbeta{i}\right)\right]\nonumber\\
    &\le z\sum_{k=1}^{\tau}\mathbb{P}(\ourint{S^\prime}{\B}=k)
    \left[\sum\limits_{i\in\I}  
    \delta_{DP}\dfrac{L_i}{2}\|k\hbeta{i}\|_{(i)}^2+\sum\limits_{i\in\J}  
    \dfrac{L_i}{2}\|k\hbeta{i}\|_{(i)}^2\right]\nonumber\\
    &= z\sum_{k=1}^{\tau}\mathbb{P}(\ourint{S^\prime}{\B}=k) k^2
    \left[\sum\limits_{i\in\I}  
    \delta_{DP}\dfrac{L_i}{2}\|\hbeta{i}\|_{(i)}^2+\sum\limits_{i\in\J}  
    \dfrac{L_i}{2}\|\hbeta{i}\|_{(i)}^2\right].
  \end{align}
  
  Now, from ~\cite[(24)]{RT2016}, it holds
  \begin{eqnarray}\label{eq:2.2.7}
    \mathbb{E}\left[ \left(\ourint{S^\prime}{\B} \right)^2  \right] &=&
    \sum_{k=1}^{\tau}\mathbb{P}(\ourint{S^\prime}{\B}=k) k^2 \nonumber \\
    &=& \tau p_1(\tau p_1+p_2)=\tau p_1((\tau-1) p_1+1),
  \end{eqnarray}
  since $p_2=1-p_1$,  with $p_1$ and $p_2$ as established in Proposition~\ref{prop:probcond}.

  Through the definition of $p_1$ and~\eqref{eq:eqsets}, we obtain
  \begin{equation}\label{eq:2.p1}
  p_1=\dfrac{\delta_{DP}|\I \cap S^\prime| + |\J \cap S^\prime|}{\delta_{DP}|\I |+|\J |}= \dfrac{\delta_{DP}\min\{|\I|,\omega\}+\min\{|\J|,\omega\}}{q}. 
  \end{equation}
  
  Using~\eqref{eq:2.2.7},~\eqref{eq:2.p1}  and $q = \delta_{DP} |\I | + |\J|$, inequality~\eqref{eq:2.2.6} becomes
  \begin{eqnarray}\label{eq:2.2.8}
    \mathbb{E}\left[ \phi\left(H\right)  \right] 
    & \le & z\tau p_1((\tau-1) p_1+1)
    \left[\sum\limits_{i\in\I}  
    \delta_{DP}\dfrac{L_i}{2}\|\hbeta{i}\|_{(i)}^2+\sum\limits_{i\in\J}  
    \dfrac{L_i}{2}\|\hbeta{i}\|_{(i)}^2\right]\nonumber \\
    & = & \dfrac{\tau}{q} ((\tau-1) p_1+1)
    \left[\sum\limits_{i\in\I}  
    \delta_{DP}\dfrac{L_i}{2}\|\hbeta{i}\|_{(i)}^2+\sum\limits_{i\in\J}  
    \dfrac{L_i}{2}\|\hbeta{i}\|_{(i)}^2\right]\nonumber \\
    & \stackrel{\eqref{eq:2.p1}}{=} & \dfrac{\tau \delta_{DP}}{q} \sum\limits_{i\in\I} 
    \dfrac{\beta L_i}{2}\|\hbeta{i}\|^2_{(i)} + \dfrac{\tau}{q} 
    \sum\limits_{i\in\J} \dfrac{\beta L_i}{2}\|\hbeta{i}\|^2_{(i)},
  \end{eqnarray}
  with $\beta=\dfrac{(\tau-1)(\delta_{DP}\min\{|\I|,\omega\}+\min\{|\J|,\omega\} )}
  {q}+1$.
  
  As~\eqref{eq:2.2.8} is exactly inequality~\eqref{eq:2.2.1}, the 
  proof is complete.
\end{proof}

Owing to Lemma~\ref{lem:espPCDM}, an expression for the 
expected decrease of function  $F=f+\psi$ may be attained by 
updating simultaneously $\tau$ blocks of coordinates, with each 
of these blocks obeying the probability distribution described 
in~Algorithm~\ref{alg:ActiveBCDM}.

Let $\B$ be a multiset of independent and identically distributed (i.i.d.) 
random variables, as previously defined,
and let $\hbeta{i}\in\R^{p_i}, i \in [m]$ be blocks of solution vectors
of the subproblems~\eqref{eq:subprobparal}.  The expected value of $F$
may be expressed as
\begin{equation}\label{eq:espvalue1}
  \mathbb{E}\left[F\left(x+
  \sum\limits_{i\in \B}U_{i}\hbeta{i}\right)\right] = 
  \mathbb{E}\left[f\left(x+
  \sum\limits_{i\in \B}U_{i}\hbeta{i}\right)\right] +
  \mathbb{E}\left[\psi\left(x+
  \sum\limits_{i\in \B}U_{i}\hbeta{i}\right)\right].
\end{equation} 

Due to the separable structure of $\psi(x)$, we have
\begin{eqnarray}\label{eq:espvalue2}
  \mathbb{E}\left[\psi\left(x+
  \sum\limits_{i\in \B}U_{i}\hbeta{i}\right)\right] 
  & = & \mathbb{E}\left[\sum\limits_{i\in\B}
  \psi_i(x_{(i)}+\hbeta{i})+\sum\limits_{i\notin\B}
  \psi_i(x_{(i)})\right]\nonumber\\ 
  & = & \mathbb{E}\left[\sum\limits_{i\in\B}
  (\psi_i(x_{(i)}+\hbeta{i})-\psi_i(x_{(i)}))+
  \sum\limits_{i=1}^m \psi_i(x_{(i)})\right]\nonumber\\
  & = & \sum\limits_{i\in\I}
  \dfrac{\tau\delta_{DP}}{q}\psi_i(x_{(i)}+\hbeta{i})+
  \left(1-\dfrac{\tau\delta_{DP}}{q}\right)\psi_i(x_{(i)})+\nonumber\\
  &   & +
  \sum\limits_{i\in\J}\dfrac{\tau}{q}\psi_i(x_{(i)}+\hbeta{i})
  +\left(1-\dfrac{\tau}{q}\right)\psi_i(x_{(i)}).
\end{eqnarray}

Using~\eqref{eq:espdecr-smo} and~\eqref{eq:espvalue2} 
in equality~\eqref{eq:espvalue1}, we obtain
\begin{eqnarray}\label{eq:espvalue3}
  \mathbb{E}\left[F\left(x+
  \sum\limits_{i\in \B}U_{i}\hbeta{i}\right)\right] 
  & \le & F(x) +  \sum\limits_{i\in\I} \dfrac{\tau\delta_{DP}}{q}
  \left( \nabla_{i} f(x)^T\hbeta{i} + 
  \dfrac{\beta L_i}{2}\|\hbeta{i}\|^2_{(i)} + \right.\nonumber\\
  & & + \left. \psi_i(x_{(i)}+\hbeta{i}) - \psi_i(x_{(i)}) 
  \right) + \sum\limits_{i\in\J} \dfrac{\tau}{q}
  \left( \nabla_{i} f(x)^T\hbeta{i} + \right. \nonumber\\
  & & + \left. \dfrac{\beta L_i}{2}\|\hbeta{i}\|^2_{(i)}
  + \psi_i(x_{(i)}+\hbeta{i}) - \psi_i(x_{(i)}) \right).
\end{eqnarray}
This ensures that solving $\tau$ subproblems~\eqref{eq:subprobparal} from a
fixed vector $x\in\X$, as previously described, the direction $\sum_{i\in
\B}U_{i}\hbeta{i}(x)$ is expected to induce a decrease in $F$. Thus, we have
shown to be theoretically sound to base an algorithm on this type of updating
to compute descent directions for problem~\eqref{eq:mainprob}. 

\change{This leads directly to} Algorithm~\ref{alg:ActivePCDM} (Parallel Active
BCDM). \change{It is} a parallel version of Algorithm~\ref{alg:ActiveBCDM},
supported by the theoretical development of this subsection. Note that the
algorithm does not depend on the specific sets in the partial separability
decomposition of the objective $\S$. It only depends on the partial
separability degree of $f$ and the sizes of the sets $\I$ and $\J$. This
explains the formula for $\beta$ appearing in the method.

\change{Algorithm~\ref{alg:ActivePCDM} also uses other
parameters as inputs. The parameter $\tau$ is the number of threads available
for the method; $\delta_{F}$ controls the number of iterations the algorithm
performs before reevaluating the identification function. Furthermore,
$\delta_{DP}$ determines how many times the probability of selecting inactive
blocks should be higher than that of active blocks.}

\change{In the outer loop (starting at line 7 of the algorithm), it
selects $\tau$ coordinate blocks according to the probability distribution
$\mathbb{P}(i)$ and update their coordinates in parallel using our proposed
descent direction. At the end of each cycle of $\gamma$ parallel iterations,
the sets of active and inactive coordinate blocks are updated using the
identification function (as shown in lines 15 and 16 of the algorithm).}

\begin{algorithm}[!ht]
  \caption{Parallel BCDM with Identification of Active Variables (Parallel Active BCDM)}
  \label{alg:ActivePCDM}
  
  \begin{algorithmic}[1]
    
    \State Choose an initial point $l\le x^0 \le u$, \change{a block separation of $\psi$ function described as $x = \sum_{i=1}^{m}U_ix_{(i)}$,}  an initial cycle size
    $c_0$, the parameters $\ell_{\max} \in \mathbb{N}$,
    $\varepsilon\in\R_{+}$, $\omega$ the partial separable degree of $f$, the number of threads $\tau$ and two natural numbers
    $\delta_F, \delta_{DP}$. Initialize a vector $v\in\mathbb {R}^n$ with
    $2\varepsilon$ in all positions, the sets $\mathcal{I}=[m]$,
    $\mathcal{J}=\emptyset$, the initial cycle size $c_s=c_0$, and the
    counters $\ell, j = 0$.
    \State \change{Calculate the Lipschitz constants of the gradient by blocks satisfying~\eqref{eq:lipcondblk}.}
    \Repeat
    \State $q=\delta_{DP}|\I|+|\J|;$
    \State $\gamma \longleftarrow \lfloor\frac{c_s}{\tau}\rfloor;$
    \State $\beta=\dfrac{(\tau-1)(\delta_{DP}\min\{|\I|,\omega\}+\min\{|\J|,\omega\})}{q}+1$,
  with $q=\delta_{DP}|\mathcal{I}| + |\mathcal{J}|$; 
    \For {$k = (j - 1) \gamma +1, \ldots, j \gamma$}
    
    \parState{Choose a multiset $\B^k\subset [m]$ 
    of $\tau$ blocks of coordinates, where each element of 
    $\B^k$ satisfies the probability distribution
    $$
    \mathbb{P}(i) = \begin{cases}
      \dfrac{\delta_{DP}}{\delta_{DP}|\mathcal{I}| + |\mathcal{J}|}, &
      \text{if}\ i \in \mathcal{I},\\[12pt]
      \dfrac{1}{\delta_{DP}|\mathcal{I}|+ |\mathcal{J}|},  & \text{if}\ i \in
      \mathcal{J}.
    \end{cases}
    $$
    }
    \ParFor {$i \in \B^k$} 
    \State Find $\hbeta{i} \equiv \hbeta{i}(x^k)$, a solution to the
    subproblem \eqref{eq:subprobparal}
    \State Set $x^{k+1} = x^k + U_i\hbeta{i}$ and
    $v_{(i)}=\hbeta{i}$.
    \EndParFor
    \EndFor
    
    \State Set $j = j + 1$   and $\ell = \ell + \gamma\tau$.
    
    \parState{Obtain the set
    $\mathcal{C}(x^{\ell})\subset [m]$, where
    \begin{equation*}
      \mathcal{C}(x^{\ell }) = \left \{ i \ | \ g_j(x^{\ell }) \ge
      \rho_{\alpha}(x^{\ell }), \forall x_j^{\ell} \
      \mbox{s.t.}\  j\  \mbox{belongs to the $i$th-block} \right \},
    \end{equation*}
      \change{with the
    identification function $\rho_{\alpha}(x)$ described in~\eqref{eq:funident}.}
    }
    
    \State Define $\mathcal{J} \subset \mathcal{C}(x^{\ell})$ and
    $\mathcal{I} = [m] \setminus \mathcal{J}$. Set  $c_s = \max\{\min\{ \delta_F |\mathcal{I}|,m \}, c_0\}$.
    \Until $\|v\|\le \varepsilon$ or $\ell \, \ge \, \ell_{\max}$
  \end{algorithmic}
\end{algorithm}
 
\selectlanguage{english}

\subsection{\change{Convergence results}}
\label{sub:convergence}

\change{Using the notation above, the global
convergence and the complexity analysis} of the
Algorithm~\ref{alg:ActivePCDM} 
\change{are obtained as follows.}

\change{\begin{definition}\label{def:func-G}
    Let $Q(h,x,\gamma):\R^n\times \R^n \times \R_+ \rightarrow \R$ and $G(h,x,\gamma):\R^n\times \R^n \times \R_+ \rightarrow \R$ be the functions
\[ Q(h,x,\gamma) :=  f(x) + \nabla f(x)^Th + \dfrac{\gamma}{2}
    \|h\|^2_2 + \psi(x+h)\]
    and
\[ G(h,x,\gamma) :=  \nabla f(x)^Th + \dfrac{\gamma}{2}
    \|h\|^2_2 + \psi(x+h) - \psi(h). \]
    Additionally, we define $G_i(h_i,x,\gamma):\R^{p_i}\times \R^{n} \times \R_+ \rightarrow \R$ as
\[ G_i(h_i,x,\gamma) :=  \nabla_i f(x)^Th_i + \dfrac{\gamma}{2}
    \|h_i\|^2_2 + \psi_i(x_{(i)}+h_i) -\psi_i(x_{(i)}) \]
    and, for $i \in [m]$.
\end{definition}}

\change{Notice \change{that with this definition}
\begin{equation}\label{eq:sivector}
        \begin{aligned}
            h_i^{\beta}(x)=\argmin_{h_i \in \R^{p_i}} &\quad G_i(h_i,x,\beta) \\
            \st        &\quad l_i \le x_{(i)}+h_i \le u_i
        \end{aligned}    
\end{equation}
and
\begin{equation}\label{eq:sdx}
    \begin{aligned}
        h^{\beta}(x)=\argmin_{h \in \R^{n}} &\quad G(h,x,\beta).\\
        \st        &\quad l \le x+h \le u
    \end{aligned}    
\end{equation}}

\begin{definition}\label{def:statonarity}
    \change{$\X^*$ is the set of minimizers of problem~\eqref{eq:mainprob}}.
\end{definition}

\begin{assumption}\label{ass:lim-func}
    The set $\ X^*$ is not empty, and the minimum value of the problem is
$F^*=\min_{x\in\X} F(x)$.
\end{assumption}

The error bound condition stated next comes from~\cite[EB condition
(47)]{Tseng10}. This condition can be verified in some situations. \change{For} example,
when $f$ is strongly convex and has Lipschitz continuous gradient. Another
is whenever $f$ is a quadratic function (even nonconvex) and $\psi$ is a
polyhedral function, see~\cite{Tseng10,TY08} for more examples.

\begin{assumption}\label{ass:error-bound} 
    
    Let $\varrho \ge F^*$, suppose there exist $\epsilon_1,\epsilon_2>0$ such
    that
    \change{\[ 
        dist(x,\X^*) \le \epsilon_2 \| h (x) \|, \ \text{whenever} \ 
        F(x)\le\varrho, \ \| h(x)\|\le \epsilon_1,
    \]
    where $dist(x,\X^*)=\min_{y\in \X^*} \|x-y\|$ and $h(x)$ is the solution of~\eqref{eq:probsomab=1}}.
\end{assumption} 

\begin{assumption}\label{ass:lips-contin}
    The gradient of $f$ is globally Lipschitz continuous with respect 
    to the Euclidean norm, i.e., there exists $L_f>0$ such that
\[ \|\nabla f(y) - \nabla f(x)\| \le L_f\|y-x\|, 
    \ \forall x, y \in \R^n.  \]
\end{assumption}

A classical consequence of the last assumption is described next.

\begin{corollary} \label{cor:quad-sup-bound} 
    \cite[Lemma 4.1.12]{Dennis1996} 
    Under Assumption~\ref{ass:lips-contin},
    \[ 
        |f(y) - f(x) - \nabla f(x)^T(y-x)| \le \dfrac{L_f}{2}
        \|y-x\|^2. 
    \]
\end{corollary}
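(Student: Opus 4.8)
The plan is to reduce the statement to the fundamental theorem of calculus applied along the segment joining $x$ and $y$, and then to exploit the Lipschitz bound of Assumption~\ref{ass:lips-contin} inside the resulting integral. First I would introduce the scalar function $\phi:[0,1]\to\R$ defined by $\phi(t):=f(x+t(y-x))$, which is continuously differentiable because $\nabla f$ exists everywhere by Assumption~\ref{ass:lips-contin}, with derivative $\phi'(t)=\nabla f(x+t(y-x))^T(y-x)$. Integrating over $[0,1]$ gives
\[
f(y)-f(x)=\phi(1)-\phi(0)=\int_0^1\nabla f(x+t(y-x))^T(y-x)\,dt.
\]

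The next step is to subtract the anchored linear term. Since $\nabla f(x)^T(y-x)=\int_0^1\nabla f(x)^T(y-x)\,dt$, I would write
\[
f(y)-f(x)-\nabla f(x)^T(y-x)=\int_0^1\bigl(\nabla f(x+t(y-x))-\nabla f(x)\bigr)^T(y-x)\,dt.
\]
Taking absolute values, moving the modulus inside the integral, and applying the Cauchy--Schwarz inequality to each integrand yields the bound $\int_0^1\|\nabla f(x+t(y-x))-\nabla f(x)\|\,\|y-x\|\,dt$.

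Finally, the Lipschitz condition of Assumption~\ref{ass:lips-contin} controls the inner norm, since $\|\nabla f(x+t(y-x))-\nabla f(x)\|\le L_f\|t(y-x)\|=L_f\,t\,\|y-x\|$. Substituting this and computing the elementary integral $\int_0^1 t\,dt=\tfrac12$ produces exactly $\tfrac{L_f}{2}\|y-x\|^2$, which is the claimed estimate.

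Since this is the standard descent lemma, I do not anticipate any serious obstacle; it is a textbook-level calculation. The only points requiring minimal care are justifying the differentiability of $\phi$ and the interchange of the absolute value with the integral, both immediate in this smooth setting, and keeping track of the extra factor $t$ coming from the Lipschitz bound so that the final constant is $L_f/2$ rather than $L_f$.
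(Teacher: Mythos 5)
Your proof is correct and is exactly the standard argument for this result: the paper itself offers no proof, deferring to the cited reference (Dennis and Schnabel, Lemma 4.1.12), whose proof is precisely the fundamental-theorem-of-calculus-plus-Cauchy--Schwarz-plus-Lipschitz computation you describe, including the factor $t$ that yields the constant $L_f/2$. Nothing is missing.
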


\begin{corollary}\label{cor:stationarity}
    A point $x^*\in\X^*$ if, and only if, \change{$h_i(x^*) = 0$} for all $i \in [m]$,
    with \change{$h(x)$ the solution of~\eqref{eq:probsomab=1}}.
\end{corollary}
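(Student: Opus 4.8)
The plan is to reduce the stated equivalence to a blockwise condition and then handle each block with the single-block descent bound coming from~\eqref{eq:lipcondblk}, together with the fact that stationarity and global optimality coincide for the convex problem~\eqref{eq:mainprob}. First I would record the observation already made after~\eqref{eq:probsomab=1}: since $\psi$ is block separable and $\X$ is a product of intervals, the objective of~\eqref{eq:probsomab=1} decouples, so the $i$th block of $h(x^*)$ is exactly the minimizer $h_i(x^*)$ of the subproblem~\eqref{eq:subprob}. Hence ``$h_i(x^*)=0$ for all $i$'' is the same as ``$h(x^*)=0$.'' For each fixed block I would write $m_i(h_i):=\nabla_i f(x^*)^T h_i + \tfrac{L_i}{2}\|h_i\|_{(i)}^2 + \psi_i(x^*_{(i)}+h_i)-\psi_i(x^*_{(i)})$ for the objective of~\eqref{eq:subprob}, and note that $m_i$ is strongly convex (because $B_i\succ 0$, $L_i>0$, and $\psi_i$ is convex), so it has a unique constrained minimizer $h_i(x^*)$ and satisfies $m_i(0)=0$.

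For the direction $x^*\in\X^*\Rightarrow h_i(x^*)=0$, I would argue by contradiction using descent on a single block. The block Lipschitz inequality~\eqref{eq:lipcondblk} gives the one-block bound $F(x^*+U_i h_i)\le F(x^*)+m_i(h_i)$ for every $h_i$ with $x^*+U_ih_i$ feasible, because only block $i$ moves and $\psi$ is separable. If some $h_i(x^*)\neq 0$, strong convexity forces $m_i(h_i(x^*))<m_i(0)=0$, so $x^*+U_ih_i(x^*)$ is a feasible point with strictly smaller objective value, contradicting $x^*\in\X^*$. Thus $h_i(x^*)=0$ for every $i$.

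For the converse I would suppose $h_i(x^*)=0$ for all $i$, i.e. $0$ minimizes each $m_i$, and deduce global optimality directly from convexity. Fix any feasible $y\in\X$ and set $d_i:=y_{(i)}-x^*_{(i)}$; since the box is convex, $x^*_{(i)}+t d_i$ is feasible for $t\in[0,1]$, so $m_i(td_i)\ge m_i(0)=0$. Dividing by $t$ and letting $t\to 0^+$ kills the quadratic term and leaves $\nabla_i f(x^*)^T d_i + \psi_i'(x^*_{(i)};d_i)\ge 0$, where $\psi_i'$ is the one-sided directional derivative; convexity of $\psi_i$ then upgrades this to $\nabla_i f(x^*)^T d_i + \psi_i(y_{(i)})-\psi_i(x^*_{(i)})\ge 0$. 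Summing over $i$ yields $\nabla f(x^*)^T(y-x^*)+\psi(y)-\psi(x^*)\ge 0$, and adding the convexity estimate $f(y)\ge f(x^*)+\nabla f(x^*)^T(y-x^*)$ gives $F(y)\ge F(x^*)$. As $y\in\X$ is arbitrary, $x^*\in\X^*$.

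The block descent bound and the convexity manipulations are routine. The step needing the most care is the handling of the nonsmooth $\psi_i$: in the converse I must peel the quadratic regularizer off at $h_i=0$ (via the $t\to0^+$ scaling, equivalently a subdifferential/normal-cone argument) to expose the exact first-order information, and in the forward direction I must use strong convexity to guarantee \emph{uniqueness} of the minimizer, which is precisely what turns ``$0$ is a minimizer of $m_i$'' into the sharp statement ``$h_i(x^*)=0$.'' In essence this argument reproduces \cite[Lemma~2]{LSS19} and combines it with the fact that for the convex problem~\eqref{eq:mainprob} a stationary point is a global minimizer.
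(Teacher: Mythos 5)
Your proof is correct, and it is essentially the paper's argument: the paper disposes of this corollary by citing \cite[Lemma 2]{LSS19} directly, and what you have written is a self-contained reconstruction of that lemma (blockwise decoupling of~\eqref{eq:probsomab=1}, the single-block descent bound from~\eqref{eq:lipcondblk} plus strong convexity for the forward direction, and the $t\to 0^+$ first-order argument plus convexity of $f$ and $\psi$ for the converse). No gaps; you have simply made explicit what the paper delegates to the reference.
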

\begin{proof}
    Follows as an immediate consequence of~\cite[Lemma 2]{LSS19}.
\end{proof}

We now present some auxiliary results that will be useful \change{below.}

\begin{lemma}\label{lem:pre-converg1}
    Let $\{x^k\}$ be the sequence generated by Algorithm~\ref{alg:ActivePCDM}
    (Parallel Active BCDM) and $\B^k \subset [m]$ be a multiset of randomly
    generated i.i.d. indices that controls the choice of $\tau$ blocks of
    coordinates at the $k$-th iteration of the method, where each element of
    the multiset $\B^k$ \change{is} chosen by the probability distribution of
    Algorithm~\ref{alg:ActivePCDM}. Then,
    \change{\[ 
        F(x^k) - \mathbb{E}\left[F(x^{k+1})| \B^k \right] 
        \ge \dfrac{1}{2m\delta_{DP}}
        \|h(x^k)\|^2,  
    \] }
    where 
    $\mathbb{E}\left[F(x^{k+1})| \B^k\right]   =  
    \mathbb{E}\left[F\left(x^k+\sum\limits_{i\in\B^k} 
    U_i\hbetak{i}(x^k) \right) \right]$, 
with $\hbetak{i}(x^k)$ solution of~\eqref{eq:subprobparal}, $i\in [m]$,
    and \change{$h(x)$ the solution of~\eqref{eq:probsomab=1}}.
\end{lemma}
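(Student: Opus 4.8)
The plan is to start from the one-step expected-decrease estimate already established in Lemma~\ref{lem:espPCDM} and its consequence~\eqref{eq:espvalue3}, and then to bound the right-hand side from below by a multiple of $\|h(x^k)\|^2$. Conditioning on $\B^k$ and recognizing that the bracketed terms in~\eqref{eq:espvalue3} are exactly the per-block models $G_i(\hbeta{i},x^k,\beta)$ of Definition~\ref{def:func-G} (with $\hbeta{i}$ solving~\eqref{eq:subprobparal}), that inequality rearranges into
\[
F(x^k) - \mathbb{E}\left[F(x^{k+1}) \mid \B^k\right] \ge \sum_{i\in\I}\frac{\tau\delta_{DP}}{q}\bigl(-G_i(\hbeta{i},x^k,\beta)\bigr) + \sum_{i\in\J}\frac{\tau}{q}\bigl(-G_i(\hbeta{i},x^k,\beta)\bigr).
\]
Since $h=0$ is feasible and $G_i(0,x^k,\beta)=0$, each summand is nonnegative, and the whole task reduces to bounding a single term $-G_i(\hbeta{i},x^k,\beta)$ from below by $\|h_i(x^k)\|^2$, where $h_i(x^k)$ is the $i$-th block of the reference direction $h(x^k)$ solving~\eqref{eq:probsomab=1} (equivalently, the minimizer of $G_i(\cdot,x^k,1)$ over the box).

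The per-block bound I would obtain in two steps. First, as $G_i(\cdot,x^k,\beta)$ is $\beta$-strongly convex with constrained minimizer $\hbeta{i}$, evaluating the strong-convexity inequality at the feasible point $0$ (where the model vanishes) gives $-G_i(\hbeta{i},x^k,\beta)\ge\frac{\beta}{2}\|\hbeta{i}\|^2$. Second, I would compare the two minimizers $\hbeta{i}$ and $h_i(x^k)$, which solve the same model with curvature penalties $\beta$ and $1$. Writing the optimality (variational) inequalities of the two constrained problems, testing each against the other's solution, adding them, and using the monotonicity of $\partial\psi_i$ to discard the subgradient cross term, one arrives at $\langle h_i(x^k)-\beta\hbeta{i},\,\hbeta{i}-h_i(x^k)\rangle\ge0$. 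Setting $r=\|\hbeta{i}\|/\|h_i(x^k)\|$ and applying Cauchy--Schwarz turns this into $(\beta r-1)(r-1)\le0$, whence $r\ge1/\beta$, i.e. $\|\hbeta{i}\|\ge\frac1\beta\|h_i(x^k)\|$. Combining the two steps yields $-G_i(\hbeta{i},x^k,\beta)\ge\frac{\beta}{2}\cdot\frac{1}{\beta^2}\|h_i(x^k)\|^2=\frac{1}{2\beta}\|h_i(x^k)\|^2$.

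It then remains to collect constants. Using $\delta_{DP}\ge1$ to absorb the $\delta_{DP}$ weighting on $\I$ and merge both sums gives
\[
F(x^k)-\mathbb{E}\left[F(x^{k+1})\mid\B^k\right]\ge\frac{\tau}{2\beta q}\sum_{i=1}^m\|h_i(x^k)\|^2=\frac{\tau}{2\beta q}\|h(x^k)\|^2.
\]
Finally, from $\min\{|\I|,\omega\}\le|\I|$ and $\min\{|\J|,\omega\}\le|\J|$ the numerator in the definition of $\beta$ is at most $q$, so $\beta\le\tau$; and $q=\delta_{DP}|\I|+|\J|\le\delta_{DP}(|\I|+|\J|)=\delta_{DP}m$. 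Hence $\frac{\tau}{2\beta q}\ge\frac{\tau}{2\tau\,\delta_{DP}m}=\frac{1}{2m\delta_{DP}}$, which is the claimed inequality.

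I expect the crux of the argument to be the second step, the comparison of the two proximal-type steps taken with different curvature penalties. Establishing $\|\hbeta{i}\|\ge\frac1\beta\|h_i(x^k)\|$ cleanly requires handling the nonsmooth term $\psi_i$ and the box constraints simultaneously through their optimality conditions and the monotonicity of $\partial\psi_i$; by contrast, the strong-convexity lower bound and the final constant-chasing with $\beta\le\tau$ and $q\le\delta_{DP}m$ are routine.
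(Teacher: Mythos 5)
Your proposal is correct, and it reaches the stated bound by a genuinely different route from the paper. The paper does not work per block: after the same starting point (the rearrangement of~\eqref{eq:espvalue3} into the models $G_i$), it replaces every block model by a single worst-case Euclidean model with curvature $\overline{\beta}=\beta_{\max}\lambda_{\max}L_{\max}$ (absorbing the block metrics $L_iB_i$ via $\lambda_{\max}$ and $L_{\max}$), applies strong convexity to the aggregated $Q(\cdot,x^k,\overline{\beta})$ to get $\tfrac{\overline{\beta}}{2m\delta_{DP}}\|h^{\overline{\beta}}(x^k)\|^2$, and then invokes the cited inequality $\overline{\beta}\|h^{\overline{\beta}}(x^k)\|^2\ge\|h(x^k)\|^2$ from \cite[Lemma 4]{PN15}, under the assumption $\overline{\beta}\ge 1$ made without loss of generality. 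You instead keep the argument blockwise, prove the minimizer comparison $\|\hbeta{i}\|\ge\tfrac{1}{\beta}\|h_i(x^k)\|$ from scratch by adding the two variational inequalities and using monotonicity of $\partial\psi_i$ (the factorization $(r-1)(\beta r-1)\le 0$ is valid), and recover the constant from $\beta\le\tau$ and $q\le m\delta_{DP}$. What your route buys: it is self-contained (no appeal to \cite{PN15}), it needs no worst-case uniformization and no ``increase the $L_i$'' convention since $\beta\ge 1$ holds automatically from its formula, and although your per-block bound $-G_i\ge\tfrac{1}{2\beta}\|h_i(x^k)\|^2$ is weaker than the $\tfrac12\|h_i(x^k)\|^2$ the paper effectively obtains, you compensate by retaining the sharper weight $\tau/q$ rather than coarsening it to $1/(m\delta_{DP})$ at the outset, so both arrive at the same constant. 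What the paper's route buys is the explicit constant $\overline{\beta}$ of~\eqref{eq:betabarra}, which is reused verbatim in Theorem~\ref{teo:local-converg}; adopting your proof would require restating that rate in terms of $\beta$ or $\tau$. One caveat applies equally to both arguments: the subproblems~\eqref{eq:subprobparal} and~\eqref{eq:probsomab=1} penalize curvature in the scaled block norms $L_i\|\cdot\|_{(i)}$ while the conclusion is stated in the Euclidean norm, so your strong-convexity and Cauchy--Schwarz steps should be carried out in the metric $L_iB_i$ (where the two models differ by exactly the scalar $\beta$) and then converted, at the cost of the same eigenvalue factors the paper hides in $\overline{\beta}$ and its ``without loss of generality'' remark; this is bookkeeping, not a gap in the idea.
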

\begin{proof}
    Due to the dynamics of Algorithm~\ref{alg:ActivePCDM} and
    Equation~\eqref{eq:espvalue3}, we have
    \change{\begin{eqnarray}\label{eq:lem1-eq1}
        \mathbb{E}\left[F(x^{k+1})| \B^k\right]  
        & \le & F(x^k) +  \sum\limits_{i\in\I} \dfrac{\tau\delta_{DP}}{q}
        G_i(\hbetak{i}(x^k),x^k,\beta_k) + \nonumber\\
        &    & + \sum\limits_{i\in\J} 
        \dfrac{\tau}{q} G_i(\hbetak{i}(x^k),x^k,\beta_k)\nonumber\\
        & \le & F(x^k) +  \sum\limits_{i=1}^m \dfrac{\tau\delta_{DP}}{q}
        G_i(\hbetak{i}(x^k),x^k,\beta_k).
    \end{eqnarray}}
    
    First, we use the fact that $|\I|+|\J|=m$, despite of the change of the
    sets $\I$ and $\J$  along the iterations. Second, since
    \change{$G_i(\hbetak{i}(x^k),x^k,\beta_k) \le G_i(h_i^{L_i\beta_k}(x^k),x^k,L_i\beta_k)\le
    0$, where $\hbetak{i}$ and $h_i^{L_i\beta_k}(x^k)$ 
    are the minimizers of the
    subproblem~\eqref{eq:sivector}, for all $i \in [m]$, each one with your specific $\beta$. Putting all this together, we get
    \begin{eqnarray}\label{eq:lem1-eq1.2}
        &&\dfrac{\tau\delta_{DP}}{q} \ge \dfrac{1}{m\delta_{DP}} \nonumber\\
        &\Leftrightarrow& \dfrac{\tau\delta_{DP}}{q}G_i(\hbetak{i}(x^k),x^k,\beta_k) \le
        \dfrac{1}{m\delta_{DP}}G_i(h_i^{L_i\beta_k}(x^k),x^k,L_i\beta_k)
        \nonumber\\
        &\Leftrightarrow& \dfrac{\tau\delta_{DP}}{q}G_i(\hbetak{i}(x^k),x^k,\beta_k) \le
        \dfrac{1}{m\delta_{DP}}G_i(h_i^{\overline{\beta}}(x^k),x^k,
        \overline{\beta}), 
    \end{eqnarray}}
    with 
    \begin{equation}\label{eq:betabarra}
        \overline{\beta}:=\beta_{\max}\lambda_{\max}L_{\max},
    \end{equation}
    where $\beta_{\max}=2(\tau-1)\omega(\delta_{DP}+1)/m$, $\lambda_{\max}$ is the largest eigenvalue of the matrix $B\in\R^{n\times n}$, and $L_{\max}=\max_{1\le i \le m}\{ L_i\}$. 
    
    Through the expression~\eqref{eq:lem1-eq1.2}, the inequality~\eqref{eq:lem1-eq1} admits the 
    following upper bound
   \change{ \begin{align}\label{eq:lem1-eq2}
        \mathbb{E}\left[F(x^{k+1})| \B^k\right] 
        & \stackrel{\eqref{eq:lem1-eq1.2}}{\le}  F(x^k) +  \dfrac{1}{m\delta_{DP}}\sum\limits_{i=1}^m G_i(h_i^{\overline{\beta}}(x^k),x^k,\overline{\beta}) \nonumber \\
        & =  F(x^k) +  \dfrac{1}{m\delta_{DP}}G(h^{\overline{\beta}}(x^k),x^k,\overline{\beta})\nonumber\\
        & =  \left(1-\dfrac{1}{m\delta_{DP}}\right)F(x^k) +  \dfrac{1}{m\delta_{DP}}\left( Q(h^{\overline{\beta}}(x^k),x^k,\overline{\beta})\right).
    \end{align}}

    From the relationship~\eqref{eq:lem1-eq2} and because the function
    $Q(\cdot,x^k,\overline{\beta})$ is strongly convex with respect to the
    norm $\|\cdot\|_2$, it holds that
       \change{\begin{eqnarray}\label{eq:lem1-eq3}
        F(x^k) - \mathbb{E}\left[F(x^{k+1})| \B^k\right] 
        &\ge&  \dfrac{1}{m\delta_{DP}} F(x^k) - 
        \dfrac{1}{m\delta_{DP}}Q(h^{\overline{\beta}}(x^k),x^k,
        \overline{\beta})\nonumber\\
        &  =  &  \dfrac{1}{m\delta_{DP}} 
        (Q(0,x^k,\overline{\beta}) - Q(h^{\overline{\beta}}(x^k),x^k,
        \overline{\beta})\nonumber\\
        & \ge  &   \dfrac{\overline{\beta}}{2m\delta_{DP}} 
        \|h^{\overline{\beta}}(x^k)\|^2.
    \end{eqnarray}}
    
    Assuming $\overline{\beta}\ge 1$, which may be achieved without loss of
    generality by increasing the Lipschitz constants $L_i$ or choosing
    conveniently the matrix $B$, and using~\cite[Lemma 4]{PN15}, we see that
       \change{\begin{equation}\label{eq:lem1-eq4}
        \overline{\beta}\|h^{\overline{\beta}}(x^k)\|^2\ge \|h(x^k)\|^2.
    \end{equation}}
    
    Putting the expression~\eqref{eq:lem1-eq3} and~\eqref{eq:lem1-eq4}
    together concludes the proof.
\end{proof}

\begin{lemma}\label{lem:pre-converg2} 
    Let $x\notin \X^*$. Then, there exist $\gamma,\delta>0$ such that, for all
    feasible $y \in \mathbb{B}(x,\delta)$, we get  \change{$\|h(y)\| \ge \gamma
    \|h(x)\|$, with $h(\cdot)$ the solution of~\eqref{eq:probsomab=1}}.
\end{lemma}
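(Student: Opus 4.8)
The plan is to reduce the statement to the continuity of the solution map $y \mapsto h(y)$ at the point $x$. Since $x \notin \X^*$, Corollary~\ref{cor:stationarity} guarantees that $h(x) \neq 0$, hence $\|h(x)\| > 0$. The inequality to be proved is then purely local: we only need $\|h(\cdot)\|$ to stay above a fixed fraction of $\|h(x)\|$ on a small enough ball around $x$. If $h(\cdot)$ is continuous at $x$, this is immediate, so the real content of the lemma is the continuity of that map.

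The heart of the argument is therefore to establish that $y \mapsto h(y)$ is continuous, and I would do this by rewriting $h(y)$ as a proximal-gradient step. Setting $z = y + h$ and $B := \diag(L_1 B_1, \ldots, L_m B_m)$, problem~\eqref{eq:probsomab=1} becomes, after completing the square, the minimization over $z \in \X$ of $\tfrac{1}{2}\|z - (y - B^{-1}\nabla f(y))\|_B^2 + \psi(z)$ (the discarded terms are constant in $z$). Hence, writing $T(y) := y + h(y)$,
\[
  T(y) = \pr_{\psi + \iota_{\X}}^{B}\bigl(y - B^{-1}\nabla f(y)\bigr),
\]
where $\pr_{g}^{B}(w) := \argmin_{z} \bigl\{ \tfrac{1}{2}\|z - w\|_B^2 + g(z)\bigr\}$ and $\iota_{\X}$ is the indicator of the closed convex box $\X$. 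Because $\psi + \iota_{\X}$ is proper, closed and convex, this proximal operator is single-valued and nonexpansive with respect to $\|\cdot\|_B$, while $y \mapsto y - B^{-1}\nabla f(y)$ is continuous since $\nabla f$ is continuous (Assumption~\ref{ass:lips-contin}). As a composition of a continuous map with a nonexpansive one, $T$ is continuous, and therefore so is $h(y) = T(y) - y$.

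With continuity in hand the conclusion is routine. Taking $\varepsilon = \tfrac{1}{2}\|h(x)\| > 0$, continuity of $h(\cdot)$ at $x$ yields $\delta > 0$ such that $\|h(y) - h(x)\| \le \tfrac{1}{2}\|h(x)\|$ whenever $y \in \mathbb{B}(x,\delta)$. The reverse triangle inequality then gives $\|h(y)\| \ge \|h(x)\| - \|h(y) - h(x)\| \ge \tfrac{1}{2}\|h(x)\|$, so the statement holds with $\gamma = \tfrac{1}{2}$ (any $\gamma \in (0,1)$ works after shrinking $\delta$).

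I expect the main obstacle to be the rigorous justification of the continuity of $h(\cdot)$. A direct parametric-optimization argument (say, Berge's maximum theorem combined with uniqueness of the strongly convex minimizer) would force one to control simultaneously the moving box constraint $\{h : l \le y + h \le u\}$ and the varying linear term $\nabla f(y)$. The proximal reformulation above is precisely what sidesteps these technicalities, since nonexpansiveness of $\pr^{B}_{\psi + \iota_{\X}}$ already encodes the joint stability of the solution. The only points that need careful checking there are the completion-of-square identity and the correct absorption of the box constraint into $\iota_{\X}$ under the $B$-metric, both of which are elementary.
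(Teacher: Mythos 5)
Your proof is correct, but it is worth noting that the paper itself gives no argument here: its entire proof of Lemma~\ref{lem:pre-converg2} is the sentence ``just apply the same arguments of \cite[Lemma 4]{LSS19}'', so you are supplying details that the authors outsourced to the earlier reference. Your route --- rewriting $x+h(x)$ as $\pr^{B}_{\psi+\iota_{\X}}\bigl(x - B^{-1}\nabla f(x)\bigr)$, invoking single-valuedness and nonexpansiveness of this proximal map in the $\|\cdot\|_{B}$ metric together with continuity of $\nabla f$, and then concluding from $\|h(x)\|>0$ (Corollary~\ref{cor:stationarity}) via the reverse triangle inequality --- is sound: the completion of the square is exact because $\|h_i\|_{(i)}^2 = h_i^{T}B_ih_i$ makes the quadratic term precisely $\tfrac12 h^{T}Bh$, the moving box constraint is correctly absorbed into $\iota_{\X}$ after the change of variable $z=x+h$, and nonexpansiveness of $\pr^{B}_{g}$ for proper closed convex $g$ follows from monotonicity of $\partial g$ exactly as in the Euclidean case (and continuity in $\|\cdot\|_B$ transfers to the Euclidean norm by norm equivalence). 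What your approach buys over the paper's one-line citation is a self-contained, checkable justification of the continuity of $y\mapsto h(y)$ that sidesteps a parametric-optimization argument over a $y$-dependent feasible set. The only point you should make explicit is that $x$ is assumed feasible, which is needed for Corollary~\ref{cor:stationarity} to yield $h(x)\neq 0$ and is clearly what the statement intends, since the lemma is applied to iterates of the algorithm.
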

\begin{proof}
    Just apply the same arguments of~\cite[Lemma 4]{LSS19}. 
\end{proof}

\change{The following result establishes a global convergence property for Algorithm~\ref{alg:ActivePCDM}. We show that every sequence generated by the algorithm converges to a point $x^* \in \X$ by applying Corollary~\ref{cor:stationarity}, thus guaranteeing that $h_i(x^*) = 0$ for all $i \in [m]$, which is equivalent to $h(x^*)=\sum_{i=1}^m U_ih_i(x^*)=0$.}

\begin{theorem}\label{lem:pre-converg3}
    Let $\{x^k\}$ be an infinite sequence generated by
    Algorithm~\ref{alg:ActivePCDM} (Parallel Active BCDM) under the hypotheses of Lemma~\ref{lem:pre-converg1}. Then, \change{$\|h(x^k)\|\rightarrow
    0$, with $h(\cdot)$ the solution of~\eqref{eq:probsomab=1}}.
\end{theorem}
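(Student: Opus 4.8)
The plan is to establish almost-sure convergence of $\|h(x^k)\|$ to zero by combining the per-iteration expected-decrease estimate of Lemma~\ref{lem:pre-converg1} with a supermartingale (or summability) argument, and then to upgrade a subsequential statement into the full-sequence statement using the local lower bound on $\|h(\cdot)\|$ provided by Lemma~\ref{lem:pre-converg2}. Concretely, I would first take total expectations in Lemma~\ref{lem:pre-converg1}, yielding
\[
    \mathbb{E}[F(x^k)] - \mathbb{E}[F(x^{k+1})] \ge \frac{1}{2m\delta_{DP}}\,\mathbb{E}\big[\|h(x^k)\|^2\big].
\]
Since $F$ is bounded below by $F^*$ (Assumption~\ref{ass:lim-func}), the sequence $\mathbb{E}[F(x^k)]$ is monotone nonincreasing and bounded below, hence convergent; telescoping the inequality above over $k$ then gives $\sum_{k=0}^{\infty}\mathbb{E}[\|h(x^k)\|^2]<\infty$, which forces $\mathbb{E}[\|h(x^k)\|^2]\to 0$, and in particular $\|h(x^k)\|\to 0$ in probability and along a subsequence almost surely.

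Next I would promote this to almost-sure convergence of the whole sequence. Because the increments $F(x^k)-\mathbb{E}[F(x^{k+1})\mid \B^k]$ are nonnegative (the right-hand side of Lemma~\ref{lem:pre-converg1} is $\ge 0$), the process $\{F(x^k)\}$ is a nonnegative supermartingale after subtracting $F^*$, so by the Robbins--Siegmund / martingale convergence theorem $F(x^k)$ converges almost surely and $\sum_k \|h(x^k)\|^2 < \infty$ almost surely. Summability of a nonnegative series implies its terms tend to zero, giving $\|h(x^k)\|\to 0$ almost surely. This is the cleanest route, and it is where I expect to lean on the filtration generated by the multisets $\{\B^k\}$; one must be careful that $h(x^k)$ is measurable with respect to the information available before drawing $\B^k$, so that conditioning on $\B^k$ in Lemma~\ref{lem:pre-converg1} is legitimate.

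The main obstacle, and the reason Lemma~\ref{lem:pre-converg2} is stated, is handling the fact that the sets $\I$ and $\J$ (and hence $q$, $\beta$, and the sampling distribution) change across cycles, and that convergence of $\|h(x^k)\|^2$ in an averaged/summable sense does not immediately rule out that $\|h(x^k)\|$ oscillates. The delicate point is ensuring the bound $\tfrac{1}{2m\delta_{DP}}$ in Lemma~\ref{lem:pre-converg1} holds uniformly regardless of how $\I,\J$ are updated; fortunately that lemma already absorbs the worst case via $\overline{\beta}$ and the inequality $\tfrac{\tau\delta_{DP}}{q}\ge \tfrac{1}{m\delta_{DP}}$, so the constant is genuinely iteration-independent. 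If one wanted a deterministic (sure) statement rather than almost-sure, I would argue by contradiction: suppose $\|h(x^k)\|\not\to 0$, extract a subsequence with $\|h(x^{k_j})\|\ge \eta>0$; then along the convergent subsequence guaranteed by boundedness of $\{x^k\}$ on the level set $\{F\le\varrho\}$, Lemma~\ref{lem:pre-converg2} provides a uniform lower bound $\|h(x^k)\|\ge\gamma\eta$ on a neighborhood, contradicting the summability $\sum_k\|h(x^k)\|^2<\infty$.

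Finally, I would record the consequence via the stationarity characterization: by Corollary~\ref{cor:stationarity}, $\|h(x^k)\|\to 0$ means the iterates approach stationarity of problem~\eqref{eq:mainprob}, and combined with Definition~\ref{def:statonarity} this identifies the limit points with $\X^*$. The step I anticipate requiring the most care in writing is the measurability/filtration setup for the supermartingale argument and the precise invocation of Lemma~\ref{lem:pre-converg2} to transfer an averaged decay into convergence of every realization; the algebraic estimates themselves are routine given the earlier lemmas.
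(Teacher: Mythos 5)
Your proposal is correct, but it follows a genuinely different route from the paper. The paper argues by contradiction entirely at the level of total expectations: assuming a subsequence on which $\|h(x^k)\|\ge\mu>0$, it feeds this into Lemma~\ref{lem:pre-converg1} to get $\mathbb{E}\left[F(x^{k+1})\right]-\mathbb{E}\left[F(x^k)\right]\le -\mu^2/(2m\delta_{DP})$ along that subsequence, and then notes that $\{\mathbb{E}[F(x^k)]\}$ is nonincreasing and bounded below by $F^*$ (Assumption~\ref{ass:lim-func}), hence convergent, so its consecutive differences tend to zero --- contradicting the fixed negative bound. Your telescoping argument instead yields $\sum_k \mathbb{E}\bigl[\|h(x^k)\|^2\bigr]<\infty$ directly, and the Robbins--Siegmund supermartingale step upgrades this to $\sum_k\|h(x^k)\|^2<\infty$ and $\|h(x^k)\|\to 0$ almost surely. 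This is strictly more informative: it pins down the probabilistic mode of convergence, which the paper leaves implicit (its contradiction hypothesis treats the random sequence as if it were deterministic), and it gives a quantitative summability estimate for free. The price is the extra measurability/filtration bookkeeping you flag. Two small remarks: your closing invocation of Lemma~\ref{lem:pre-converg2} is unnecessary --- once $\sum_k\|h(x^k)\|^2<\infty$ holds on a realization, no subsequence can stay bounded away from zero, so the lemma adds nothing (and indeed the paper's own proof does not use it either); and your concern about the constant $\tfrac{1}{2m\delta_{DP}}$ being uniform across cycles is well placed but already resolved exactly as you say, inside Lemma~\ref{lem:pre-converg1} via $\overline{\beta}$.
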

\begin{proof}
    Without loss of generality, we admit that $x^k\not\in \X^*$. If $x^{p}\in
    \X^*$, for some $p\in\mathbb{N}$, \change{$\|h(x^k)\|=0$ for all $k\ge p$}.
    Suppose, by contradiction, that \change{$\|h(x^k)\| \not\rightarrow 0$}, then there
    exist $\mu>0$ and a subsequence $\mathbb{N}_1\subset \mathbb{N}$ such that
    \change{$$\|h(x^k)\|\ge\mu, \ k\in\mathbb{N}_1.$$ }

    Combining the last expression with  Lemma~\ref{lem:pre-converg1}, yields
    the following inequality:
    \begin{equation}\label{eq:lem3-eq1}
        \mathbb{E}\left[F(x^{k+1})| \B^k \right] - F(x^k)
        \le -\dfrac{\mu^2}{2m\delta_{DP}}, \  k\in\mathbb{N}_1.
    \end{equation}
    
    Taking the expectation on $\{\B^k\}_{k\in\mathbb{N}}$ in
    inequality~\eqref{eq:lem3-eq1}, since the random variables are i.i.d., we
    get
    \begin{equation}\label{eq:lem3-eq2}
        \mathbb{E}\left[F(x^{k+1})\right] - 
        \mathbb{E}\left[F(x^k) \right]
        \le -\dfrac{\mu^2}{2m\delta_{DP}}, \  k\in\mathbb{N}_1.
    \end{equation}
    
    From the expression~\eqref{eq:espvalue3} and
    Assumption~\ref{ass:lim-func}, we have that the sequence
    $\{\mathbb{E}\left[F(x^k)\right]\}_{k\in\mathbb{N}}$ is nonincreasing and
    bounded below, therefore, it converges. Its convergence along
    with~\eqref{eq:lem3-eq2} guarantees the contradiction, since the left side
    term of~\eqref{eq:lem3-eq2} goes to zero, while the right side term
    of~\eqref{eq:lem3-eq2} is negative.
\end{proof}

\change{Finally, we present a complexity result for
Algorithm~\ref{alg:ActivePCDM}, estimating \change{the expected objective
decrease in its sequences}}.

\begin{theorem}\label{teo:local-converg}
    Let $\{x^ k\}$ be a sequence generated by the \change{Parallel Active} BCDM
    Algorithm and suppose that the
    Assumptions~\ref{ass:lim-func},~\ref{ass:error-bound}
    and~\ref{ass:lips-contin} are verified. Then, there exists
    $k_1\in\mathbb{N}$ with the following linear convergence rate for the
    expected values of the objective function
    \[ 
        \mathbb{E}\left[F(x^{k})\right] - F^* \le  
        \left(1 - \dfrac{1}{m\delta_{DP}
        (1+(\overline{\beta} + L_f)\epsilon_2^2)}  \right)^{k-k_1} 
        (F(x^0) - F^*), 
    \]
    with $\overline{\beta}$ as in~\eqref{eq:betabarra}
    (Lemma~\ref{lem:pre-converg1}), $\epsilon_2$ as in
    Assumption~\ref{ass:error-bound} and $L_f$ as in
    Assumption~\ref{ass:lips-contin}.
\end{theorem}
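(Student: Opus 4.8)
The plan is to reduce the theorem to a single‑step contraction of the expected optimality gap and then telescope. Two ingredients are needed. The first is the per‑iteration decrease already proved in Lemma~\ref{lem:pre-converg1}, namely $F(x^k)-\mathbb{E}[F(x^{k+1})\mid\B^k]\ge \frac{1}{2m\delta_{DP}}\|h(x^k)\|^2$. The second is a complementary \emph{error‑bound comparison} that converts $\|h(x^k)\|^2$ into a multiple of the optimality gap, namely
\[
  F(x)-F^* \le \frac{1+(\overline{\beta}+L_f)\epsilon_2^2}{2}\,\|h(x)\|^2
\]
for every feasible $x$ lying in the error‑bound regime ($F(x)\le\varrho$ and $\|h(x)\|\le\epsilon_1$). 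Chaining the two yields $F(x^k)-\mathbb{E}[F(x^{k+1})\mid\B^k]\ge \kappa\,(F(x^k)-F^*)$ with $\kappa=\frac{1}{m\delta_{DP}(1+(\overline{\beta}+L_f)\epsilon_2^2)}$, which is precisely the stated contraction factor.

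For the comparison inequality I would let $\bar{x}$ be the projection of $x$ onto $\X^*$, so that $F(\bar x)=F^*$ and, by Assumption~\ref{ass:error-bound}, $\|x-\bar x\|=dist(x,\X^*)\le\epsilon_2\|h(x)\|$. Then I would exploit that $h^{\overline{\beta}}(x)$ minimizes the model $Q(\cdot,x,\overline{\beta})$ over the feasible box: testing this minimality against the feasible displacement $\bar x-x$, and replacing $\nabla f(x)^T(\bar x-x)$ by $f(\bar x)-f(x)$ through convexity of $f$ (and using convexity of $\psi$), gives $Q(h^{\overline{\beta}}(x),x,\overline{\beta})-F^*\le \frac{\overline{\beta}}{2}\|x-\bar x\|^2$. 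The true curvature $L_f$ enters through Corollary~\ref{cor:quad-sup-bound}: it is what lets me compare the true objective at the trial point $x+h^{\overline{\beta}}(x)$ with its quadratic surrogate, and thereby bound the model decrease $Q(0,x,\overline{\beta})-Q(h^{\overline{\beta}}(x),x,\overline{\beta})=F(x)-Q(h^{\overline{\beta}}(x),x,\overline{\beta})$ from below by a multiple of $F(x)-F^*$. Finally I would pass from the $h^{\overline{\beta}}$‑based estimate to one in $\|h(x)\|$ using the relation $\overline{\beta}\|h^{\overline{\beta}}(x)\|^2\ge\|h(x)\|^2$ of~\eqref{eq:lem1-eq4} together with the strong‑convexity estimate $Q(0,x,\overline{\beta})-Q(h^{\overline{\beta}}(x),x,\overline{\beta})\ge\frac{\overline{\beta}}{2}\|h^{\overline{\beta}}(x)\|^2$ already used in~\eqref{eq:lem1-eq3}, which yields $\|h(x)\|^2\le 2\bigl(F(x)-Q(h^{\overline{\beta}}(x),x,\overline{\beta})\bigr)$.

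Next I would establish the index $k_1$. Since the objective values are nonincreasing in conditional expectation by~\eqref{eq:espvalue3}, taking $\varrho=F(x^0)$ confines the iterates to the sublevel set where the error bound is assumed, while Theorem~\ref{lem:pre-converg3} guarantees $\|h(x^k)\|\to0$, hence $\|h(x^k)\|\le\epsilon_1$ from some index on. Therefore there is a $k_1$ beyond which Assumption~\ref{ass:error-bound} is in force and the comparison inequality applies at every subsequent iterate.

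The assembly is then routine: for $k\ge k_1$ the contraction $\mathbb{E}[F(x^{k+1})\mid\B^k]-F^*\le(1-\kappa)(F(x^k)-F^*)$ holds; taking total expectations (the multisets $\B^k$ are i.i.d., so the tower property applies) gives $\mathbb{E}[F(x^{k+1})]-F^*\le(1-\kappa)(\mathbb{E}[F(x^k)]-F^*)$, and iterating from $k_1$ to $k$, together with $\mathbb{E}[F(x^{k_1})]\le F(x^0)$, produces the announced bound. I expect the main obstacle to be the comparison inequality: pinning down the exact constant $1+(\overline{\beta}+L_f)\epsilon_2^2$ requires orchestrating the minimality of the model solution, the two convexity estimates, the quadratic upper bound (the source of $L_f$), and the error bound, and then reconciling the $h^{\overline{\beta}}$ and $h$ residuals through~\eqref{eq:lem1-eq4}. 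A secondary but genuine subtlety is the mode of convergence delivered by Theorem~\ref{lem:pre-converg3} (almost sure versus in expectation), which governs how cleanly the error‑bound regime is entered and, once entered, never left.
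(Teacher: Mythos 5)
Your overall architecture (a one-step contraction of the expected gap followed by telescoping, with $k_1$ supplied by Theorem~\ref{lem:pre-converg3} and Assumption~\ref{ass:error-bound}, and the total expectation taken via the i.i.d.\ structure of the $\B^k$) is the same as the paper's, and your handling of $k_1$, of the minimality test against $\bar x-x$, and of the final iteration is sound. The genuine gap is the comparison inequality $F(x)-F^*\le\tfrac{1+(\overline{\beta}+L_f)\epsilon_2^2}{2}\|h(x)\|^2$ on which your whole reduction rests. Writing $c=\overline{\beta}+L_f$, the ingredients you list yield
\[
F(x)-F^*\;\le\;\bigl[F(x)-Q(h^{\overline{\beta}}(x),x,\overline{\beta})\bigr]+\tfrac{c\,\epsilon_2^2}{2}\|h(x)\|^2
\qquad\text{and}\qquad
F(x)-Q(h^{\overline{\beta}}(x),x,\overline{\beta})\;\ge\;\tfrac12\|h(x)\|^2,
\]
and the second inequality points the wrong way: to conclude you would need the model decrease bounded \emph{above} by $\tfrac12\|h(x)\|^2$, which is false. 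Indeed the target inequality itself fails for nonsmooth $\psi$ (the paper's main use case): take $f\equiv0$, $\psi(x)=M|x|$, no bounds, and $x=t\to0^+$; then $h(t)=-t$, the error bound holds with $\epsilon_2=1$, yet $F(t)-F^*=Mt$ is of order $\|h(t)\|$, not $\|h(t)\|^2$, so no constant of the stated form can work.

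The repair---and the point where the paper's proof genuinely differs from your plan---is to pivot through the model decrease rather than through $\|h(x)\|^2$. Combining the two displayed inequalities gives $F(x)-F^*\le(1+c\epsilon_2^2)\bigl[F(x)-Q(h^{\overline{\beta}}(x),x,\overline{\beta})\bigr]$, and~\eqref{eq:lem1-eq2} bounds that model decrease by $m\delta_{DP}\bigl(F(x^k)-\mathbb{E}[F(x^{k+1})\mid\B^k]\bigr)$; chaining these produces exactly the factor $1-\tfrac{1}{m\delta_{DP}(1+c\epsilon_2^2)}$. Equivalently, the paper keeps the residual term $\tfrac{c\epsilon_2^2}{2m\delta_{DP}}\|h(x^k)\|^2$ in~\eqref{eq:teolocalconv-eq8}, bounds it by $c\epsilon_2^2\bigl(F(x^k)-\mathbb{E}[F(x^{k+1})\mid\B^k]\bigr)$ via Lemma~\ref{lem:pre-converg1}, and moves the resulting conditional-expectation term to the left-hand side; the denominator $1+c\epsilon_2^2$ arises from that rearrangement, not from a quadratic-growth bound on the gap. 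With that substitution the rest of your argument goes through unchanged.
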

\begin{proof}
    From \change{Theorem}~\ref{lem:pre-converg3} we have \change{ $\|h(x^k)\|\rightarrow 0$}.
    Hence, Assumption~\ref{ass:error-bound} ensures that, for a fixed $\varrho
    = F(x^0)$, there exist $\epsilon_2$>0 and $k_1\in\mathbb{N}$ such that
    \change{\begin{equation}\label{eq:teolocalconv-eq2}
        \|x^k - z^k\| \le \epsilon_2 \|h(x^k)\|, \ \forall \ 
        k \ge k_1,
    \end{equation}}
    in which $z^k\in\X^*$ satisfies $\|x^k - z^k\|=\text{dist}(x^k,\X^*)$.
    
    From Corollary~\ref{cor:quad-sup-bound}, we have
    \begin{equation}\label{eq:teolocalconv-eq4}
        f(x) + \nabla f(x)^T(y-x) \le f(y) + \dfrac{L_f}{2}\|y - x\|^2, 
        \ \forall \ y, x \in \R^n.
    \end{equation}
    
    Adding $\tfrac{\overline{\beta}}{2} \|x-y\|^2 +\psi(y)$ to both sides 
    of inequality~\eqref{eq:teolocalconv-eq4} yields
    \begin{eqnarray}\label{eq:teolocalconv-eq5}
        Q(y-x,x,\overline{\beta}) 
        & = & f(x) + \nabla f(x)^T(y-x) + \dfrac{\overline{\beta}}{2} 
        \|x-y\|^2 +\psi(y)\nonumber\\
        & \le & f(y) + \dfrac{\overline{\beta} + L_f}{2}\|x-y\|^2 
        +\psi(y)\nonumber\\ 
        & = & F(y) + \dfrac{\overline{\beta} + L_f}{2}\|x-y\|^2.
    \end{eqnarray}
    
    As a consequence of~\eqref{eq:teolocalconv-eq5}, we obtain
    \begin{equation}\label{eq:teolocalconv-eq6}
        Q(z^k-x^k,x^k,\overline{\beta}) 
        \le   F^* + 
        \dfrac{c}{2}\|x^k-z^k\|^2,\ c=\overline{\beta} + L_f.
    \end{equation}
    
    Putting together the expressions~\eqref{eq:lem1-eq2},
    ~\eqref{eq:teolocalconv-eq6}, and using that \change{$\min\limits_{s\in\X}
    Q(h,x^k,\overline{\beta})=
    Q(h^{\overline{\beta}}(x^k),x^k,\overline{\beta})$}, we have
    \begin{align}\label{eq:teolocalconv-eq7}
        \mathbb{E}\left[F(x^{k+1})| \B^k\right] 
        & \le  \left( 1 - \dfrac{1}{m\delta_{DP}} \right) F(x^k) + 
        \dfrac{1}{m\delta_{DP}} \left( F^* + 
        \dfrac{c}{2}\|x^k-z^k\|^2\right)\nonumber\\
        & =  F(x^k) - \dfrac{1}{m\delta_{DP}}( F(x^k) - F^*)  + 
        \dfrac{c}{2m\delta_{DP}} \|x^k-z^k\|^2,
    \end{align}
    for all $k\ge k_1$.
    
    Subtracting $F^*$ from both sides of 
    inequality~\eqref{eq:teolocalconv-eq7} and using the 
    expression~\eqref{eq:teolocalconv-eq2}, we obtain
    \change{\begin{eqnarray}\label{eq:teolocalconv-eq8}
        \mathbb{E}\left[F(x^{k+1})| \B^k\right] - F^*
        & \le & \left(1 - \dfrac{1}{m\delta_{DP}} \right)( F(x^k)
        - F^*) + \nonumber\\
        &  & + \dfrac{c\epsilon_2^2}{2m\delta_{DP}} \|h(x^k)\|^2,
    \end{eqnarray}}
    for all $k\ge k_1$.
    
    In  view of  Lemma~\ref{lem:pre-converg1}, the 
    relationship~\eqref{eq:teolocalconv-eq8} turns into
    \begin{eqnarray*}
        \mathbb{E}\left[F(x^{k+1})| \B^k\right] - F^*
        & \le & \left(1 - \dfrac{1}{m\delta_{DP}} \right)( F(x^k)
        - F^*) + \nonumber\\
        &   & + c\epsilon_2^2 
        (F(x^k) - \mathbb{E}\left[F(x^{k+1})| \B^k\right])\nonumber\\
        & = & \left(1 - \dfrac{1}{m\delta_{DP}} + 
        c\epsilon_2^2\right)
        ( F(x^k) - F^*) + \nonumber\\
        &   & + c\epsilon_2^2 
        (F^* - \mathbb{E}\left[F(x^{k+1})| \B^k\right]).
    \end{eqnarray*}
    This may be rewritten as
    \begin{equation}\label{eq:teolocalconv-eq9}
        \mathbb{E}\left[F(x^{k+1})| \B^k\right] - F^*
        \le \left(1 - \dfrac{1}{m\delta_{DP}
        (1+c\epsilon_2^2)} 
        \right)( F(x^k) - F^*),
    \end{equation}
    for all $k\ge k_1$.
    
    Taking both sides of~\eqref{eq:teolocalconv-eq9} to the 
    expectation conditioned to 
$\{\B^k\}_{k\in\mathbb{N}}$, 
    which are i.i.d. random variables, yields
    \begin{equation}\label{eq:teolocalconv-eq10}
        \mathbb{E}\left[F(x^{k+1}) \right] - F^*
        \le \left(1 - \dfrac{1}{m\delta_{DP}
        (1+c\epsilon_2^2)} 
        \right)(\mathbb{E}\left[ F(x^k) \right] - F^*),
    \end{equation}
    for all $k\ge k_1$.
    
    The convergence result follows by applying the expression~\eqref{eq:teolocalconv-eq10} 
    repeatedly, since by~\eqref{eq:espvalue3}, the sequence 
$\{\mathbb{E}\left[ F(x^k)\right]\}$ is nonincreasing.
\end{proof}

\change{
    The complexity result above includes the term $k - k_1$ in the exponent of the linear rate, meaning that linear convergence is only ensured from iteration $k_1$ on. This is needed to accommodate the nature of Assumption~\ref{ass:error-bound}. This assumption only asserts the error bound when $F$ and $h$ are small enough, that we assume will happen at iteration $k_1$ in the proof of Theorem~\ref{teo:local-converg}. However, in some special cases it is possible to estimate $k_1$. A notable example is when $f$ is strongly convex with Lipschitz continuous gradients. In this case, \cite[Theorem 4]{TS09} shows that Assumption~~\ref{ass:error-bound} holds with $\epsilon = \infty$ and independently of $\rho$ for all $x \in \X$. Hence, in this case $k_1 = 0$, and Theorem~\ref{teo:local-converg} is a global linear convergence result in expectation. For other cases, estimating $k_1$ may be difficult, or even impossible, and problem dependent.
}

 \selectlanguage{english}

\section{Numerical tests}
\label{sec:numerical}

\change{This section presents} the numerical behavior of the parallel version of
Active BCDM\change{,} Algorithm~\ref{alg:ActivePCDM},
\change{denoted \texttt{PA} from now on}. In~\cite{LSS19}, the
authors introduced its serial variant. \change{It showed} that active
constraints \change{identification} combined with a nonuniform choice of
coordinate blocks \change{was} very efficient and competitive \change{with}
several well-established methods in the literature. Therefore, in this work,
we will focus on how parallelism can accelerate \change{this} method and compare
\change{the effectiveness of the acceleration achieved by} \texttt{PA} with \change{the one achieved by} its uniform counterpart.

\change{ To accomplish this, one might consider it enough to compare the
behaviors of the new \texttt{PA} code and \texttt{PCDM}, the standard
implementation of parallel uniform block selection from~\cite{RT2016}.
However, as \texttt{PA} evolved from the code in~\cite{LSS19}, it is
implemented in Fortran 90, while \texttt{PCDM} is implemented in C++. This
difference, and the fact that such codes were developed by different groups,
could be partially responsible for the perceived differences in performance.
We introduced our implementation of the uniform selection method in Fortran
90, \texttt{UBCDM}, to mitigate this. Note that Fortran 90 is considered to
produce slightly faster executables due to its programming model that allows
the compiler to optimize the generated binary better. This should be
considered when interpreting the results below. We also point out that we
needed to perform minimal changes to \texttt{PCDM} to enable repeated
executions for each test instance so that the random selection of the blocks
could be taken into account.}

All experiments were performed on a Ryzen $9 7950X3D$ $16$C/$32$T system with
$128$ GB of memory, running Ubuntu Linux $22.04.4$ LTS. \change{They were
compiled using version 11.4.0 of the GNU compilers, and the parallelization
is achieved using OpenMP.} Since the CPU has $16$ processing cores with
independent floating point units, the experiments were limited to \change{use} a
maximum of 16 threads \change{always pinned to run in different cores}. For each
\change{test case}, the \change{codes} were executed with the number of threads
$\tau$ within the set $\{1, 2, 4, 8, 16\}$.

Throughout the section, the class of problems tested is the well-known
Lasso~\cite{TB96}, given by
\begin{equation}\label{eq:lasso}
    \min_x \dfrac{1}{2} \|Ax-b\|^2_2 + \lambda \|x\|_1, \quad \lambda>0, 
\end{equation}
 which can be reformulated as a problem with simple constraints
\begin{align}\label{eq:lassoref}
\min_{x_+,x_-} &\quad \frac{1}{2} \|A(x_+-x_-) - b\|^2+\lambda e^T(x_++x_-) \nonumber\\
\text{s.t.}    &\quad x_+\ge 0 \\
            &\quad x_-\ge 0, \nonumber
\end{align}
where $e=(1, \dots, 1)^T$. Note that all solutions $\overline{x}$ of~\eqref{eq:lasso} can be written in terms of solutions $(\overline{x}_+, \overline{x}_-)$ of~\eqref{eq:lassoref}, using $\overline{x} = \overline{x}_+ - \overline{x}_-$.

\change{Adopting the same choices of~\cite{RT2016}, the initial point
$x^0$ was set as the null vector; the initial cycle size $c_0$ was the problem
dimension $n$, and $\ell_{\max} = 1000 n$. Moreover, for updating the set
${\cal J}$, we have used the strategy described in~\cite[Section~6.1]{RT2016}.
}

\change{All algorithms evaluated in this study are variants of
coordinate descent methods that operate on coordinate blocks of size one,
i.e., individual variables. To account for the stochasticity inherent in the
coordinate selection process, each algorithm was executed multiple times on
the same problem instance. Specifically, 20 independent runs were
performed for the test cases described in Subsection~\ref{subs:perfcen}, and
100 runs for those in Subsection~\ref{subs:realcen}, provided
that the cumulative execution time to solve the instance exceeded one second. Otherwise, a sufficient number of runs were performed to complete one second of cumulative time.}

\change{Each run was stopped once the objective value of an iteration reached a
predefined target, which closely approximates the known optimal value. More
details on how such a target was obtained are presented in the sections describing
each test scenario below. Finally, the primary performance metric used in this
study is the average running time \change{of} all independent runs.}

Analogously to what was done in~\cite{RT2016}, even though the theory of
Parallel Active BCDM was originally formulated for the synchronous case, the
coordinate blocks are updated asynchronously. The updates of the blocks (of
size $1$) have a very low cost, compared to the effort necessary to
synchronize the gradient update of the smooth part of~\eqref{eq:lasso} after
the calculation of the descent directions. This makes synchronous
implementation impractical.

\change{In the serial case, the gradient can be efficiently updated at a low computational cost immediately following the modification of a single coordinate. However, in the asynchronous setting, multiple threads concurrently read from and write to the shared gradient vector, leading to a "race condition" that can cause the gradient to accumulate numerical errors over time. This gradient degradation can severely impact the algorithm's convergence.}

\change{To mitigate this issue, our implementation includes a
correction routine that runs after each cycle that comprises $n$ coordinate
updates. This routine recomputes and
corrects the shared gradient vector to restore numerical consistency.
Additionally, this synchronization point is leveraged within the framework of
Algorithm~\ref{alg:ActivePCDM} to identify the active constraints, thus
improving the algorithmic performance.}

\subsection{A controlled and highly favorable scenario}\label{subs:perfcen}

The initial experiments \change{were} performed in a controlled environment that
is particularly favorable to coordinate descent methods. As theoretical
results suggest, the smaller the degree of partial separability $\omega$, the
greater the expected acceleration for these methods. For
problem~\eqref{eq:lasso}, the value of $\omega$ depends on the number of
nonzero elements in the rows of the matrix $A$.

To generate the initial test instances, we employed the random problem
generator of~\cite{RT2016}, which the authors provide in a C++ implementation.
\change{This tool builds a class of test problems originally described
in~\cite{NE13}. Using this generator, it is possible to construct a matrix $A$
and a vector $b$ for the problem~\eqref{eq:lasso}, assuming $\lambda=1$, based
on the following input parameters: the dimensions of matrix $A$; the number of
nonzero entries per row in matrix $A$, and the sparsity level of the optimal
solution, that is, the number of its nonzero components. By default, the
\change{sparsity level} is set to $\min\{10000,\tfrac{m}{2}\}$ and we used
this value.} Consequently, the value of \change{the degree of
partial separability} $\omega$ \change{is} indirectly influenced by these
inputs. 

\change{We generated six random problem instances (matrix $A$ and
vector $b$). Among these, three instances were constructed with $20$ nonzero
elements per column, while the remaining three contained $1000$ nonzero
elements per column. Table~\ref{tab:data-ne} summarizes the set of
generated problems, including their identifiers, the dimensions of matrix $A$,
the corresponding values of $\omega$, and the proportion of
zero components in the optimal generated solution $x^*$,
denoted by ($nz(x^*)$).}

\change{
\arrayrulecolor{red}
\begin{table}[]
    \centering
    \begin{tabular}{c|c|c|c} \hline
        Name & \#rows $\times$ \#cols & $\omega$ & $nz(x^*)$  \\ \hline
        $Ne1$ &  $100{,}000\times 200{,}000$ & 258 & $95.0\%$  \\ \hline
        $Ne2$ &  $200{,}000\times 100{,}000$ & 85 & $90.0\%$  \\ \hline
        $Ne3$ &  $2{,}000{,}000\times 1{,}000{,}000$ & 90 & $99.0\%$ \\ \hline
        $Ne4$ &  $100{,}000\times 200{,}000$ & 71 & $95.0\%$  \\ \hline
        $Ne5$ &  $200{,}000\times 100{,}000$ & 27 & $90.0\%$ \\ \hline
        $Ne6$ &  $2{,}000{,}000\times 1{,}000{,}000$  & 30 & $99.0\%$ \\ \hline
    \end{tabular}
    \caption{Features of the artificially generated problems.}
    \label{tab:data-ne}
\end{table}
\arrayrulecolor{black} 
}

To define the target function value ($F_{target}$) for each of the six test
problems, we employed our implementation of the \change{uniform coordinate descent method, running for $1000n$ iterations with a block size equal to $1$},
\change{obtaining a very precise approximation of the optimal value that we
denote $\tilde{F}^*$.} Finally, we constructed the target value so that the
relative error between $F_{target}$ and $\tilde{F}^*$ is $10^{-4}$, that is,
$F_{target}$ is defined as
\[
F_{target} = \tilde{F}^*(1+10^{-4}).
\]

\change{We begin analyzing the performance of
Algorithm~\ref{alg:ActivePCDM} to identify an effective configuration. This
process requires the appropriate selection of two key parameters:
$\delta_{DP}$  and $\delta_{F}$. The first governs the
probability distribution employed by the method, that is, the relative
frequency with which the method selects inactive coordinates compared to
active ones. The second one determines the number of iterations between
successive evaluations of the identification function. In this preliminary
analysis, we fix $\delta_{F}=1$ and evaluate the algorithm's performance with
different values of $\delta_{DP}\in\{10,50,100,500,1000\}$.  In the
experimental results shown in the figures, each variant of the algorithm is
labeled as \texttt{PA} followed by the corresponding value of $\delta_{DP}$
and the number of threads used. To put the different
configurations in perspective, we have used $\log_2$ scaled performance
profiles of the average execution time among the 20 independent runs of each
problem instance solved by each variant of the algorithm. Such cumulative
distribution plots depict the proportion of problems solved in the $y$-axis,
within the factor of the fastest ($\log_2$ scaled) that is shown in the
$x$-axis. We refer the reader to~\cite{DolanMore2002} for further details on
this benchmarking tool.}

\begin{figure}
    \begin{subfigure}[h]{0.5\linewidth}
        \includegraphics[width=\linewidth]{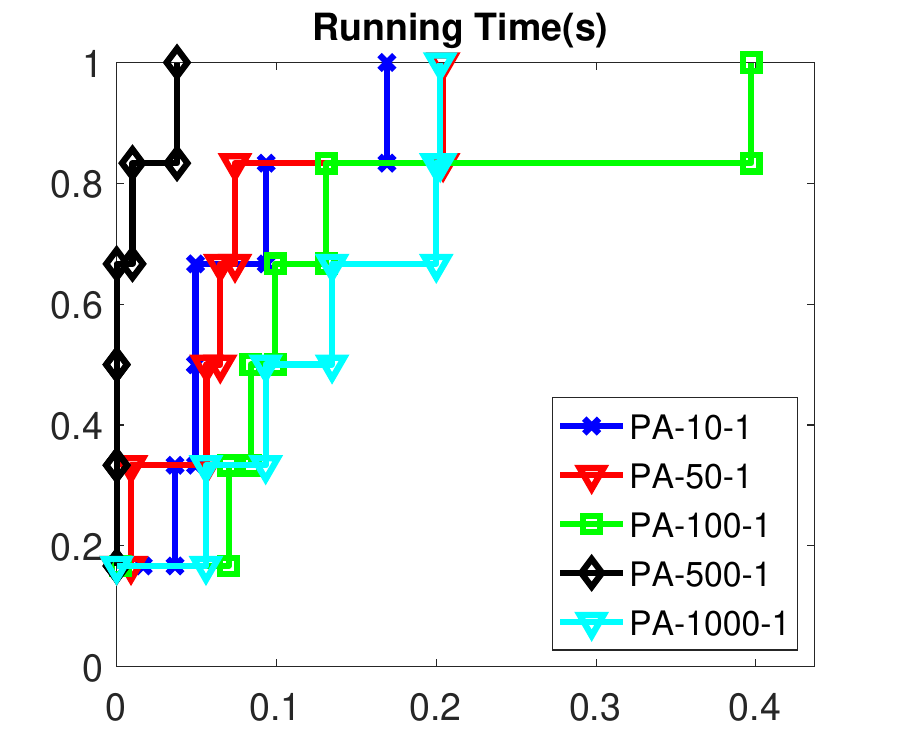}
        \caption{$1$ thread.}
    \end{subfigure}
    \hfill
    \begin{subfigure}[h]{0.5\linewidth}
        \includegraphics[width=\linewidth]{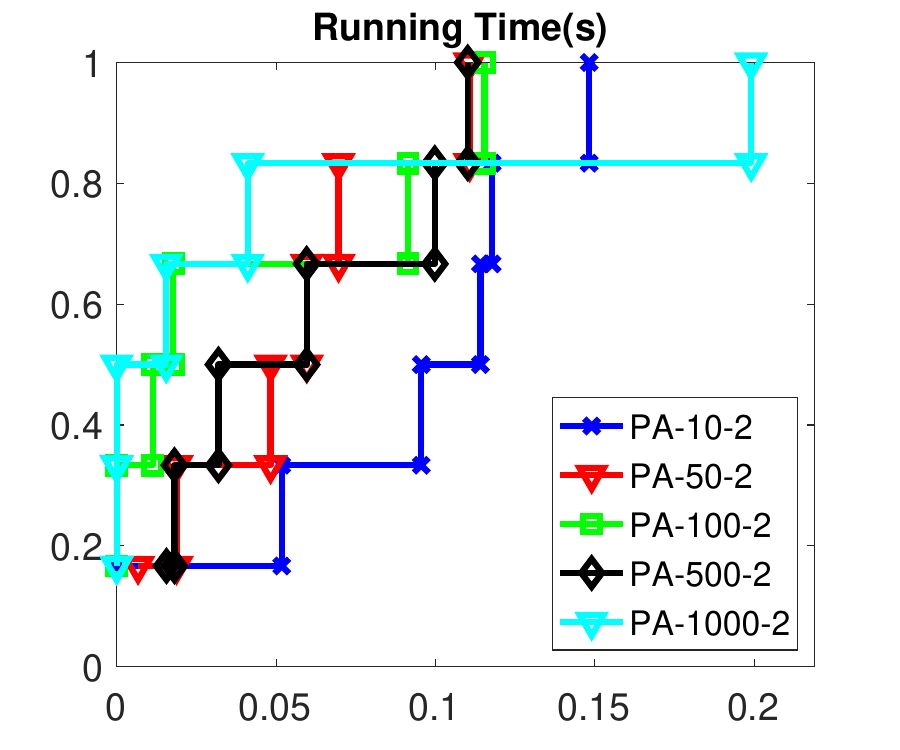}
        \caption{$2$ threads.}
    \end{subfigure}\caption{Performance profile of the average execution time between the variants of the Parallel Active BCDM method (\texttt{PA}) with $1$ and
    $2$ threads.}
    \label{fig:fig4}
\end{figure}

\begin{figure}
    \begin{subfigure}[h]{0.33\linewidth}
        \includegraphics[width=\linewidth]{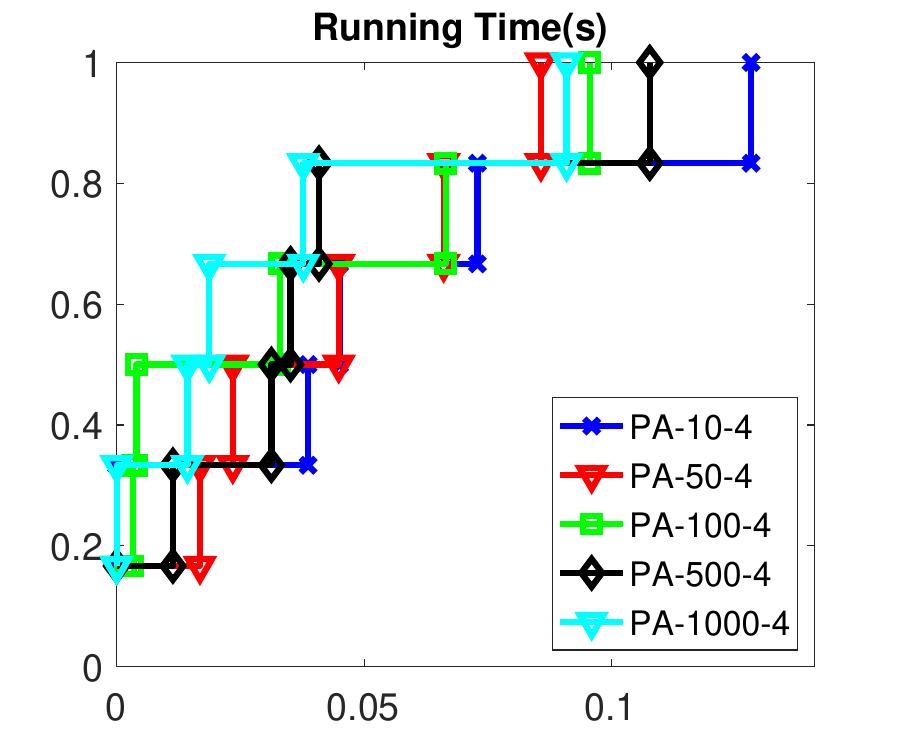}
        \caption{$4$ threads.}
    \end{subfigure}
    \hfill
    \begin{subfigure}[h]{0.33\linewidth}
        \includegraphics[width=\linewidth]{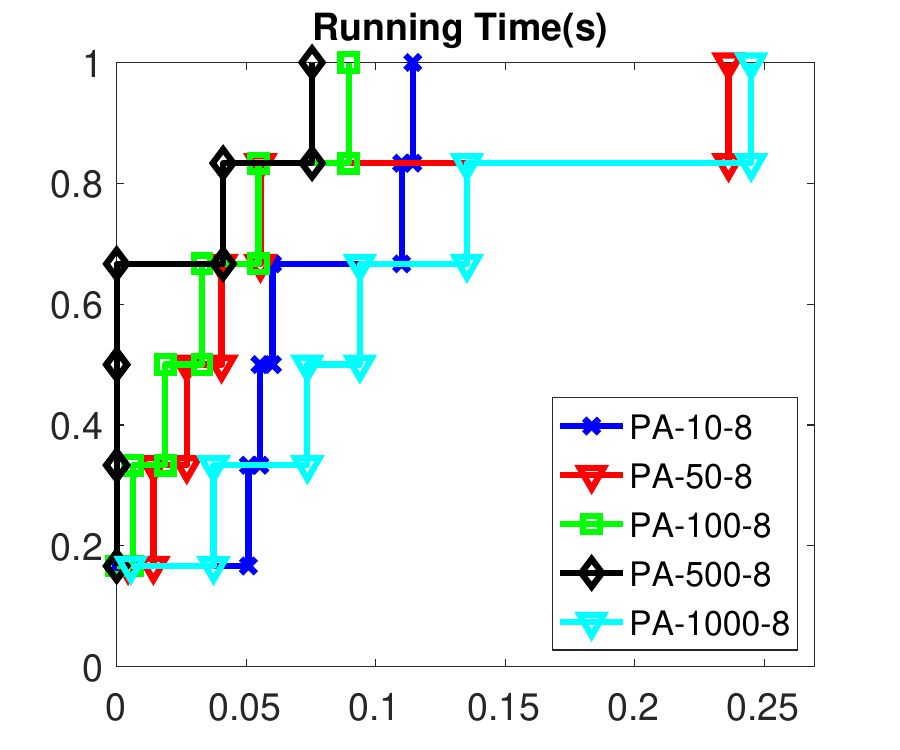}
        \caption{$8$ threads.}
    \end{subfigure}\begin{subfigure}[h]{0.33\linewidth}
        \includegraphics[width=\linewidth]{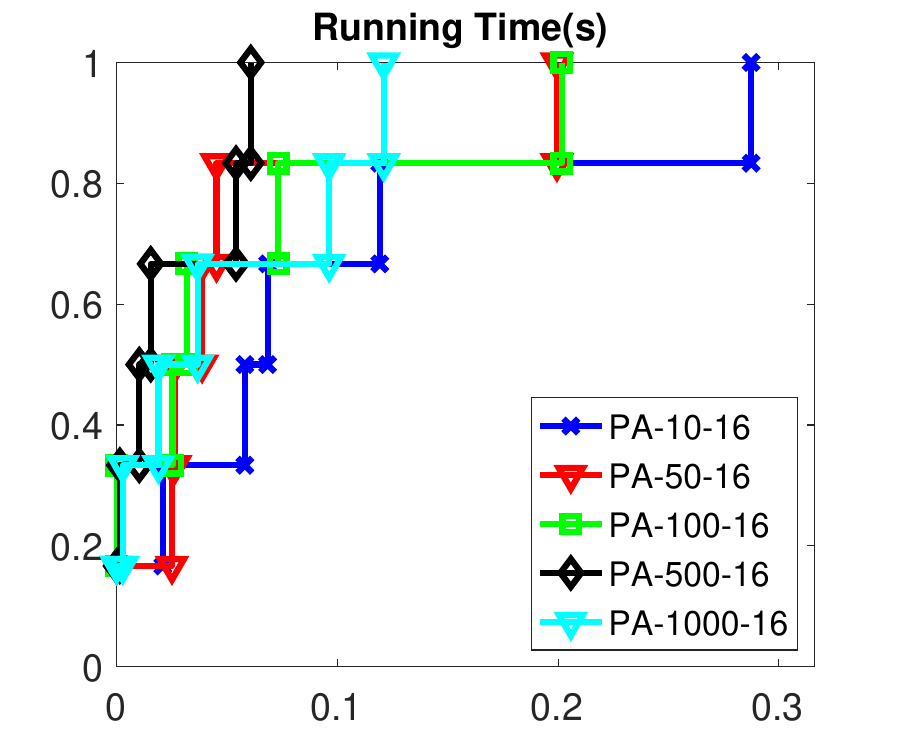}
        \caption{$16$ threads.}
    \end{subfigure}\caption{Performance profile of the average execution time between the variants of the Parallel Active BCDM method (\texttt{PA}) with $4$,
    $8$, and $16$ threads.}
    \label{fig:fig5}
\end{figure}

\change{Looking at Figures~\ref{fig:fig4} 
and~\ref{fig:fig5}, we note that the performance of the 
methods is minimally affected by the change in the value 
of $\delta_{DP}$ for this set of problems. Due to 
the promising performance of the \texttt{PA-500} variant, this choice 
was adopted for the subsequent tests within this subsection.}

\change{Using the selected \texttt{PA} variant, we evaluate the
multi-threaded speedup in execution time for the \texttt{UBCDM},
\texttt{PCDM}, and \texttt{PA-500} methods for the six randomly generated $Ne$
test problems. To illustrate the performance of these methods, we present the
results in the form of boxplots, as shown in Figure~\ref{fig:rat-ne}. Each
boxplot represents the speedup ratio, defined as the average execution time of
the serial version of a given method divided by the average execution time of
its multi-threaded counterpart. This visualization provides a clear comparison
of the parallel efficiency achieved by the different algorithmic
implementations. 
}

\begin{figure}
    \begin{subfigure}[h]{0.33\linewidth}
        \includegraphics[width=\linewidth]{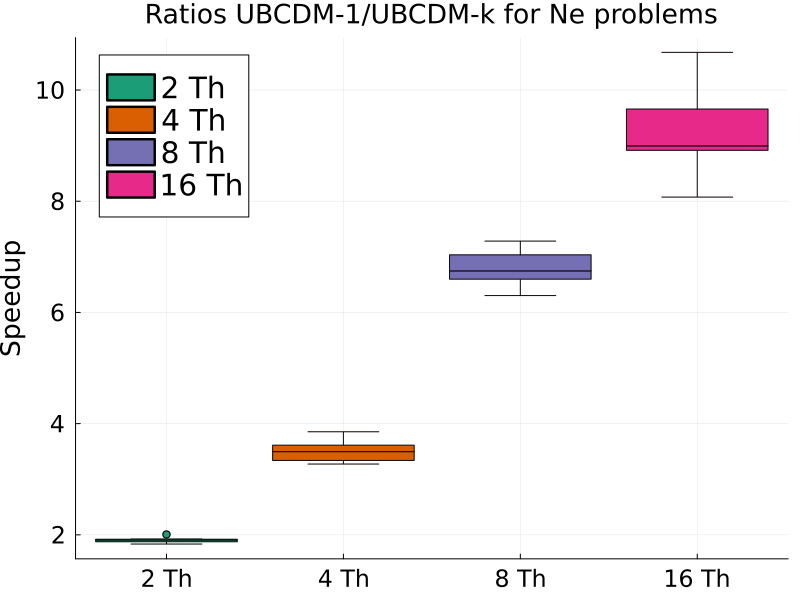}
\end{subfigure}
    \hfill
    \begin{subfigure}[h]{0.33\linewidth}
        \includegraphics[width=\linewidth]{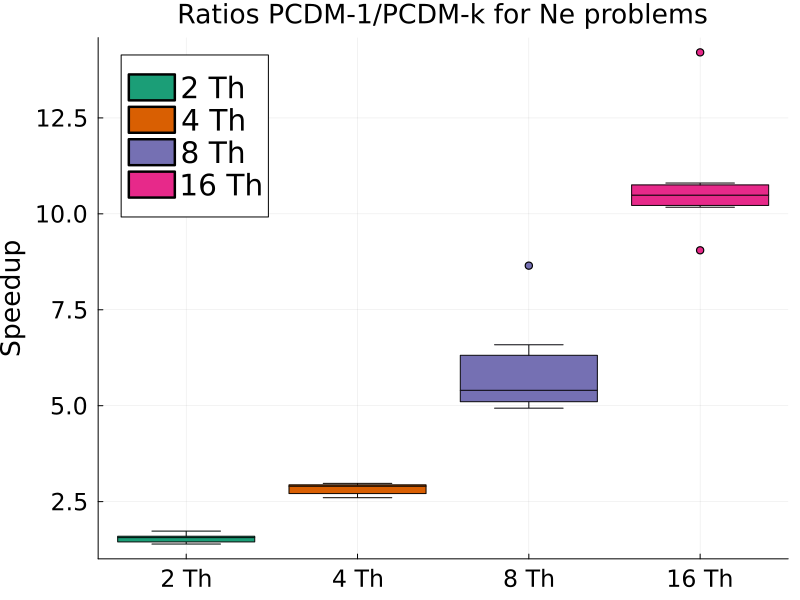}
\end{subfigure}\begin{subfigure}[h]{0.33\linewidth}
        \includegraphics[width=\linewidth]{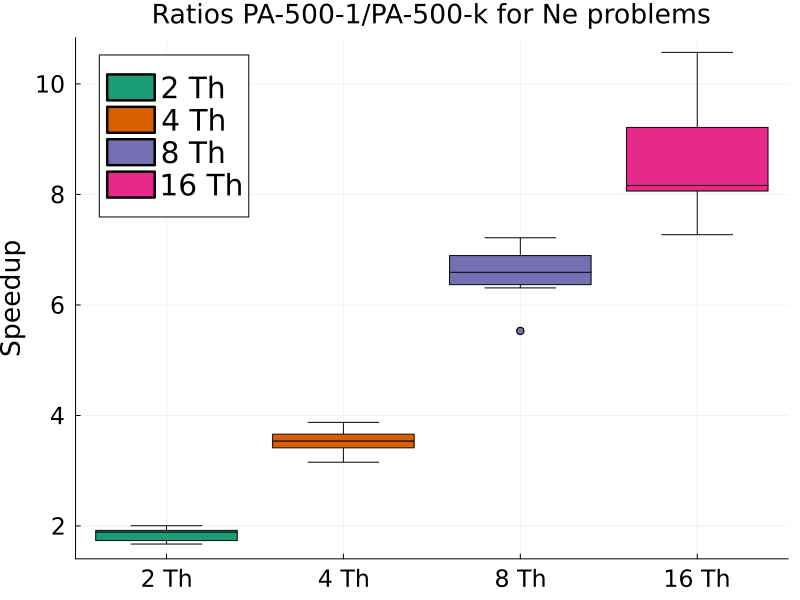}
\end{subfigure}\caption{Boxplots comparing the speedup in running time between
    \texttt{UBCDM-}$1$/\texttt{UBCDM-}$k$ (left),
    \texttt{PCDM-}$1$/\texttt{PCDM-}$k$ (center) and
    \texttt{PA-500-}$1$/\texttt{PA-500-}$k$ (right), with $k\in \{ 2, 4, 8,
    16\}$ for $Ne$ problems.}
    \label{fig:rat-ne}
\end{figure}

\change{Figure~\ref{fig:rat-ne} shows that all methods scale
effectively in terms of speedup for the problems considered: as the number of
threads increases, the observed speedup improves consistently. The
\texttt{UBCDM} and \texttt{PA-500} methods exhibit similar speedup behavior
for the tested configurations. In contrast, the \texttt{PCDM} method achieves
a comparable average speedup when using $2$, $4$, and $8$ threads but surpasses
the other approaches in average speedup when executed with $16$ threads.}

\begin{figure}[h]
    \begin{subfigure}[h]{0.50\linewidth}
        \includegraphics[width=\linewidth]{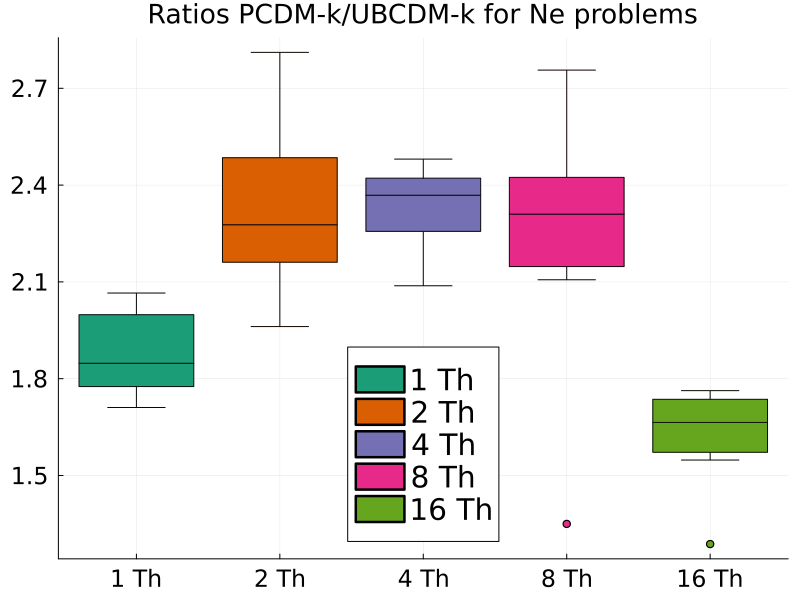}
\end{subfigure}\begin{subfigure}[h]{0.50\linewidth}
    \includegraphics[width=\linewidth]{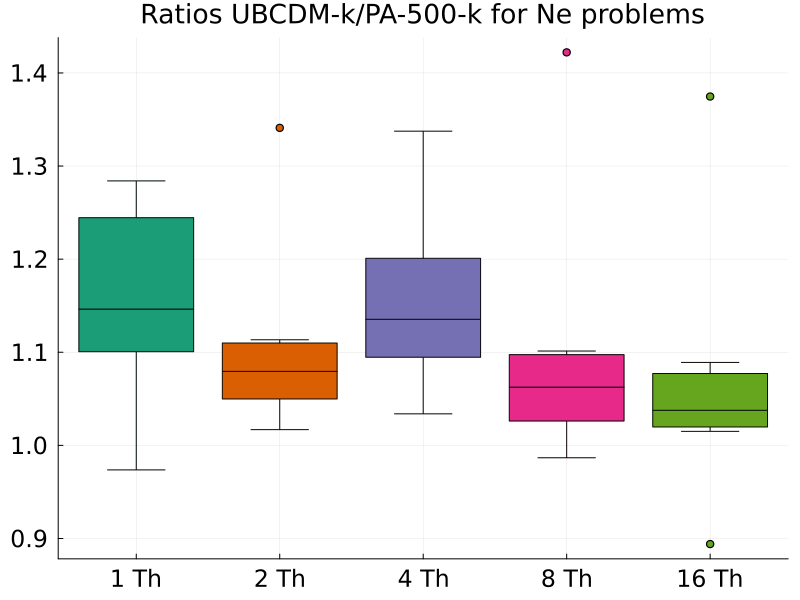}
\end{subfigure}\caption{Boxplot comparing the running time ratio between \texttt{PCDM} and \texttt{UBCDM} (left) and between \texttt{UBCDM} and \texttt{PA-500} (right) for $Ne$ problems, with $1$, $2$, $4$, $8$ and $16$ threads.}
     \label{fig:rat-pbub-ne1}
\end{figure}

\change{We conclude this subsection by presenting two boxplots in
Figure~\ref{fig:rat-pbub-ne1}, which compare the execution time ratios between
the \texttt{PCDM} and \texttt{UBCDM}  and between the \texttt{UBCDM} and
\texttt{PA-500} for different thread values. The results indicate that
\texttt{UBCDM} achieves, on average, a two-times speedup relative to
\texttt{PCDM}. In turn, for this set of problems,
\texttt{PA-500} demonstrates a modest performance advantage over
\texttt{UBCDM}, with an average speedup slightly greater than one.
Additionally, in Section~\ref{subs:ne} of the appendix,
we present tables reporting the execution time and the average
number of iterations for the most relevant methods discussed in this
subsection, namely \texttt{UBCDM}, \texttt{PCDM}, and \texttt{PA-500}.
The values displayed in such tables contrasting the
pairs $(Ne1,Ne4)$, $(Ne2,Ne5)$, and $(Ne3,Ne6)$ corroborate that the smaller
the degree of partial separability $\omega$, the greater the expected
acceleration for the coordinate descent methods, as suggested by the
theoretical results.}

\change{It is important to caution the reader that, despite the
favorable speedup presented in the plots of Figure~\ref{fig:rat-ne},
consistent with the principle discussed in~\cite{TTR2018}, it should be
interpreted as indicative of the best-case performance achievable by parallel
coordinate descent methods. In real-world applications, problem instances
often involve matrices~$A$ with less favorable structures, particularly with
columns containing highly nonuniform quantities of nonzero elements. The
structural uniformity present in the test cases used here may bias the
results, obscuring several challenges that coordinate descent methods face
when applied to more heterogeneous and irregular matrix structures. 
The experiments in the next subsection were performed
to provide further insight into this matter.} 

\subsection{A realistic scenario}\label{subs:realcen}

\change{In this subsection, we evaluate the performance of block
coordinate descent methods in real-world problems. For this purpose, we use
the set of $49$ test instances introduced in~\cite[Tables 2 and 3]{LSS19}.
These tables provide detailed information on each problem, including its name,
source, matrix dimensions, target objective function value, and the number of
zero coordinates in the optimal solution. The regularization parameter is
chosen as $\lambda=0.1\|A^Tb\|_{\infty}$, where $A$ and $b$ denote the matrix
and vector that define each problem, respectively. Because these problems are
derived from real applications and contain actual data, they present great
structural diversity, making them particularly valuable for analysis.}

\change{A limitation of this test set is that most problems have a
relatively small number of samples and/or variables compared to those
analyzed in the previous subsection. Another distinguishing characteristic is
the presence of two problem classes: those in which the matrix $A$ has more
columns than rows, called $SC$ problems, and those in which there are more
rows than columns, called $SR$ problems. In the context of Lasso, where the
primary objective is the selection of variables or characteristics, this
distinction is particularly relevant. In the $SC$ case, the problem includes
many potentially redundant or insignificant features to be eliminated. In
contrast, the $SR$ case involves fewer variables from the outset, where a more
refined selection is required. These differences were already important
in~\cite{LSS19}, which reported that the behavior of the block coordinate
descent method with identification differs notably between these two problem
classes. Therefore, we present the results for each class separately to
highlight the differences, which also impact the performance of the
parallel implementations.}

\change{We begin by analyzing the $SC$-type problems. The parameter
$\delta_{DP}$ is calibrated using the same range of values as considered in
Subsection~\ref{subs:ne}. The comparison is once more conducted
using performance profiles now based on the
average execution time among 100 independent runs of each problem
instance for all \texttt{PA} method variants, on the set of $25$ $SC$-type
problems. These results are depicted in Figures~\ref{fig:pa-sc-all1}
and~\ref{fig:pa-sc-all2}. From the performance profiles, it is evident that
the \texttt{PA-10} variant achieves the best overall performance among the
tested configurations.}

\begin{figure}
    \begin{subfigure}[h]{0.5\linewidth}
        \includegraphics[width=\linewidth]{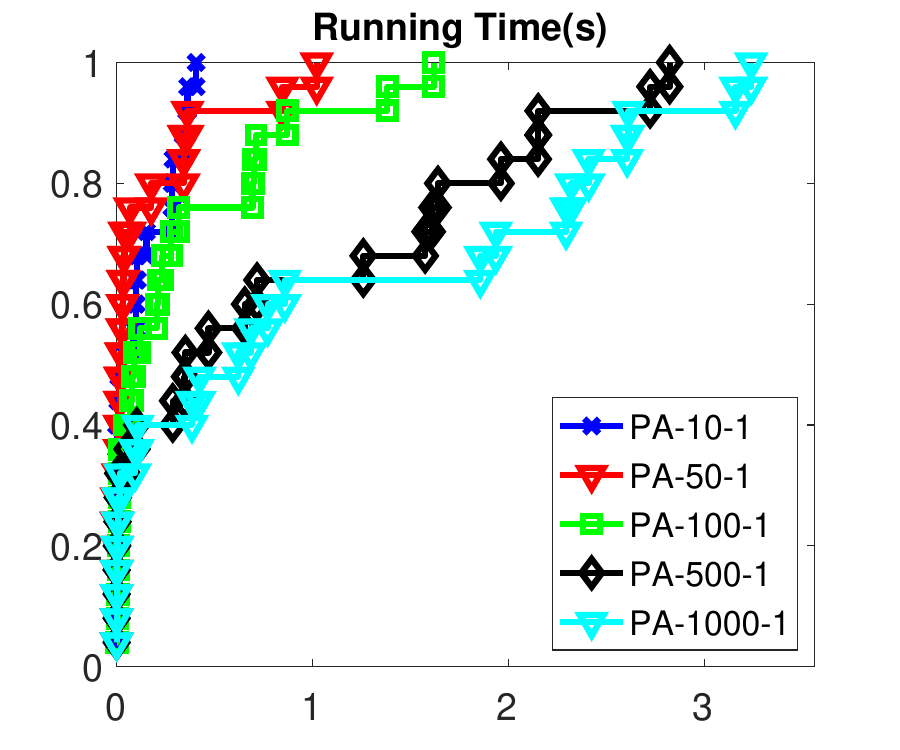}
        \caption{$1$ thread.}
    \end{subfigure}
    \hfill
    \begin{subfigure}[h]{0.5\linewidth}
        \includegraphics[width=\linewidth]{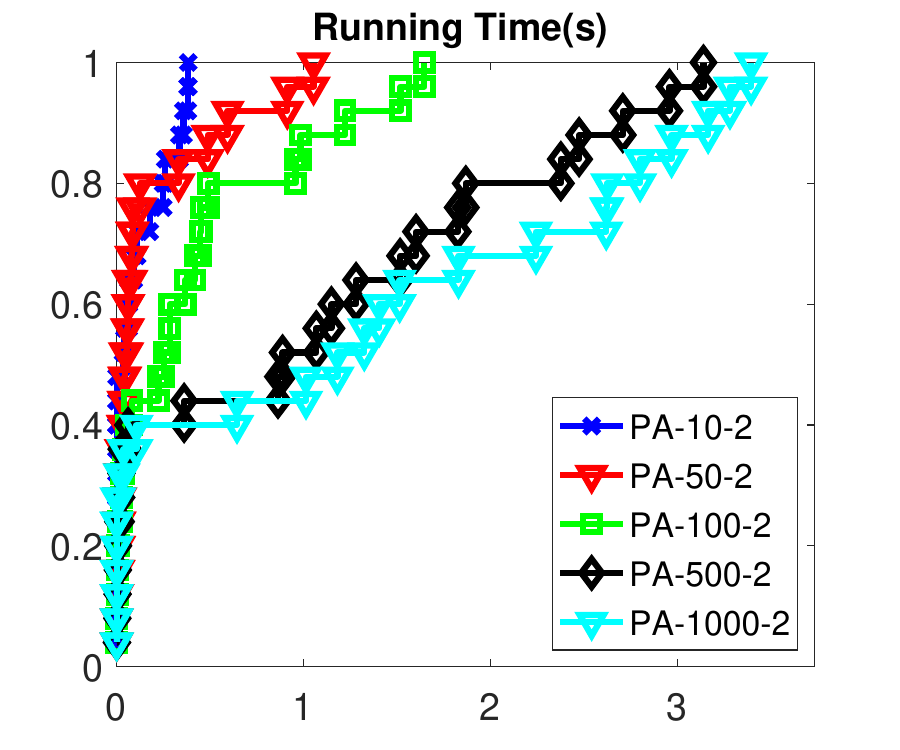}
        \caption{$2$ threads.}
    \end{subfigure}\caption{Performance profile of the average execution time between the variants of the Parallel Active BCDM method
    (\texttt{PA}) with $1$ and $2$ threads for the $25$ $SC$ problems.}
    \label{fig:pa-sc-all1}
\end{figure}

\begin{figure}
    \begin{subfigure}[h]{0.33\linewidth}
        \includegraphics[width=\linewidth]{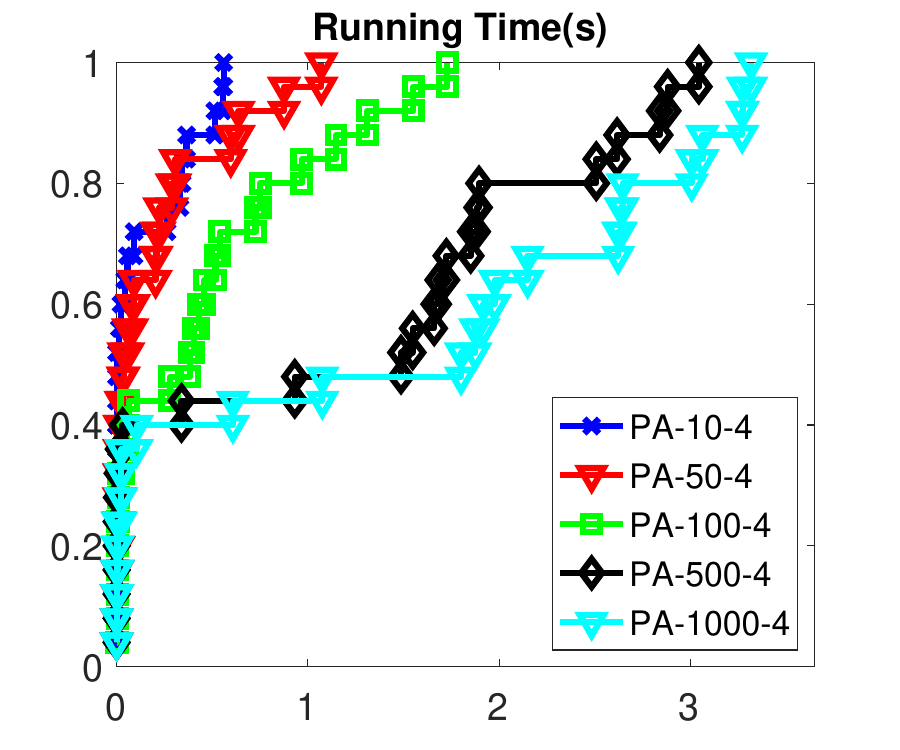}
        \caption{$4$ threads.}
    \end{subfigure}
    \hfill
    \begin{subfigure}[h]{0.33\linewidth}
        \includegraphics[width=\linewidth]{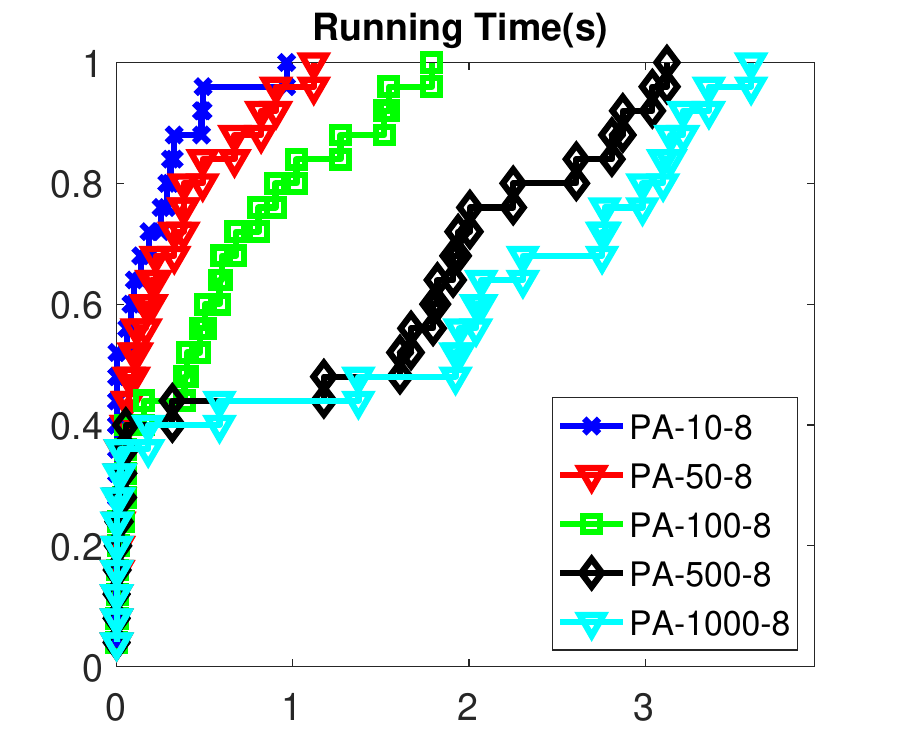}
        \caption{$8$ threads.}
    \end{subfigure}\begin{subfigure}[h]{0.33\linewidth}
        \includegraphics[width=\linewidth]{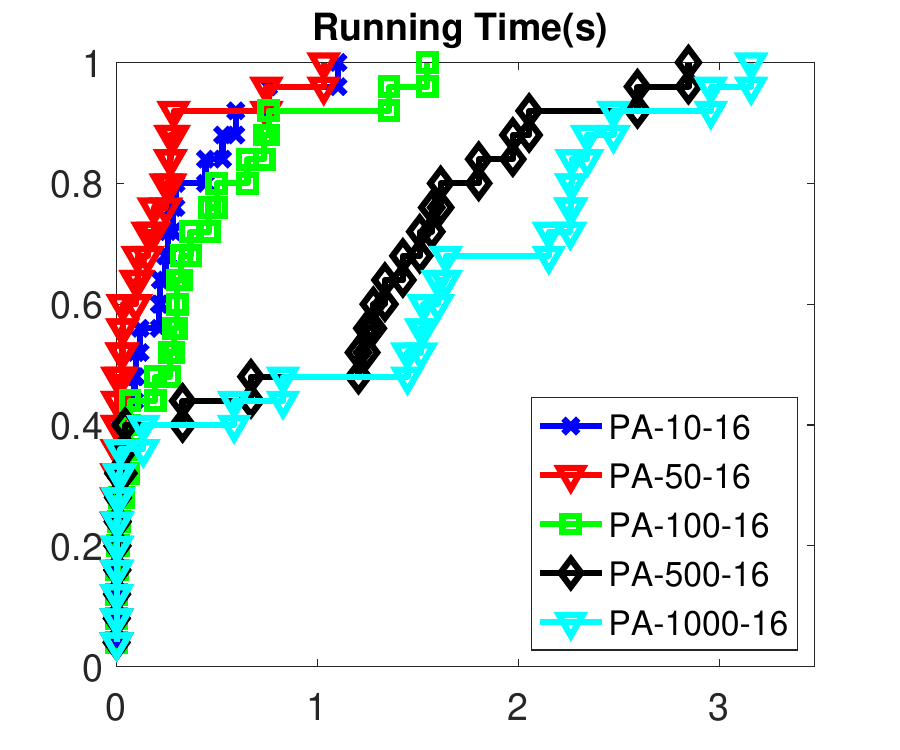}
        \caption{$16$ threads.}
    \end{subfigure}\caption{Performance profile of the average execution time between the variants of the Parallel Active BCDM method
    (\texttt{PA}) with $4$, $8$, and $16$ threads for the $25$ $SC$
    problems.}
    \label{fig:pa-sc-all2}
\end{figure}

\change{Figure~\ref{fig:rat-sc} presents three boxplots, one for each
method \texttt{PCDM}, \texttt{UBCDM} and \texttt{PA-10}, illustrating
the execution time speedup achieved by each method relative to its
corresponding single-threaded variant. Although the observed speedups are less
pronounced than those reported for the $Ne$ problems, the
performance of all methods improves consistently with increasing
number of threads. Notably, the \texttt{PCDM} method exhibits relatively
modest gains when using $2$ and $4$ threads. With $8$ and $16$ threads, its
speedup becomes more significant and surpasses the performance of the other
two methods under these configurations.}

\begin{figure}
    \begin{subfigure}[h]{0.33\linewidth}
        \includegraphics[width=\linewidth]{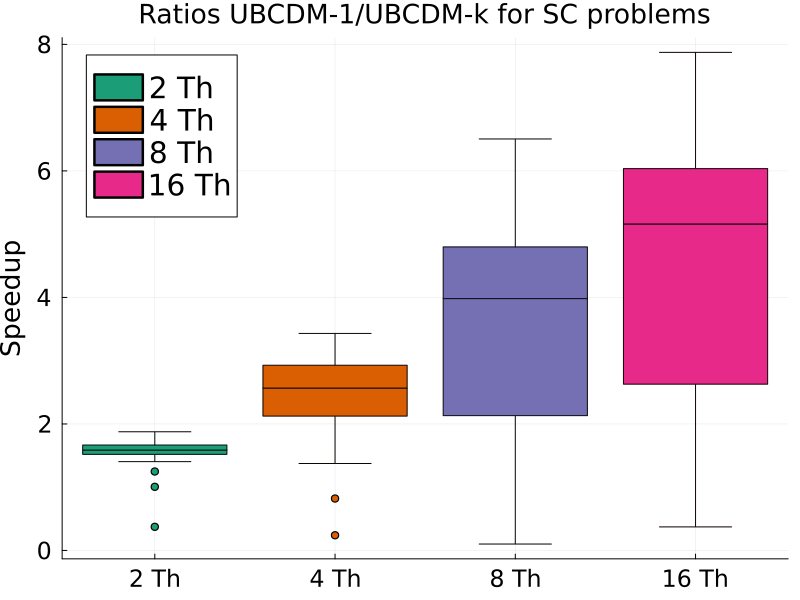}
\end{subfigure}
    \hfill
    \begin{subfigure}[h]{0.33\linewidth}
        \includegraphics[width=\linewidth]{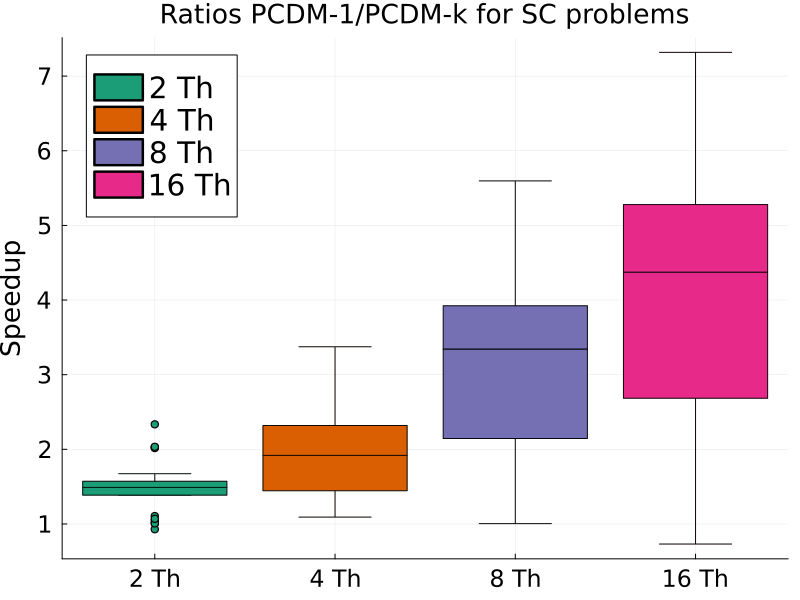}
\end{subfigure}\begin{subfigure}[h]{0.33\linewidth}
        \includegraphics[width=\linewidth]{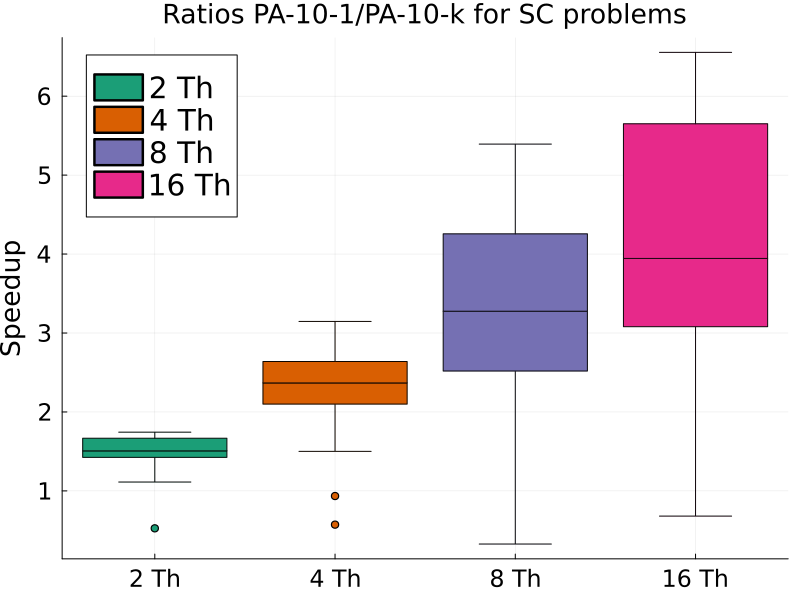}
\end{subfigure}\caption{Boxplots comparing the speedup in running time between
    \texttt{UBCDM-}$1$/\texttt{UBCDM-}$k$ (left),
    \texttt{PCDM-}$1$/\texttt{PCDM-}$k$ (center) and
    \texttt{PA-10-}$1$/\texttt{PA-10-}$k$ (right), with $k\in \{ 2, 4, 8,
    16\}$ for $SC$ problems.}
    \label{fig:rat-sc}
\end{figure}

\change{We conclude the comparison of the $SC$  problems with the
boxplots shown in Figure~\ref{fig:rat-pbub-sc1}, which depict the execution
time performance for the evaluated methods. Specifically, the figure contrasts
the performance of \texttt{PCDM} against \texttt{UBCDM}, as well as
\texttt{UBCDM} against \texttt{PA-10}. Again, \texttt{UBCDM} method achieves
an average speedup of approximately $2$ relative to \texttt{PCDM}. In turn,
\texttt{PA-10} attains an average speedup of nearly $3.4$ when compared to
\texttt{UBCDM}.}

\begin{figure}[h]
    \begin{subfigure}[h]{0.50\linewidth}
        \includegraphics[width=\linewidth]{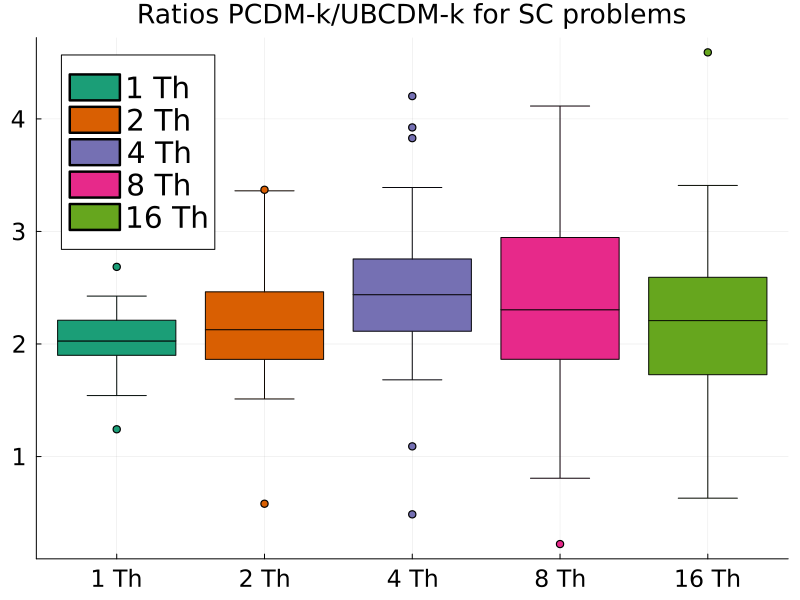}
\end{subfigure}\begin{subfigure}[h]{0.50\linewidth}
    \includegraphics[width=\linewidth]{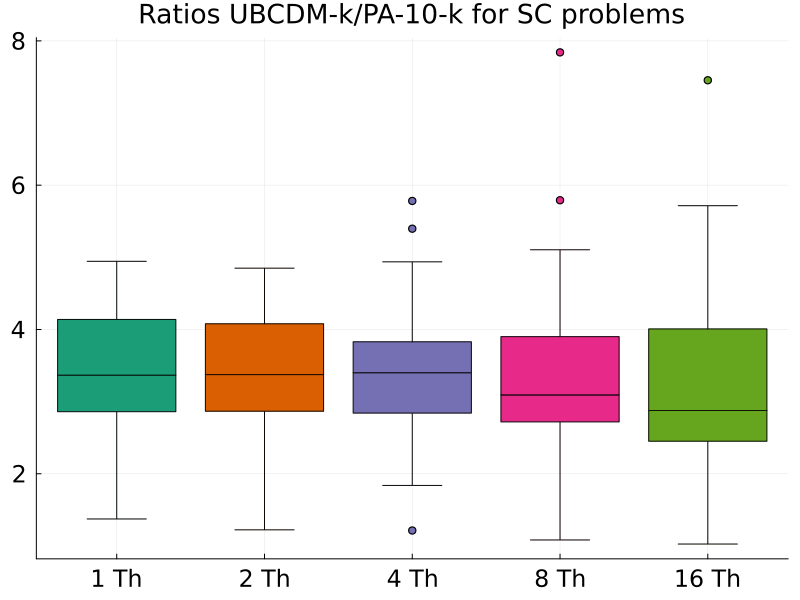}
\end{subfigure}\caption{Boxplot comparing the running time ratio between \texttt{PCDM} and \texttt{UBCDM} (left) and between \texttt{UBCDM} and \texttt{PA-10} (right) for $SC$ problems, with $1$, $2$, $4$, $8$ and $16$ threads.}
     \label{fig:rat-pbub-sc1}
\end{figure}

\change{We now turn to the analysis of the experimental results for
the problems of $SR$-type. The parameter $\delta_{DP}$ is adjusted
using the same range of values employed previously. Performance profiles are
used to compare the average execution times of all \texttt{PA} method variants
for the $24$ $SR$-type problems, as illustrated in
Figures~\ref{fig:pa-sr-all1} and~\ref{fig:pa-sr-all2}. Similarly to the $SC$
case, the most promising configuration corresponds to $\delta_{DP}=10$, which
delivers the best overall performance.}

\begin{figure}
    \begin{subfigure}[h]{0.5\linewidth}
        \includegraphics[width=\linewidth]{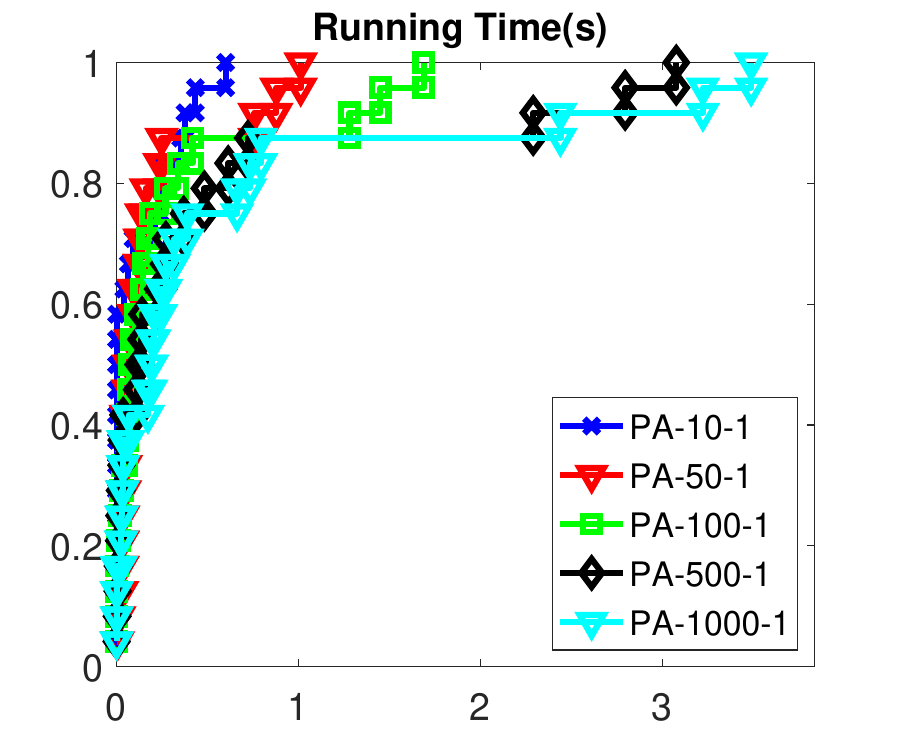}
        \caption{$1$ thread.}
    \end{subfigure}
    \hfill
    \begin{subfigure}[h]{0.5\linewidth}
        \includegraphics[width=\linewidth]{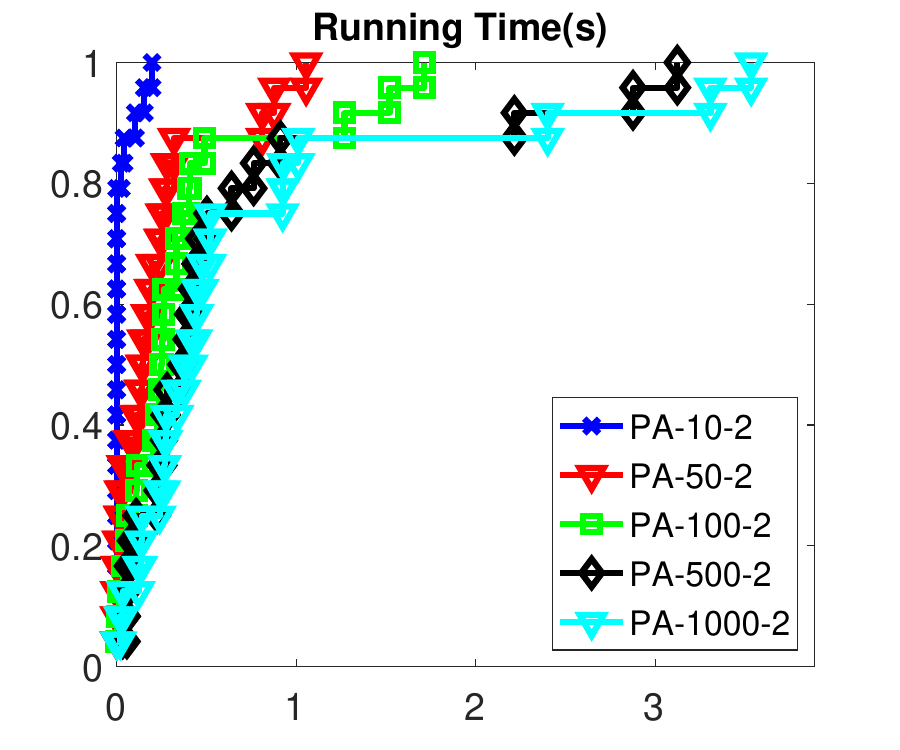}
        \caption{$2$ threads.}
    \end{subfigure}\caption{Performance profile of the average execution time between the variants of the Parallel Active BCDM method
    (\texttt{PA}) with $1$ and $2$ threads for the $24$ $SR$ problems.}
    \label{fig:pa-sr-all1}
\end{figure}

\begin{figure}
    \begin{subfigure}[h]{0.33\linewidth}
        \includegraphics[width=\linewidth]{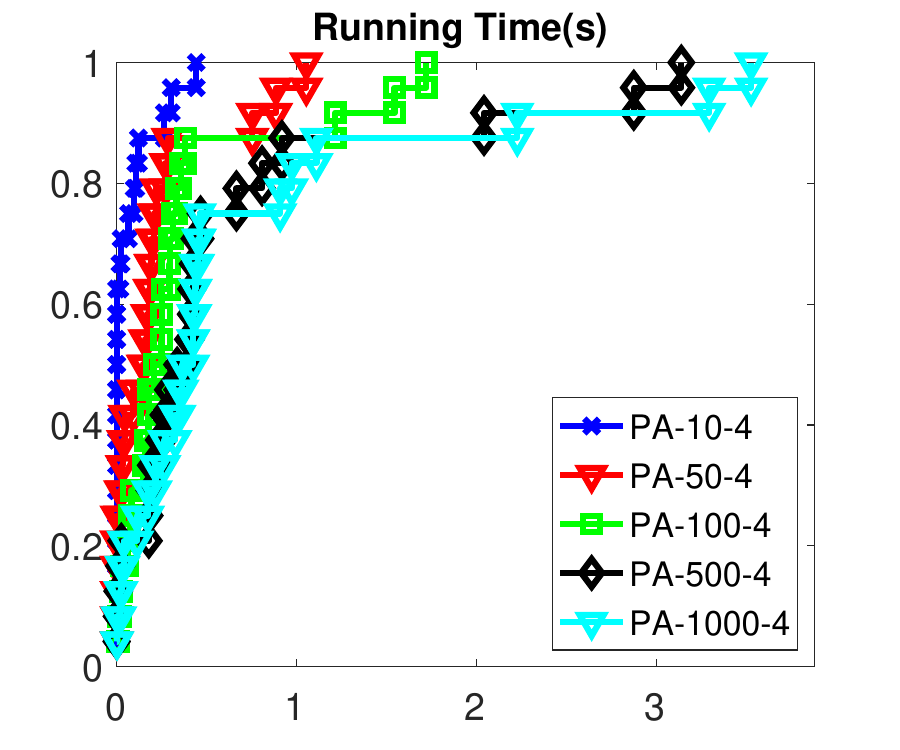}
        \caption{$4$ threads.}
    \end{subfigure}
    \hfill
    \begin{subfigure}[h]{0.33\linewidth}
        \includegraphics[width=\linewidth]{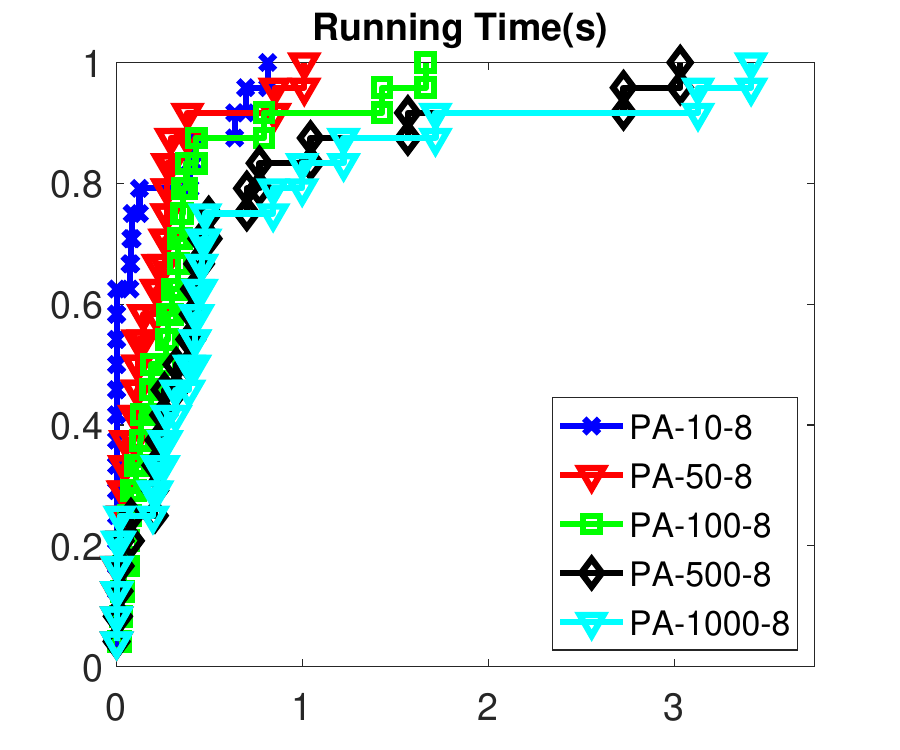}
        \caption{$8$ threads.}
    \end{subfigure}\begin{subfigure}[h]{0.33\linewidth}
        \includegraphics[width=\linewidth]{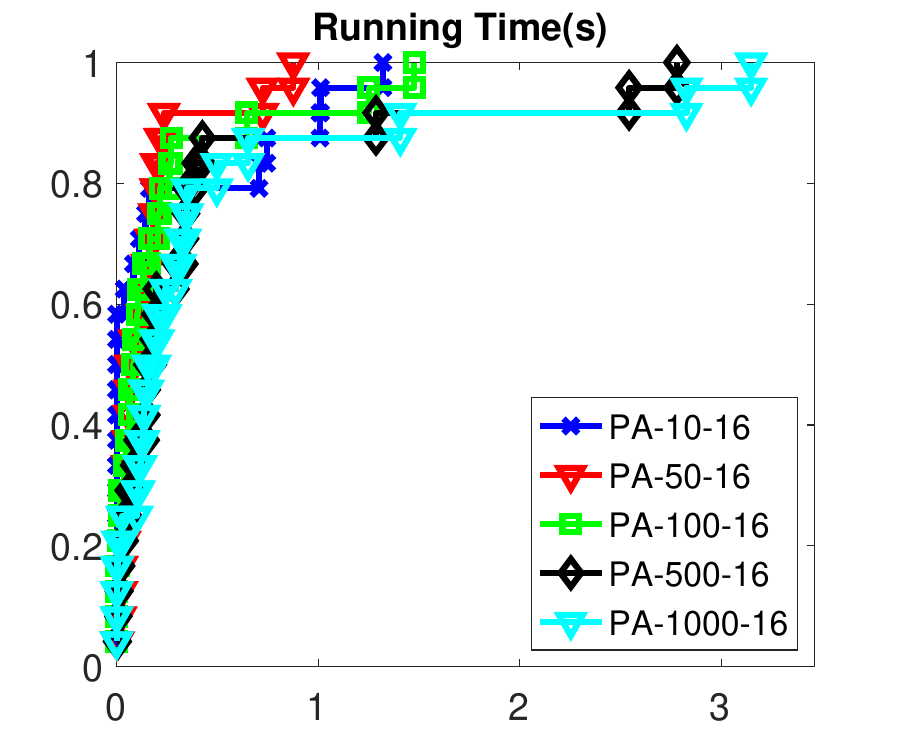}
        \caption{$16$ threads.}
    \end{subfigure}\caption{Performance profile of the average execution time between the variants of the Parallel Active BCDM method
    (\texttt{PA}) with $4$, $8$, and $16$ threads for the $24$ $SR$
    problems.}
    \label{fig:pa-sr-all2}
\end{figure}

\change{ Figure~\ref{fig:rat-sr} presents three boxplots showing
the speedup of the multi-threaded variants when compared to its serial
version. Unlike the previous cases, the multi-threaded variants did not
outperform their serial counterparts in the majority of instances in this test
set. This outcome is attributed to the higher $\tfrac{\omega}{n}$ ratio
observed in this class of problems, which leads to increased values of
$\beta$, thereby penalizing the magnitude of the descent directions when
compared to the serial case. In several instances, the number of iterations
required by the multi-threaded versions increased substantially compared to
the serial versions, resulting in a significant reduction in the effective
speedup. Please refer to the tables in the appendix for further details.}

\change{An analysis of the average speedups reveals that the  \texttt{UBCDM} method achieved speedups of $1.12$, $1.38$, $1.61$, and $1.61$ for $2$, $4$, $8$, and $16$ threads, respectively. The \texttt{PCDM} method reached $1.19$, $1.49$, $2.00$, and $2.50$, while \texttt{PA-10} achieved $1.03$, $1.25$, $1.39$, and $1.44$ for the same thread counts.}

\begin{figure}
    \begin{subfigure}[h]{0.33\linewidth}
        \includegraphics[width=\linewidth]{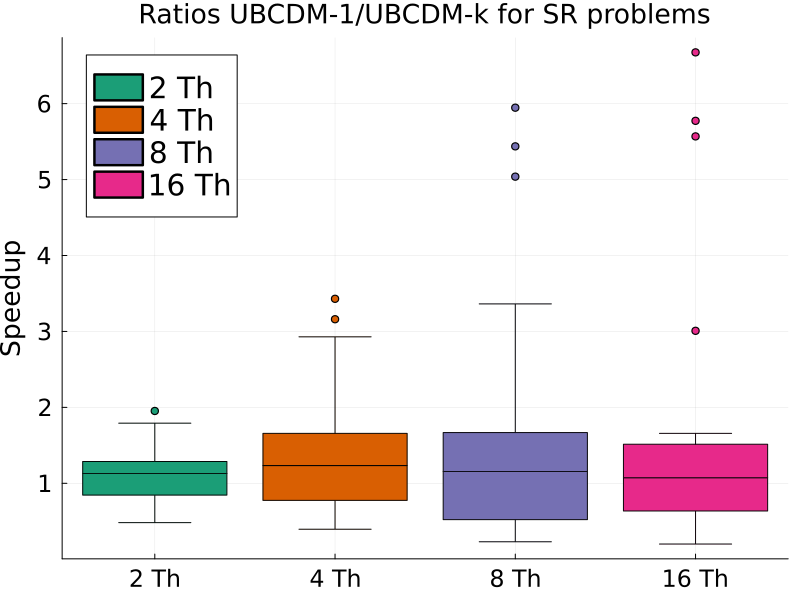}
\end{subfigure}
    \hfill
    \begin{subfigure}[h]{0.33\linewidth}
        \includegraphics[width=\linewidth]{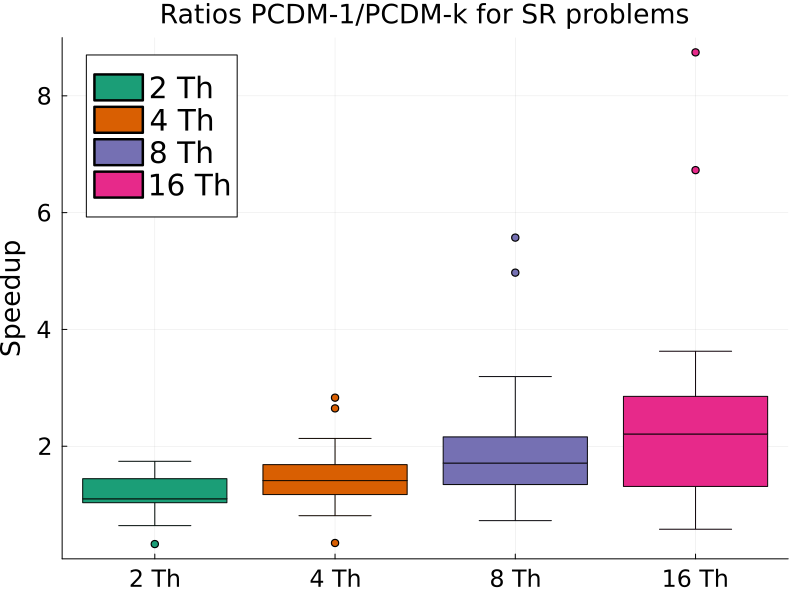}
\end{subfigure}\begin{subfigure}[h]{0.33\linewidth}
        \includegraphics[width=\linewidth]{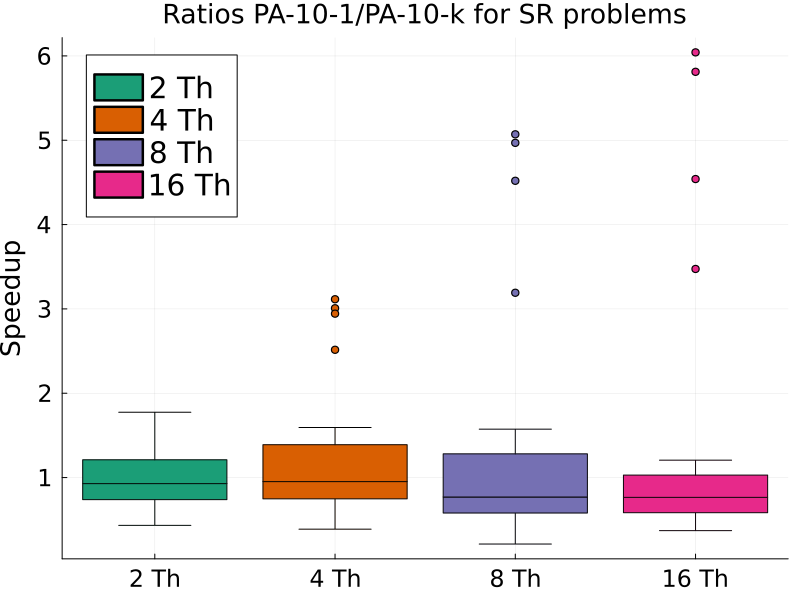}
\end{subfigure}\caption{Boxplots comparing the speedup in running time between
    \texttt{UBCDM-}$1$/\texttt{UBCDM-}$k$ (left),
    \texttt{PCDM-}$1$/\texttt{PCDM-}$k$ (center) and
    \texttt{PA-10-}$1$/\texttt{PA-10-}$k$ (right), with $k\in \{ 2, 4, 8,
    16\}$ for $SR$ problems.}
    \label{fig:rat-sr}
\end{figure}

\change{As with the $SC$ problems, Figure~\ref{fig:rat-sr}
presents boxplots comparing the execution time of the serial implementation of
each method with its corresponding multi-threaded version. In contrast to
previous cases, for the set of SR problems, the multi-threaded implementations
did not outperform their serial counterparts in the majority of instances.
Among the evaluated methods, \texttt{PCDM} demonstrated the most
notable speedup under multi-threaded execution.}

\begin{figure}[h]
    \begin{subfigure}[h]{0.50\linewidth}
        \includegraphics[width=\linewidth]{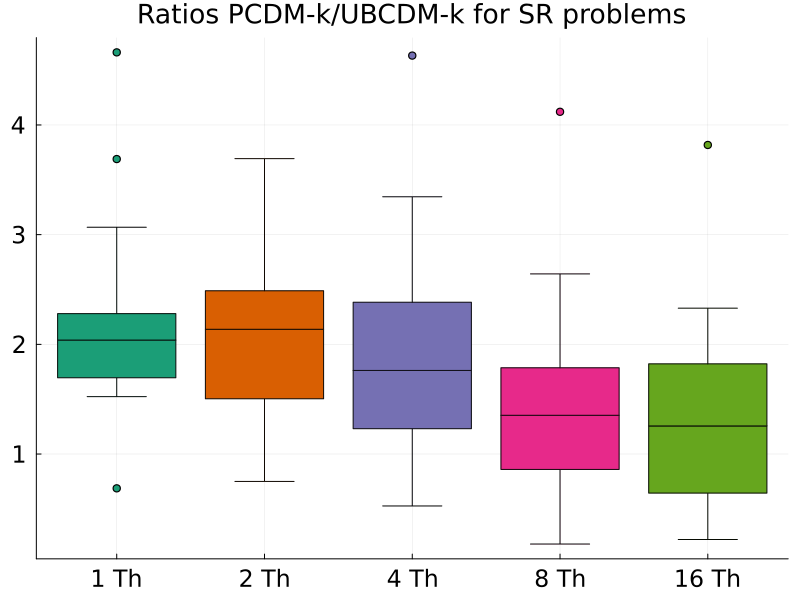}
\end{subfigure}\begin{subfigure}[h]{0.50\linewidth}
    \includegraphics[width=\linewidth]{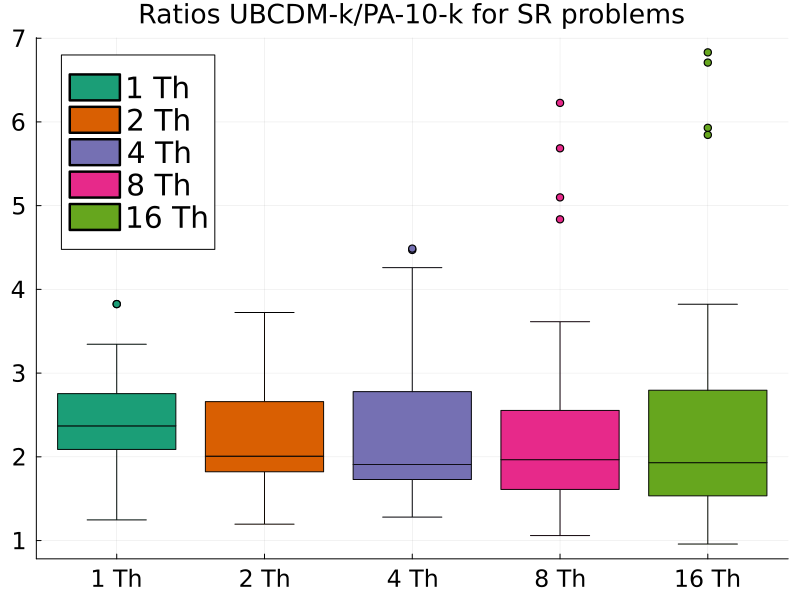}
\end{subfigure}\caption{Boxplot comparing the running time ratio between \texttt{PCDM}
    and \texttt{UBCDM} (left) and between \texttt{UBCDM} and \texttt{PA-10}
    (right) for $SR$ problems, with $1$, $2$, $4$, $8$ and $16$ threads.}
     \label{fig:rat-pbub-sr1}
\end{figure}

\change{To conclude this section of experiments, we present the final
two boxplots in Figure~\ref{fig:rat-pbub-sr1}. It illustrates the
execution time ratios between \texttt{UBCDM} and \texttt{PCDM}, as well as
between \texttt{UBCDM} and \texttt{PA-10}, for the set of problems $SR$. The
results indicate that \texttt{UBCDM} is, on average, approximately $1.5$ times
faster than \texttt{PCDM}, although this advantage slightly decreases as the
number of threads increases to 16. In contrast, the performance gap between
\texttt{PA-10} and \texttt{UBCDM} remains more stable, with \texttt{PA-10}
consistently exhibiting nearly $2.4$ times the demand of \texttt{UBCDM} for
all configurations of threads.}

\change{We recall that Sections~\ref{subs:sc}
and~\ref{subs:sr} of the appendix present tables with full
detailed information about the most relevant methods previously discussed for
problems $SC$ and $SR$.}

 \selectlanguage{english}

\section{Final remarks}
\label{sec:remarks}

In this paper, we \change{introduce} a parallel variation of the Active Block
Coordinate Descent Method from~\cite{LSS19} and \change{show} that its sequence of
objective values converges to the optimal value in expectation, \change{similar} to
the results of~\cite{RT2016} for the uniform case without identification. The
\change{convergence analysis} is accompanied by a high-performance implementation that is tested in
different scenarios based on Lasso problems.

In the synthetic test set presented in~\cite{RT2016}, our implementation
displays a consistent speedup as the number of threads increases. It can also
outperform the implementation from~\cite{RT2016} even in the uniform case,
with further acceleration whenever identification is \change{activated}. In real-world
tests, using the collection from~\cite{LSS19}, the parallel implementation is
still faster than the serial one, but the improvement is more limited than in
the synthetic case.  For problems with sparse matrices \change{that have} a very
unbalanced quantity of elements among the columns, the large amount of
information may burden the computational effort unevenly among the threads. In
case such columns take part \change{in} the problem solution, our identification
strategy should benefit less from the parallelism than \change{a} uniform choice
of blocks. Nevertheless, better speedups were achieved in problems where there
are many columns (features) to be selected. In this scenario, identification
has a favorable effect, \change{decreasing} the computational effort to approximate an
optimal solution.

A future direction of research is to use ideas akin to relative
smoothness~\cite{HR2021} or the smooth approximation framework
in~\cite{CN2024} to relax the differentiability assumptions on the smooth part
of the objective function in a setting that allows the use of identification
of the active constraints. 

\bigskip

\noindent {\bf Acknowledgments.} \change{We are thankful to the
    \change{anonymous} reviewers, whose insightful comments and questions helped us
    improve the presentation of our work. }

\coapstyle{
}
\simplestyle{
    \bibliographystyle{abbrv}
}
\bibliography{LSS_Pbib}

\selectlanguage{english}

\coapstyle{\color{red}}
\section{Appendix}
\label{sec:appendix}

\subsection{Tables - Datasets Ne}\label{subs:ne}

\coapstyle{\arrayrulecolor{red}}
\begin{table}[h!]
    \centering
    \change{
}
    \caption{\change{Average number of iterations of the method \texttt{PA-10} with multiple threads applied to SR* problems.}}
    \label{tab:sr-it-prob-pa}
\end{table}
\coapstyle{\color{black}} \end{document}